\newcommand\otm{\otimes}
\newcommand\str{\rightarrow}
\newcommand\mj{\mbox{\bf 1}}
\newcommand\id{\mbox{\bf 1}}
\newcommand\set{\mbox{\emph{Set}}}
\newcommand\md{\mu^\lozenge}
\newcommand\ed{\eta^\lozenge}
\newcommand\ms{\mu^\square}
\newcommand\es{\eta^\square}
\newcommand\mR{\mbox{$\mathbf{R}$}}
\newcommand\mZ{\mbox{$\mathbf{Z}$}}
\newcommand\Int{\mbox{\rm Int}}
 \newtheorem{thm}{Theorem}[section]
 \newtheorem{prop}[thm]{Proposition}
 \newtheorem{lem}[thm]{Lemma}
 \newtheorem{cor}[thm]{Corollary}
 \newtheorem{rem}[thm]{Remark}
\numberwithin{equation}{section}
\newcommand{\HDS}{\vrule width0pt height2.3ex depth1.05ex\displaystyle}
\newcommand{\f}[2]{{\frac{\HDS #1}{\HDS #2}}}
\begin{document}

\title{Spheres as Frobenius objects}
\author{Djordje Barali\' c, Zoran Petri\'c and Sonja Telebakovi\' c}
\affil{Mathematical Institute SANU,\\ Knez Mihailova 36, p.f.\
367,\\ 11001 Belgrade, Serbia

\vspace{1ex}

Faculty of Mathematics,
\\Studentski trg 16,\\ 11000 Belgrade, Serbia

\vspace{1ex}

\texttt{djbaralic@mi.sanu.ac.rs}, \texttt{zpetric@mi.sanu.ac.rs},
\texttt{sonjat@matf.bg.ac.rs }}

\date{}
\maketitle

\vspace{-3ex}

\begin{abstract}

Following the pattern of the Frobenius structure usually assigned
to the 1-dimensional sphere, we investigate the Frobenius
structures of spheres in all other dimensions. Starting from
dimension $d=1$, all the spheres are commutative Frobenius objects
in categories whose arrows are ${(d+1)}$-dimensional cobordisms.
With respect to the language of Frobenius objects, there is no
distinction between these spheres---they are all free of
additional equations formulated in this language. The
corresponding structure makes out of the 0-dimensional sphere not
a commutative but a symmetric Frobenius object. This sphere is
mapped to a matrix Frobenius algebra by a 1-dimensional
topological quantum field theory, which corresponds to the
representation of a class of diagrammatic algebras given by
Richard Brauer.

\end{abstract}

\vspace{.3cm}

\noindent {\small {\it Mathematics Subject Classification} ({\it
2000}): 18D35, 57R56}

\vspace{.5ex}

\noindent {\small {\it Keywords$\,$}: symmetric monoidal category,
commutative Frobenius object, oriented manifold, cobordism, normal
form, coherence, topological quantum field theory, Brauerian
representation}

\vspace{.5ex}

\noindent {\small {\it Acknowledgements$\,$}: We are grateful very
much to Joachim Kock for some useful comments concerning the
previous version of this paper and for pointing out to us the
reference \cite{S95}. We would like to thank the anonymous referee
for suggestions which helped to improve the paper. This work was
supported by projects 174020, 174026 and 174032 of the Ministry of
Education, Science, and Technological Development of the Republic
of Serbia. }

\section{Introduction}

A Frobenius structure of one dimensional sphere $S^1$ is
thoroughly investigated in a series of papers and books (see
\cite{D89}, \cite{A96}, \cite{K03} and references therein). It is
not the case that $S^1$ as a commutative Frobenius object of the
category of 2-cobordisms is dealt with separately, but always in
the context of two dimensional topological quantum field theories
and in connection with Frobenius algebras. A Frobenius structure
of spheres of other dimensions is investigated in \cite{D89}
and~\cite{S95}.

It is straightforward to conclude that for every $d\geq 1$, the
sphere $S^{d-1}$ is a symmetric Frobenius object in the category
$dCob$ of $d$-cobordisms. Also, it is straightforward to conclude
that for every $d\geq 2$, the sphere $S^{d-1}$ is a commutative
Frobenius object in this category. (The author of \cite{S95}
claims in Proposition~1 that every sphere is a commutative
Frobenius object, which is not true for the case of $S^0$.) This
means that increasing the dimension of a sphere from 0 to 1
produces a narrowing of the class of symmetric to the class of
commutative Frobenius objects. Hence, it is natural to ask the
following question: how many such steps are there, which produce
new classes of Frobenius objects, induced by increasing the
dimension of spheres?

The notion of commutative Frobenius object is not Post complete,
i.e.\ adding a new equality between the canonical arrows (those
relevant for the Frobenius structure) does not produce a
collapse---some canonical arrows with the same source and target
remain different. Hence, there are different classes of
commutative Frobenius objects. If for a pair of different closed
2-manifolds, one forms the corresponding equality of canonical
arrows, then all the commutative Frobenius objects satisfying this
new equality form a proper subclass of commutative Frobenius
objects. There are infinitely many such classes and
\cite[Proposition~2.4]{PT17} provides a way to classify all the
commutative Frobenius objects into classes corresponding to pairs
of closed 2-manifolds.

For example, the class of commutative Frobenius objects satisfying
the equality: comultiplication followed by multiplication equals
identity (a \emph{special} Frobenius algebra is such an object) is
a proper subclass of the commutative Frobenius objects satisfying
the equality: unit followed by comultiplication followed by
multiplication followed by counit equals unit followed by counit.
In terms of 2-manifolds, the latter class corresponds to the pair
consisting of the torus $S^1\times S^1$ and the sphere $S^2$.

The purpose of this paper is to show that no proper subclass of
commutative Frobenius objects includes $S^{d-1}$, for $d\geq 2$.
In order to do this, we construct a symmetric monoidal category
$K$ with a universal commutative Frobenius object, and show that
for every $d\geq 2$, every symmetric monoidal functor from $K$ to
$dCob$ that maps this object to $S^{d-1}$ is faithful.

The paper is organized so that some basic notions from category
theory, which are necessary for understanding the results, are
given in this introductory section. The category $dCobS$, whose
objects are finite collections of $(d\!-\!1)$-dimensional spheres
and arrows are equivalence classes of topological
$d$-co\-bor\-d\-isms, is introduced in Sections \ref{dCobS} and
\ref{orientation}. This category is an ambient for a Frobenius
object $S^{d-1}$. The category $dCobS$ is a full subcategory of
the category $dCob$ whose objects are the $(d\!-\!1)$-dimensional
closed topological manifolds.

In Section~\ref{why}, we justify our restriction of objects of the
category of $d$-cobordisms to collections of spheres. The results
of this section heavily depend on some topological facts that are
listed in Section~\ref{topological}. In Section~\ref{Frobenius},
the pattern followed by us is explained in order to define a
Frobenius structure of a sphere.

Section~\ref{Brauerian} is devoted to the case of $S^0$ and a
classical result of Richard Brauer concerning a matrix
representation of a class of diagrammatic algebras. This matrix
representation is generalized by Do\v sen and the second author
(see \cite{DP06} and \cite{DP12}) to cover a category and not just
a monoid of diagrams. This generalization is a one dimensional
topological quantum field theory that maps $S^0$ to a matrix
Frobenius algebra, which is usually the first example of a
Frobenius algebra one finds in the literature.

Section \ref{categoryK} serves to define a symmetric strict
monoidal category $K$ with a universal commutative Frobenius
object in it. This category is built out  of a syntax material.
Technical details of this construction are given in
Section~\ref{appendixK}. A normal form for arrows of this category
is given in Section~\ref{normal form}.

The main result of Section \ref{faithfulness} is that, for every
$d\geq 2$, the category $K$ is embeddable into $dCobS$. The image
of the universal Frobenius object through this embedding is the
sphere $S^{d-1}$. Such a result is a completeness result from the
point of view of a logician and a coherence result from the point
of view of a category theorist. It says that with respect to the
language of Frobenius objects there is no distinction between
spheres starting from dimension $d=1$, i.e.\ they are all free of
additional equations formulated in this language. This provides
the answer to the question from the second paragraph.

Almost all the categories we deal with in this paper are
\emph{skeletal} in the sense that there are no two different
isomorphic objects in them. Hence, all the monoidal categories
mentioned below will be strict monoidal. In this way we lose some
interesting combinatorics tied to associativity, but this enables
us to emphasize the combinatorial structure we investigate.

A \emph{strict monoidal} category is a triple
$(\mathcal{M},\otimes,e)$ consisting of a category $\mathcal{M}$,
a bifunctor $\otimes:\mathcal{M}\times \mathcal{M}\str
\mathcal{M}$, which is associative, and an object $e$, which is a
left and right unit for $\otimes$. It is \emph{symmetric} when
there is a natural transformation $\tau$ with components
\[
\tau_{A,B}:A\otimes B\str B\otimes A,
\]
which means that for every pair of arrows $f:A\str A'$ and
$g:B\str B'$ the diagram

\begin{center}
\begin{picture}(120,50)(0,-5)

\put(0,40){\makebox(0,0){$A\otimes B$}}
\put(0,0){\makebox(0,0){$A'\otimes B'$}}
\put(80,40){\makebox(0,0){$B\otimes A$}}
\put(80,0){\makebox(0,0){$B'\otimes A'$}}

\put(40,47){\makebox(0,0){$\tau_{A,B}$}}
\put(40,-7){\makebox(0,0){$\tau_{A',B'}$}}
\put(-15,20){\makebox(0,0){$f\otimes g$}}
\put(95,20){\makebox(0,0){$g\otimes f$}}

\put(20,40){\vector(1,0){40}} \put(20,0){\vector(1,0){40}}
\put(0,30){\vector(0,-1){20}} \put(80,30){\vector(0,-1){20}}
\end{picture}
\end{center}
commutes, this transformation is self-inverse, i.e.\
$\tau_{B,A}\circ\tau_{A,B}=\mj_{A\otimes B}$, and it satisfies
$\tau_{A\otimes B,C}=(\tau_{A,C}\otimes
\mj_B)\circ(\mj_A\otimes\tau_{B,C})$ (cf.\ the equations
(\ref{st}), (\ref{ct}), (\ref{fn}), (\ref{nt}), (\ref{iv}) and
(\ref{hx}) of Section~\ref{appendixK}). The main example of
symmetric strict monoidal categories in this paper are the
categories $dCobS$ and $dCob$ introduced in Section~\ref{dCobS}.

A \emph{monoid} $(M,\md:M\otimes M\str M,\ed:e\str M)$ in a strict
monoidal category $\mathcal{M}$ is a triple consisting of an
object $M$ of $\mathcal{M}$, and two arrows $\md$ and $\ed$ of
$\mathcal{M}$, such that the following diagrams commute

\begin{center}
\begin{picture}(250,60)(0,-5)

\put(0,40){\makebox(0,0){$M\otimes M\otimes M$}}
\put(0,0){\makebox(0,0){$M\otimes M$}}
\put(100,40){\makebox(0,0){$M\otimes M$}}
\put(100,0){\makebox(0,0){$M$}}

\put(53,47){\makebox(0,0){$\md\otimes\mj_M$}}
\put(50,-7){\makebox(0,0){$\md$}}
\put(-20,20){\makebox(0,0){$\mj_M\otimes\md$}}
\put(110,20){\makebox(0,0){$\md$}}

\put(30,40){\vector(1,0){50}} \put(20,0){\vector(1,0){60}}
\put(0,30){\vector(0,-1){20}} \put(100,30){\vector(0,-1){20}}

\put(150,40){\makebox(0,0){$M\otimes M$}}
\put(270,40){\makebox(0,0){$M\otimes M$}}
\put(210,40){\makebox(0,0){$M$}} \put(210,0){\makebox(0,0){$M$}}

\put(200,40){\vector(-1,0){30}} \put(220,40){\vector(1,0){30}}
\put(210,30){\vector(0,-1){20}} \put(160,30){\vector(2,-1){40}}
\put(260,30){\vector(-2,-1){40}}

\put(188,47){\makebox(0,0){$\ed\!\otimes\mj_M$}}
\put(235,47){\makebox(0,0){$\mj_M\otimes\! \ed$}}
\put(218,24){\makebox(0,0){$\mj_M$}}
\put(170,17){\makebox(0,0){$\md$}}
\put(250,17){\makebox(0,0){$\md$}}

\end{picture}
\end{center}
(cf.\ the equations (\ref{as}) and (\ref{un}) of
Section~\ref{appendixK}).

A \emph{comonoid} $(M,\ms:M\str M\otimes M,\es:M\str e)$ in
$\mathcal{M}$ is defined in a dual manner (cf.\ the equations
(\ref{ca}) and (\ref{cu}) of Section~\ref{appendixK}). A
\emph{Frobenius object} in $\mathcal{M}$ is a quintuple
\[
(M,\md:M\otimes M\str M,\ed:e\str M,\ms:M\str M\otimes M,\es:M\str
e)
\]
such that $(M,\md,\ed)$ is a monoid, $(M,\ms,\es)$ is a comonoid,
and the following \emph{Frobenius equations} (cf.\ the equations
(\ref{fb}) of Section~\ref{appendixK}) hold
\[
(\mj_M\otimes\md)\circ(\ms\otimes\mj_M)=\ms\circ\md=(\md\otimes\mj_M)\circ(\mj_M\otimes\ms).
\]

If $\mathcal{M}$ is symmetric, then a Frobenius object
$(M,\md,\ed,\ms,\es)$ is \emph{commutative} when
\[
\md\circ\tau_{M,M}=\md\quad \mbox{\rm and}\quad
\tau_{M,M}\circ\ms=\ms
\]
(cf.\ the equations (\ref{cm}) and (\ref{cocm}) of
Section~\ref{appendixK}, which are interderivable in the presence
of other equations), and it is \emph{symmetric} when
\[
\es\circ\md\circ\tau_{M,M}=\es\circ\md\quad \mbox{\rm and}\quad
\tau_{M,M}\circ\ms\circ\ed=\ms\circ\ed.
\]

A functor between two symmetric strict monoidal categories is
\emph{symmetric monoidal} when it preserves the symmetric monoidal
structure on the nose, i.e.\ it maps tensor to tensor, unit to
unit and symmetry to symmetry. According to our intention to work
with strict monoidal structures, by a $d$-dimensional
\emph{topological quantum field theory} ($d$TQFT) we mean here a
symmetric monoidal functor between the category $dCob$ and a
strict monoidal category equivalent to the category of finite
dimensional vector spaces over a chosen field. This
strictification is supported by \cite[Section~XI.3,
Theorem~1]{ML71}.

In some parts of the text, a natural number (finite ordinal) $n$
is considered to be the set $\{0,\ldots,n-1\}$. It will be clear
from the context when this is assumed.

\section{The category $dCobS$}\label{dCobS}

By a $d$-\emph{manifold} we mean here a compact, oriented
$d$-dimensional $\partial$-manifold (see
Section~\ref{orientation}). It is \emph{closed} when its boundary
is empty.

For $d\geq 1$ and $i\in \mathbf{N}$, let $S_i$ be the
$(d\!-\!1)$-dimensional sphere in $\mathbf{R}^d$ with the center
$(3i,0,\ldots,0)$ and the radius~1. Assume that an orientation of
$S_0$ is chosen, and that $S_i$ is oriented so that the
translation by the vector $(3i,0,\ldots,0)$ is an orientation
preserving homeomorphism from $S_0$ to $S_i$. Let $\underline{0}$
denote the empty set, and for $n>0$, let $\underline{n}$ denote
the closed $(d\!-\!1)$-manifold $S_0\cup\ldots\cup S_{n-1}$.

Let $M$ be a $d$-manifold such that its boundary $\partial M$ is a
disjoint union of $\Sigma_0$ homeomorphic to $\underline{n}$ and
$\Sigma_1$ homeomorphic to $\underline{m}$. We assume that the
orientations of  $\Sigma_0$ and $\Sigma_1$ are induced from the
orientation of $M$ (see Section~\ref{orientation}).

Let $f_0:\underline{n}\str M$ and $f_1:\underline{m}\str M$ be two
embeddings whose images are respectively $\Sigma_0$ and
$\Sigma_1$. Assume that $f_0$ preserves, while $f_1$ reverses the
orientation. The triple $(M,f_0,f_1)$ is a $d$-\emph{cobordism},
or simply a cobordism, from $\underline{n}$ to $\underline{m}$. We
call $\Sigma_0$ and $\Sigma_1$, respectively, the \emph{ingoing}
and \emph{outgoing} boundary of $M$ in this cobordism.

Two $d$-cobordisms $K=(M,f_0,f_1)$ and $K'=(M',f'_0,f'_1)$ are
\emph{equivalent}, which we denote by $K\sim K'$, when there is an
orientation preserving homeomorphism $F:M\str M'$ such that the
following diagram commutes.

\begin{center}
\begin{picture}(120,60)

\put(0,30){\makebox(0,0){$\underline{n}$}}
\put(60,55){\makebox(0,0){$M$}} \put(60,5){\makebox(0,0){$M'$}}
\put(120,30){\makebox(0,0){$\underline{m}$}}
\put(25,50){\makebox(0,0){$f_0$}}
\put(95,50){\makebox(0,0){$f_1$}}
\put(25,10){\makebox(0,0){$f'_0$}}
\put(95,10){\makebox(0,0){$f'_1$}} \put(67,30){\makebox(0,0){$F$}}

\put(10,35){\vector(2,1){40}} \put(10,25){\vector(2,-1){40}}
\put(110,35){\vector(-2,1){40}} \put(110,25){\vector(-2,-1){40}}
\put(60,45){\vector(0,-1){30}}
\end{picture}
\end{center}

The category $dCobS$ has
$\underline{0},\underline{1},\underline{2},\ldots$ as objects and
the equivalence classes of $d$-cobordisms as arrows. The
\emph{identity arrow} from $\underline{n}$ to $\underline{n}$ in
$dCobS$ is the equivalence class of the $d$-cobordism
\begin{center}
\begin{picture}(120,20)

\put(0,5){\makebox(0,0){$\underline{n}$}}
\put(120,5){\makebox(0,0){$\underline{n}$}}
\put(60,5){\makebox(0,0){$\underline{n}\times I$}}

\put(25,15){\makebox(0,0){$\langle\id,c_0\rangle$}}
\put(95,15){\makebox(0,0){$\langle\id,c_1\rangle$}}

\put(10,5){\vector(1,0){35}} \put(110,5){\vector(-1,0){35}}

\end{picture}
\end{center}
where $I$ is the unit interval $[0,1]$, $\id$ is the identity map
on $\underline{n}$, $c_0,c_1:\underline{n}\str I$ are the constant
maps $c_0(x)=0$ and $c_1(x)=1$, and for $f:C\str A$, and $g:C\str
B$, the pairing $\langle f,g\rangle:C\str A\times B$ is defined by
$\langle f,g\rangle(c)=(f(c),g(c))$.

\emph{Composition} of cobordisms $(M,f_0,f_1):\underline{n}\str
\underline{m}$ and $(N,g_0,g_1):\underline{m}\str \underline{k}$
consists of the $d$-manifold $N+_{g_0,f_1}M$ obtained by gluing
(see Section \ref{orientation}) and two maps $j\circ f_0$ and
$i\circ g_1$, where $i:N\str N+_{g_0,f_1}M$ and $j:M\str
N+_{g_0,f_1}M$ are the embeddings in the corresponding pushout
diagram (see Section \ref{orientation}). Equivalence of cobordisms
is a congruence with respect to the composition.

When $d=2$, the category $dCobS$ is isomorphic to the category
\textbf{2-Cobord} of \cite[Section~4]{A96}. The category $dCobS$
is strict monoidal with respect to the sum on objects
($\underline{n}+\underline{m}=\underline{n+m}$) and the following
operation of ``putting side by side'' on arrows. First, for two
$d$-manifolds $N$ and $M$, we denote by $N+M$ the disjoint union
$(N\times\{0\})\cup (M\times\{1\})$, and for two functions
$f:\underline{n}\str N$ and $g:\underline{m}\str M$, we denote by
$f+g:\underline{n+m}\str N+M$ the following function
\[
(f+g)(x)=\left\{ \begin{array}{ll} (f(x),0), & x\in \underline{n} \\
(g(x-(3n,0,\ldots,0)),1), & x\not\in
\underline{n}.\end{array}\right.
\]
Then, the ``putting side by side'' of $(N,f_0,f_1)$ and
$(M,g_0,g_1)$ is the $d$-cobordism
\[
(N+M,f_0+g_0,f_1+g_1).
\]

The category $dCobS$ is also symmetric monoidal with respect to
the family of $d$-cobordisms $\underline{\tau_{n,m}}$, defined as
\begin{center}
\begin{picture}(200,20)

\put(0,5){\makebox(0,0){$\underline{n}+\underline{m}$}}
\put(200,5){\makebox(0,0){$\underline{m}+\underline{n}$,}}
\put(100,5){\makebox(0,0){$(\underline{n+m})\times I$}}

\put(35,15){\makebox(0,0){$\langle\id,c_0\rangle$}}
\put(165,15){\makebox(0,0){$\langle f,c_1\rangle$}}

\put(20,5){\vector(1,0){35}} \put(180,5){\vector(-1,0){35}}

\end{picture}
\end{center}
where $f:\underline{n+m}\str \underline{n+m}$ translates the
spheres $S_i$, $0\leq i\leq n-1$, by the vector $(3m,0,\ldots,0)$,
and the spheres $S_j$, $n\leq j\leq n+m-1$, by the vector
$(-3n,0,\ldots,0)$.

The category $dCobS$ is skeletal, i.e.\ there are no two different
isomorphic objects in $dCobS$. This is shown below (see
Section~\ref{Brauerian} and Corollary~\ref{skeletal}). It is a
full subcategory of the category $dCob$, whose objects are all
closed $(d\!-\!1)$-manifolds, and whose arrows are based on
arbitrary $d$-manifolds, and not only on those with boundaries
homeomorphic to collections of spheres. The symmetric monoidal
structure of the category $dCob$ is defined as for $dCobS$.

\section{Why spheres?}\label{why}

In this section we explain why we work in $dCobS$ and not in
$dCob$, and why we deal with topological and not with smooth
manifolds. The main reason is that dealing with arrows of $dCobS$
is simplified to a certain extent by ``irrelevance'' of gluing.
Section~\ref{topological} serves to prepare the ground for the
results of this section. The ambient consisting of collections of
spheres is sufficient for our purposes, since we investigate
spheres as Frobenius objects.

\begin{lem}\label{lems1}
If $f:\underline{1}\str \underline{1}$ is an orientation
preserving homeomorphism, then the cobordisms
$(\underline{1}\times
I,\langle\id,c_0\rangle,\langle\id,c_1\rangle)$ and
$(\underline{1}\times I,\langle\id,c_0\rangle,\langle
f,c_1\rangle)$ are equivalent.
\end{lem}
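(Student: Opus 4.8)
The plan is to exhibit an explicit orientation preserving homeomorphism $F:\underline{1}\times I\str\underline{1}\times I$ that witnesses the equivalence of the two cobordisms. Recall that equivalence requires a single $F$ making the two triangles commute, i.e.\ $F\circ\langle\id,c_0\rangle=\langle\id,c_0\rangle$ (so $F$ fixes the ingoing boundary pointwise) and $F\circ\langle\id,c_1\rangle=\langle f,c_1\rangle$ (so $F$ acts on the outgoing boundary as $f$). The idea is to use the standard ``mapping torus''-style trick: interpolate between the identity at the level $t=0$ and the map $f$ at the level $t=1$. Since both cobordisms share the same underlying manifold $\underline{1}\times I$, I only need to move the product collar appropriately.

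First I would observe that $\underline{1}$ is a single $(d\mn1)$-sphere $S_0$, so $f:S_0\str S_0$ is an orientation preserving self-homeomorphism. The key topological input is that the orientation preserving homeomorphism group of the sphere is \emph{path connected to the identity} in the relevant sense, or more precisely that $f$ is \emph{isotopic to the identity}: there is a continuous family $\{h_t\}_{t\in I}$ of homeomorphisms of $S_0$ with $h_0=\id$ and $h_1=f$. Granting such an isotopy, I would then define
\[
F(x,t)=(h_t(x),t),
\]
and check the three things demanded of $F$: that it is a well-defined homeomorphism (its inverse is $(y,t)\mapsto(h_t^{-1}(y),t)$, continuous because inversion and evaluation of an isotopy are continuous), that it preserves orientation (each $h_t$ preserves the orientation of $S_0$ and $F$ is level-preserving in the $I$ direction, so on the product orientation it is orientation preserving), and that it satisfies the two boundary conditions, which follow at once since $h_0=\id$ and $h_1=f$.

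The main obstacle, and the step I would single out, is justifying that every orientation preserving self-homeomorphism $f$ of the sphere $S_0=S^{d-1}$ is isotopic to the identity. This is exactly the kind of topological fact that the paper defers to its Section~\ref{topological} (``The results of this section heavily depend on some topological facts that are listed in Section~\ref{topological}''), so I expect the clean way to present the proof is to reduce the lemma to such an isotopy statement and cite it, rather than to prove the isotopy from scratch. In the topological category this is delicate: for large $d$ one cannot simply quote a smooth Alexander-trick argument, and the honest statement is that orientation preserving homeomorphisms of $S^{d-1}$ are isotopic to the identity. I would therefore structure the writeup so that the construction $F(x,t)=(h_t(x),t)$ does all the categorical work, and the single topological lemma about isotoping $f$ to $\id$ is quoted as the hypothesis-discharging ingredient; this keeps the verification of the commuting diagram and of orientation preservation entirely elementary.
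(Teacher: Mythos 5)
Your proposal is correct and takes essentially the same route as the paper: the paper's proof simply invokes its Proposition~\ref{prop9}, which supplies exactly your level-preserving homeomorphism $F(x,t)=(\Phi_t(x),t)$ built from an isotopy $\Phi_t$ between the identity and $f$, the isotopy itself being Proposition~\ref{prop5}, deduced from the stable homeomorphism theorem (Kirby et al.) and Alexander's theorem in the topological category, just as you anticipated. The only cosmetic difference is in checking that $F^{-1}$ is continuous: the paper extends $F$ to $S^{d-1}\times\mathbf{R}$ and applies Invariance of Domain, whereas you appeal to continuity of inversion and evaluation for the homeomorphism group (which holds here, and compactness of $\underline{1}\times I$ would settle it even more directly).
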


\begin{proof}
Let $F:\underline{1}\times I\str \underline{1}\times I$ be the
homeomorphism from Proposition~\ref{prop9} such that
$F(x,0)=(x,0)$ and $F(x,1)=(f(x),1)$. Then $F$ makes the following
diagram commutative.

\begin{center}
\begin{picture}(120,60)

\put(0,30){\makebox(0,0){$\underline{1}$}}
\put(65,55){\makebox(0,0){$\underline{1}\times I$}}
\put(65,5){\makebox(0,0){$\underline{1}\times I$}}
\put(130,30){\makebox(0,0){$\underline{1}$}}
\put(15,50){\makebox(0,0){$\langle\id,c_0\rangle$}}
\put(115,50){\makebox(0,0){$\langle\id,c_1\rangle$}}
\put(15,10){\makebox(0,0){$\langle\id,c_0\rangle$}}
\put(115,10){\makebox(0,0){$\langle f,c_1\rangle$}}
\put(72,30){\makebox(0,0){$F$}}

\put(10,35){\vector(2,1){40}} \put(10,25){\vector(2,-1){40}}
\put(120,35){\vector(-2,1){40}} \put(120,25){\vector(-2,-1){40}}
\put(65,45){\vector(0,-1){30}}
\end{picture}
\end{center}
\end{proof}

\begin{lem}\label{lems2}
If $u,v:\underline{1}\str \Sigma$ are two orientation preserving
homeomorphisms, then the cobordisms $K_1=(\Sigma\times I,\langle
v,c_0\rangle,\langle v,c_1\rangle)$, $K_2=(\Sigma\times I,\langle
v,c_0\rangle,\langle u,c_1\rangle)$ and $(\underline{1}\times
I,\langle\id,c_0\rangle,\langle\id,c_1\rangle)$ are equivalent.
\end{lem}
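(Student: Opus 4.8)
The plan is to establish the two equivalences $K_1 \sim (\underline{1}\times I,\langle\id,c_0\rangle,\langle\id,c_1\rangle)$ and $K_2 \sim (\underline{1}\times I,\langle\id,c_0\rangle,\langle\id,c_1\rangle)$ separately, and then conclude that $K_1 \sim K_2$ by transitivity and symmetry of the equivalence relation $\sim$. Since the identity cobordism on $\underline{1}$ is the common target, it suffices to exhibit, for each of $K_1$ and $K_2$, an orientation preserving homeomorphism making the relevant square (as in the definition of $\sim$) commute.

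For the first equivalence, I would take the homeomorphism $v\times\id_I : \underline{1}\times I \str \Sigma\times I$, which is orientation preserving because $v$ is. To check the diagram commutes, observe that $(v\times\id_I)\circ\langle\id,c_0\rangle(x) = (v\times\id_I)(x,0) = (v(x),0) = \langle v,c_0\rangle(x)$, and similarly $(v\times\id_I)\circ\langle\id,c_1\rangle = \langle v,c_1\rangle$. This matches the ingoing and outgoing embeddings of $K_1$ exactly, so $K_1$ is equivalent to the identity cobordism. The key point is simply that conjugating the cylinder by $v$ transports the constant-$v$ boundary data back to the constant-identity data.

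For the second equivalence, the obstacle is that $K_2$ has mismatched boundary embeddings $\langle v,c_0\rangle$ and $\langle u,c_1\rangle$, so the single map $v\times\id_I$ will not align the outgoing face: it sends $\langle\id,c_1\rangle$ to $\langle v,c_1\rangle$, not to $\langle u,c_1\rangle$. To repair the top face I would first reduce, via $v\times\id_I$ as above, to the cobordism on $\underline{1}\times I$ with outgoing embedding $\langle f,c_1\rangle$, where $f = v^{-1}\circ u : \underline{1}\str\underline{1}$ is an orientation preserving homeomorphism (a composite of two orientation preserving maps). This is precisely the setting of Lemma~\ref{lems1}, which guarantees that $(\underline{1}\times I,\langle\id,c_0\rangle,\langle f,c_1\rangle)$ is equivalent to the identity cobordism.

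The main conceptual content therefore lives in Lemma~\ref{lems1}, and by extension in Proposition~\ref{prop9}, which supplies a homeomorphism of the cylinder restricting to $\id$ on the bottom and to a prescribed orientation preserving $f$ on the top; the present lemma is essentially a packaging of that fact together with the change-of-coordinates $v\times\id_I$. The hard part is not any computation but organizing the two reductions so that the composite homeomorphisms are genuinely orientation preserving and make the defining squares commute; once $f=v^{-1}\circ u$ is identified and Lemma~\ref{lems1} is invoked, the chain $K_2 \sim (\underline{1}\times I,\langle\id,c_0\rangle,\langle f,c_1\rangle) \sim (\underline{1}\times I,\langle\id,c_0\rangle,\langle\id,c_1\rangle)$ closes the argument.
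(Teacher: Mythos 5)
Your proposal is correct and takes essentially the same route as the paper: the paper's proof is a single stacked commutative diagram whose vertical maps are exactly your three ingredients --- $v^{-1}\times\id$ (your $v\times\id$ read in the opposite direction) identifying $K_1$ with the identity cylinder, the homeomorphism $F$ of Lemma~\ref{lems1} for $f=v^{-1}\circ u$, and $v\times\id$ identifying $(\underline{1}\times I,\langle\id,c_0\rangle,\langle f,c_1\rangle)$ with $K_2$. The only difference is presentational: you chain two equivalences via transitivity where the paper composes the same maps in one diagram.
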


\begin{proof}
The homeomorphism $F$ in the center of the following diagram is
the one from Lemma~\ref{lems1} obtained for $f=v^{-1}\circ u$.
\begin{center}
\begin{picture}(120,130)

\put(0,60){\makebox(0,0){$\underline{1}$}}
\put(70,0){\makebox(0,0){$\Sigma\times I$}}
\put(70,40){\makebox(0,0){$\underline{1}\times I$}}
\put(70,120){\makebox(0,0){$\Sigma\times I$}}
\put(70,80){\makebox(0,0){$\underline{1}\times I$}}
\put(140,60){\makebox(0,0){$\underline{1}$}}

\put(15,0){\makebox(0,0){$\langle v,c_0\rangle$}}
\put(15,120){\makebox(0,0){$\langle v,c_0\rangle$}}
\put(125,0){\makebox(0,0){$\langle u,c_1\rangle$}}
\put(125,120){\makebox(0,0){$\langle v,c_1\rangle$}}

\put(25,40){\makebox(0,0){$\langle \id,c_0\rangle$}}
\put(25,80){\makebox(0,0){$\langle \id,c_0\rangle$}}
\put(115,40){\makebox(0,0){$\langle f,c_1\rangle$}}
\put(115,80){\makebox(0,0){$\langle \id,c_1\rangle$}}

\put(77,60){\makebox(0,0){$F$}}
\put(90,100){\makebox(0,0){${v^{-1}\times \id}$}}
\put(85,20){\makebox(0,0){${v\times \id}$}}

\put(70,30){\vector(0,-1){20}} \put(70,70){\vector(0,-1){20}}
\put(70,110){\vector(0,-1){20}}

\put(10,55){\vector(3,-1){40}} \put(10,65){\vector(3,1){40}}
\put(130,55){\vector(-3,-1){40}} \put(130,65){\vector(-3,1){40}}

\qbezier(5,50)(20,0)(50,0) \put(47,0){\vector(1,0){3}}
\qbezier(5,70)(20,120)(50,120) \put(47,120){\vector(1,0){3}}

\qbezier(135,50)(120,0)(90,0) \put(93,0){\vector(-1,0){3}}
\qbezier(135,70)(120,120)(90,120) \put(93,120){\vector(-1,0){3}}

\end{picture}
\end{center}
\end{proof}

\begin{lem}\label{lems3}
If $u,v:\underline{1}\str \Sigma$ are two orientation preserving
homeomorphisms, where $\Sigma$ is a part of the boundary of a
$d$-manifold $M$, then the cobordisms $(M,f+u+g,h)$ and
$(M,f+v+g,h)$ are equivalent.
\end{lem}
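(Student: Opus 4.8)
The plan is to reduce Lemma~\ref{lems3} to Lemma~\ref{lems2} by constructing the required orientation-preserving homeomorphism $F:M\str M$ of the underlying manifold to itself that is the identity on all of the boundary except on the copy of $\Sigma$, where it realises the change from $u$ to $v$. The cobordism $(M,f+u+g,h)$ has ingoing boundary parametrised using $u$ on the distinguished sphere $\Sigma$ and $f$, $g$ on the remaining incoming spheres, while $(M,f+v+g,h)$ uses $v$ on $\Sigma$ and the same $f$, $g$ and the same outgoing parametrisation $h$. So the only discrepancy between the two cobordisms is the parametrisation of the single boundary component $\Sigma$, which switches from $u$ to $v$.

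First I would invoke Lemma~\ref{lems2}: since $u,v:\underline{1}\str\Sigma$ are orientation-preserving homeomorphisms, the lemma produces (via a homeomorphism of $\Sigma\times I$) a way to interpolate between the two parametrisations. Concretely, the homeomorphism in the center of the diagram of Lemma~\ref{lems2}, applied on the collar $\Sigma\times I$ sitting inside $M$, is the identity on one end ($\langle v,c_0\rangle$ or $\langle v,c_1\rangle$) and carries $v$ to $u$ (through $f=v^{-1}\circ u$) on the other end. The key geometric input is a collar neighbourhood of $\Sigma$ in $\partial M$: the boundary of a $d$-manifold has a collar $\Sigma\times I\hookrightarrow M$ (a standard fact I expect is among the topological results of Section~\ref{topological}), and I can modify $M$ only inside this collar.

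The construction of $F$ then proceeds by patching: on the collar $\Sigma\times I$ use the homeomorphism supplied by Lemma~\ref{lems2} (suitably adapted so that it is the identity on the outer end $\Sigma\times\{1\}$ where the collar meets the rest of $M$, and realises $v\mapsto u$ on the boundary end $\Sigma\times\{0\}$), and on $M$ minus the interior of the collar let $F$ be the identity. Because the two pieces agree on the shared interface $\Sigma\times\{1\}$ (where both are the identity), they glue to a well-defined homeomorphism $F:M\str M$; it is orientation preserving since it is the identity outside a collar and the collar piece is orientation preserving by Lemma~\ref{lems2}. By construction $F$ intertwines the two parametrisations of the ingoing boundary (sending the $u$-parametrisation to the $v$-parametrisation, leaving $f$ and $g$ fixed) and fixes the outgoing parametrisation $h$, so it makes the defining square of the equivalence $\sim$ commute.

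The main obstacle I anticipate is the collar step: I must be sure that the boundary component $\Sigma$ admits a collar that can be taken inside $M$ without disturbing the outgoing boundary parametrised by $h$, and that the orientation conventions (ingoing preserving, outgoing reversing, as fixed in Section~\ref{dCobS}) are respected throughout so that $F$ is genuinely orientation preserving. I expect the relevant collar and extension facts to be precisely what Section~\ref{topological} is set up to provide (in the same spirit as Proposition~\ref{prop9} used in Lemma~\ref{lems1}), so the argument is essentially a localisation of Lemma~\ref{lems2} to a collar of $\Sigma$ inside $M$.
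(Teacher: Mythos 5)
Your construction can be made to work, but it is a genuinely different route from the paper's, and it rests on one external input the paper deliberately avoids. The paper never builds a self-homeomorphism $F:M\str M$ at all: its proof is purely formal inside the category $dCobS$. It takes the two cobordisms $K_1=(\Sigma\times I,\langle v,c_0\rangle,\langle v,c_1\rangle)$ and $K_2=(\Sigma\times I,\langle v,c_0\rangle,\langle u,c_1\rangle)$ from Lemma~\ref{lems2}, and observes that $(M,f\pl u\pl g,h)\circ(\mj_{\underline{n}}\pl K_2\pl\mj_{\underline{m}})$ and $(M,f\pl v\pl g,h)\circ(\mj_{\underline{n}}\pl K_1\pl\mj_{\underline{m}})$ are \emph{literally the same} cobordism, since gluing along the outgoing map $\langle u,c_1\rangle$ against the $u$-parametrised boundary identifies exactly the same points of $\Sigma\times\{1\}$ with $\Sigma\subseteq\partial M$ as gluing $\langle v,c_1\rangle$ against the $v$-parametrised boundary, and both composites inherit the ingoing map $\langle v,c_0\rangle$. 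Then Lemma~\ref{lems2} ($K_1\sim K_2\sim$ identity), the identity law of the category, and the fact that $\sim$ is a congruence for composition finish the proof. In effect the paper attaches the interpolating cylinder \emph{externally} by composition, whereas you want to find it \emph{internally} as a collar of $\Sigma$ in $M$. That is where your caveat bites: the collar neighbourhood theorem for \emph{topological} $\partial$-manifolds (Brown, 1962) is true but nontrivial, and contrary to your expectation it is not among the results of Section~\ref{topological}; the whole point of the paper's trick is to sidestep it, pushing all topological content into Proposition~\ref{prop9} (already needed for Lemmata \ref{lems1} and \ref{lems2}) and the assumed category axioms of $dCobS$. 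With Brown's theorem granted, your patching argument (collar piece from Lemma~\ref{lems2}/Proposition~\ref{prop9}, identity elsewhere, continuity by the Pasting Lemma~\ref{pasting}, orientation preservation since the collar piece is level-preserving and isotopic to the identity) does yield the required $F$ and hence the equivalence; what it buys is an explicit homeomorphism of $M$ itself, at the cost of an extra deep theorem, while the paper's argument is shorter and stays entirely within machinery already on the table.
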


\begin{proof}
Let $K_1$ and $K_2$ be the cobordisms from Lemma~\ref{lems2}. For
$\underline{n}$ and $\underline{m}$ being the sources of $f$ and
$g$ respectively, we have
\begin{tabbing}
\hspace{1.5em}$(M,f+u+g,h)$ \= $\sim (M,f+u+g,h)\circ
\mj_{\underline{n+1+m}}$
\\[1ex]
\> $\sim (M,f+u+g,h)\circ
(\mj_{\underline{n}}+K_2+\mj_{\underline{m}})$
\\[1ex]
\> $= (M,f+v+g,h)\circ
(\mj_{\underline{n}}+K_1+\mj_{\underline{m}})$
\\[1ex]
\> $\sim (M,f+v+g,h)\circ \mj_{\underline{n+1+m}}$
\\[1ex]
\> $\sim (M,f+v+g,h)$.
\end{tabbing}
\end{proof}

By iterating Lemma~\ref{lems3} and an analogous result concerning
the outgoing boundary of $M$, we obtain the following result in
which ``connected components'' should be replaced by ``pairs of
points'', when $d=1$.

\begin{cor}\label{cors1}
Every arrow of $dCobS$ is completely determined by a $d$-manifold
and two sequences---one of connected components of the ingoing
boundary and the other of connected components of the outgoing
boundary.
\end{cor}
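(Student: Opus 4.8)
The plan is to unfold ``completely determined'' as the statement that the data assigned to a cobordism is a complete invariant of the equivalence relation $\sim$, and then to establish the nontrivial (injectivity) direction by discarding the irrelevant choice of embeddings via Lemma~\ref{lems3}. To a cobordism $(M,f_0,f_1):\underline{n}\str\underline{m}$ I attach the oriented manifold $M$ (up to orientation preserving homeomorphism), the sequence $(f_0(S_0),\ldots,f_0(S_{n-1}))$ of connected components of the ingoing boundary, and the sequence $(f_1(S_0),\ldots,f_1(S_{m-1}))$ of connected components of the outgoing boundary. That this data is an invariant of $\sim$ is immediate: an equivalence $F$ satisfies $F\circ f_0=f'_0$ and $F\circ f_1=f'_1$, hence carries $f_0(S_i)$ onto $f'_0(S_i)$ and $f_1(S_j)$ onto $f'_1(S_j)$, matching the two sequences. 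Since by definition $f_0$ preserves and $f_1$ reverses orientation, each restriction $f_0|_{S_i}$ is an orientation preserving homeomorphism onto the component $f_0(S_i)$, which is exactly the shape of hypothesis that Lemma~\ref{lems3} consumes.

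The substance of the corollary is the converse: if $(M,f_0,f_1)$ and $(M',f'_0,f'_1)$ carry the same data, then they are equivalent. Here ``same data'' unpacks as an orientation preserving homeomorphism $F:M\str M'$ that carries, for each $i$, the component $f_0(S_i)$ onto $f'_0(S_i)$ and, for each $j$, the component $f_1(S_j)$ onto $f'_1(S_j)$. The cobordism $(M',F\circ f_0,F\circ f_1)$ is equivalent to $(M,f_0,f_1)$ through $F$ itself, so it suffices to compare $(M',F\circ f_0,F\circ f_1)$ with $(M',f'_0,f'_1)$: these share the manifold $M'$, and their ingoing (respectively outgoing) embeddings land componentwise on the same spheres in the same order.

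Now I would write $F\circ f_0=w_0+\cdots+w_{n-1}$ and $f'_0=v_0+\cdots+v_{n-1}$, where for each $i$ both $w_i$ and $v_i$ are orientation preserving homeomorphisms onto the common component $f'_0(S_i)$. Peeling off one summand at a time and applying Lemma~\ref{lems3} with $\Sigma=f'_0(S_i)$---taking $f$ to be the already-replaced prefix $v_0+\cdots+v_{i-1}$, $g$ the untouched suffix $w_{i+1}+\cdots+w_{n-1}$, and $h=F\circ f_1$ held fixed---I replace $w_i$ by $v_i$ while preserving the equivalence class; after $n$ steps the ingoing embedding has become $f'_0$. Repeating the same peeling with the analogous result for the outgoing boundary (the orientation-reversing counterpart of Lemma~\ref{lems3}, obtained from mirror versions of Lemmas~\ref{lems1} and~\ref{lems2}) turns $F\circ f_1$ into $f'_1$. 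Thus $(M',F\circ f_0,F\circ f_1)\sim(M',f'_0,f'_1)$, and composing with the first equivalence gives $(M,f_0,f_1)\sim(M',f'_0,f'_1)$.

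The main obstacle I anticipate is bookkeeping rather than any genuine topology: I must ensure that each application of Lemma~\ref{lems3} isolates exactly one component while everything else is held fixed, so that the iteration converges cleanly first to $f'_0$ and then to $f'_1$. The disjoint-union normal form of the embeddings makes this possible, but it requires care about which summands play the roles of $f$, $u$, $v$, and $g$ at each stage, and about keeping $h$ unchanged throughout the ingoing phase. I would also record the degenerate reading flagged before the statement: when $d=1$ each $S_i$ is the two-point space $S^0$, so the relevant ``components'' are pairs of points rather than single connected components, and the argument goes through verbatim once ``connected component'' is read as ``pair of points''.
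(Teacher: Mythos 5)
Your proposal is correct and takes essentially the same route as the paper, whose entire proof of this corollary is the remark that one iterates Lemma~\ref{lems3} together with its analogue for the outgoing boundary---exactly your componentwise peeling, with the same reduction of the orientation-reversing case to $v^{-1}\circ u$ being orientation preserving. The only difference is cosmetic: by phrasing the converse via a homeomorphism $F$ matching the two sequences, you have in effect also proved the paper's subsequent Proposition~\ref{props1} along the way.
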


Hence, we may denote an arrow from $\underline{n}$ to
$\underline{m}$ by $(M,\Sigma_0,\Sigma_1)$, where
$\Sigma_0=(\Sigma_0^0,\ldots,\Sigma_0^{n-1})$ is a sequence of all
the connected components (or pairs of points, when $d=1$) of the
ingoing boundary and $\Sigma_1=(\Sigma_1^0,\ldots,\Sigma_1^{m-1})$
is a sequence of all the connected components of the outgoing
boundary of $M$.

\begin{prop}\label{props1}
Two cobordisms $(M,\Sigma_0,\Sigma_1)$ and $(N,\Delta_0,\Delta_1)$
are equivalent iff the corresponding sequences are of the same
length and there is a homeomorphism $F:M\str N$ such that for
every $i\in\{0,1\}$ and every $j$, the image of $F$ restricted to
$\Sigma^j_i$ is $\Delta^j_i$.
\end{prop}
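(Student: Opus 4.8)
The plan is to reduce everything to the definition of the relation $\sim$ together with Corollary~\ref{cors1}. Recall that, by that corollary, the notation $(M,\Sigma_0,\Sigma_1)$ stands for the equivalence class of \emph{any} cobordism $(M,f_0,f_1)$ in which $f_0\colon\underline{n}\str M$ is orientation preserving, $f_1\colon\underline{m}\str M$ is orientation reversing, and $f_i$ maps the sphere $S_j$ onto $\Sigma_i^j$ for each $i\in\{0,1\}$ and each $j$ (onto the pair of points $\Sigma_i^j$, when $d=1$); crucially, the class does not depend on which such embeddings are chosen. I would treat the two implications separately, the forward one being immediate and the converse being the place where the real care is needed.

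For the direction from left to right, I would simply unwind the definition of $\sim$. Choosing representatives $(M,f_0,f_1)$ and $(N,g_0,g_1)$ of the two classes, equivalence furnishes an orientation preserving homeomorphism $F\colon M\str N$ with $F\circ f_0=g_0$ and $F\circ f_1=g_1$. Applying $F$ to $\Sigma_i^j=f_i(S_j)$ then yields $F(\Sigma_i^j)=g_i(S_j)=\Delta_i^j$, which is exactly the required condition; and the two sequences have the same length because equivalent arrows share the source $\underline{n}$ and the target $\underline{m}$.

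For the converse, the idea is to build the representatives so that the \emph{given} $F$ witnesses the equivalence on the nose, without any collar or isotopy argument. Fix a representative $(N,g_0,g_1)$ of $(N,\Delta_0,\Delta_1)$ and set $f_0=F^{-1}\circ g_0$ and $f_1=F^{-1}\circ g_1$. Then $f_i(S_j)=F^{-1}(\Delta_i^j)=\Sigma_i^j$, so by Corollary~\ref{cors1} the triple $(M,f_0,f_1)$ represents $(M,\Sigma_0,\Sigma_1)$, while $F\circ f_i=g_i$ holds by construction, so the defining square of $\sim$ commutes. Since the sequences are assumed to have equal length, the sources and targets agree and the two triples are genuinely cobordisms between the same objects $\underline{n}$ and $\underline{m}$.

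The step that requires care---and the only real obstacle---is the orientation bookkeeping. For $(M,f_0,f_1)$ to be an admissible cobordism representing $(M,\Sigma_0,\Sigma_1)$, the map $f_0$ must preserve and $f_1$ must reverse the orientation induced on the boundary of $M$. Since $g_0$ preserves and $g_1$ reverses the orientation induced from $N$, this amounts precisely to $F^{-1}$, and hence $F$, being orientation preserving, as demanded by the definition of equivalence; once this is recorded, $F$ itself serves as the orientation preserving homeomorphism in the defining diagram and the argument closes. I expect the whole proof to be short, its entire content lying in the observation, supplied by Corollary~\ref{cors1}, that the class $(M,\Sigma_0,\Sigma_1)$ is insensitive to the particular boundary parametrizations, so that the transported embeddings $F^{-1}\circ g_i$ may be used freely.
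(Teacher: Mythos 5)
Your argument is correct and takes essentially the same route as the paper: the paper also reduces the converse to the insensitivity of the class to boundary parametrizations (Corollary~\ref{cors1}) by transporting parametrizations along the homeomorphism, the only cosmetic difference being that it pushes chosen parametrizations $h_i^j$ of the components of $M$ forward to $g_i^j=F\circ h_i^j$ on $N$, whereas you pull a representative of $(N,\Delta_0,\Delta_1)$ back via $F^{-1}$. Your explicit remark that $F$ must be orientation preserving for the transported embeddings to be admissible is exactly the bookkeeping implicit in the paper's choice of orientation preserving $h_0^j$ and orientation reversing $h_1^j$.
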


\begin{proof}
The direction from left to right follows from the definition of
equivalence. For the other direction, for every $j$, let
$h_0^j:\underline{1}\str \Sigma_0^j$ be an orientation preserving
homeomorphism and let $h_1^j:\underline{1}\str \Sigma_1^j$ be an
orientation reversing homeomorphism. Define
$g_i^j:\underline{1}\str \Delta_i^j$ to be $F\circ h_i^j$. Then
$F$ underlies the equivalence of $(M,\sum_{j=0}^{n-1}
h_0^j,\sum_{j=0}^{m-1} h_1^j)$ and $(N,\sum_{j=0}^{n-1}
g_0^j,\sum_{j=0}^{m-1} g_1^j)$.
\end{proof}

However, if for $d\geq 3$ we allow closed $(d\!-\!1)$-manifolds
other than collections of spheres to be objects of the category of
$d$-cobordisms, then it would not be the case that the arrows of
such a category are determined just by manifolds and sequences of
ingoing and outgoing boundaries. For example, a solid torus with
the torus as the ingoing boundary and the empty set as the
outgoing boundary does not determine a 3-cobordism. The identity
map and an orientation preserving homeomorphism of the torus that
interchanges parallels and meridians define two different
3-cobordisms. By the result of Lickorish, \cite{L62}, every
closed, connected, 3-manifold is obtainable from $S^3$ by removing
a finite collection of solid tori, and then sewing them back. For
example, if one removes an unknotted solid torus from $S^3$ and
sew it back according to a homeomorphism of torus that
interchanges parallels and meridians, then the resulting
3-manifold is $S^2\times S^1$.

In case of the category of smooth $d$-cobordisms as arrows and
collections of spheres as objects, the analogues of Corollary
\ref{cors1} and Proposition \ref{props1} do not hold for every
$d$. For example, the manifold $S^{d-1}\times I$ with
$S^{d-1}\times \{0\}$ as the ingoing and $S^{d-1}\times \{1\}$ as
the outgoing boundary does not determine a $d$-cobordism. This is
shown as follows.

A \emph{pseudo-isotopy} of a smooth closed manifold $M$ is a
diffeomorphism $F$ of $M\times I$ that restricts to the identity
on $M\times\{0\}$. The restriction of $F$ to $M\times\{1\}$ is, up
to the identification of $M\times\{1\}$ with $M$, a diffeomorphism
$f:M\str M$. One says that $f$ is \emph{pseudo-isotopic} to the
identity.

By a definition analogous to the one given in Section \ref{dCobS}
(cf.\ \cite[1.2.17]{K03}), two smooth $d$-cobordisms
$(S^{d-1}\times I,\langle\id,c_0\rangle,\langle\id,c_1\rangle)$
and $(S^{d-1}\times I,\langle\id,c_0\rangle,\langle f,c_1\rangle)$
are equivalent when there is an orientation preserving
diffeomorphism $F:S^{d-1}\times I\str S^{d-1}\times I$ such that
the following diagram commutes.
\begin{center}
\begin{picture}(120,70)(0,-5)

\put(-2,30){\makebox(0,0){$S^{d-1}$}}
\put(75,55){\makebox(0,0){$S^{d-1}\times I$}}
\put(75,5){\makebox(0,0){$S^{d-1}\times I$}}
\put(153,30){\makebox(0,0){$S^{d-1}$}}
\put(15,50){\makebox(0,0){$\langle\id,c_0\rangle$}}
\put(135,50){\makebox(0,0){$\langle\id,c_1\rangle$}}
\put(15,10){\makebox(0,0){$\langle\id,c_0\rangle$}}
\put(135,10){\makebox(0,0){$\langle f,c_1\rangle$}}
\put(82,30){\makebox(0,0){$F$}}

\put(10,35){\vector(2,1){40}} \put(10,25){\vector(2,-1){40}}
\put(140,35){\vector(-2,1){40}} \put(140,25){\vector(-2,-1){40}}
\put(75,45){\vector(0,-1){30}}
\end{picture}
\end{center}
This is equivalent to the fact that $f$ is pseudo-isotopic to the
identity on $S^{d-1}$. Since it is not the case that for every $d$
every orientation preserving diffeomorphism of $S^{d-1}$ is
pseudo-isotopic to the identity (see \cite{KM63}, \cite{B67} and
\cite{C70}), we have that there is not always a unique
$d$-cobordism corresponding to $S^{d-1}\times I$, with chosen
ingoing and outgoing boundaries.

However, for $d\leq 6$ (and not only for these dimensions), every
orientation preserving diffeomorphism of $S^{d-1}$ is
pseudo-isotopic to the identity. This fact, for $d=2$, is
implicitly used by Kock, \cite{K03}, in order to pass from smooth
2-cobordisms to the pictures representing the underlying
manifolds. A result analogous to our Corollary \ref{cors1} holds
for 2-cobordisms of \cite{K03}.

\section{A Frobenius structure of spheres}\label{Frobenius}

In this section we follow the pattern given for $S^1$ in
\cite{A96} and \cite{K03} in order to define a Frobenius structure
for a sphere of any finite dimension.

For an oriented $d$-disc $D$ and its boundary $\partial D$, let
$\underline{\ed}$ be the $d$-cobordism $(D,\emptyset,(\partial
D))$ and let $\underline{\es}$ be the $d$-cobordism $(D,(\partial
D),\emptyset)$.

\begin{figure}[h!h!h!]
\centerline{\includegraphics[width=0.8 \textwidth]{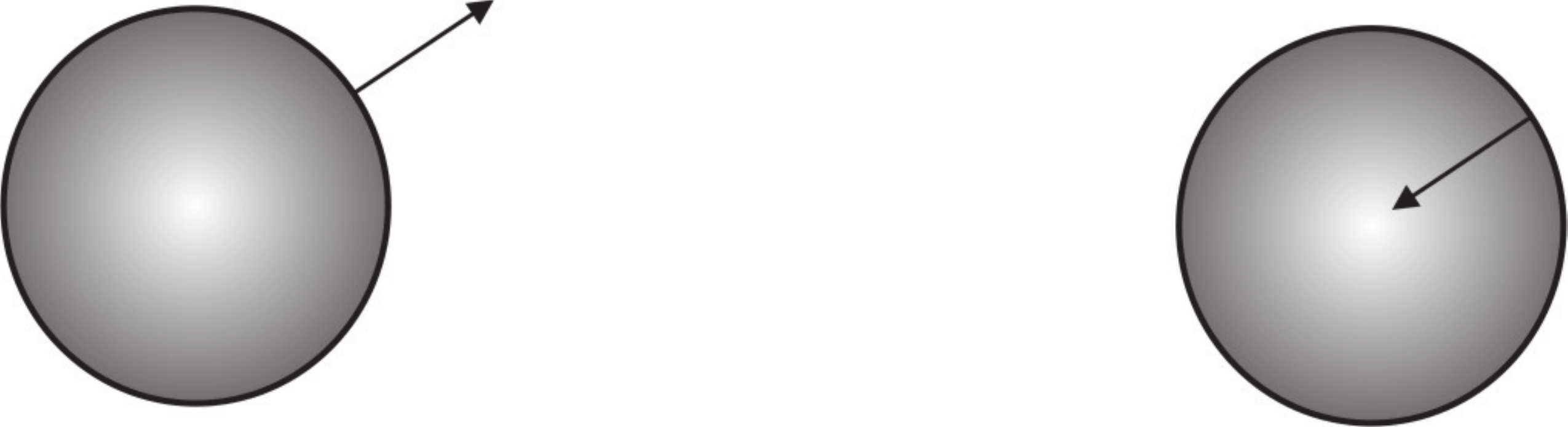}}
\caption{unit, counit} \label{slika1a}
\end{figure}

On the other hand, for $D_1$ and $D_2$ being two nonintersecting
$d$-discs in the interior of $D$, let $M$ be a $d$-manifold
obtained from $D$ by removing the interiors of $D_1$ and $D_2$. We
define $\underline{\md}$ to be the $d$-cobordism $(M,(\partial
D_1,\partial D_2),(\partial D))$ and $\underline{\ms}$ to be the
$d$-cobordism $(M,(\partial D),(\partial D_1,\partial D_2))$.

\begin{figure}[h!h!h!]
\centerline{\includegraphics[width=0.8 \textwidth]{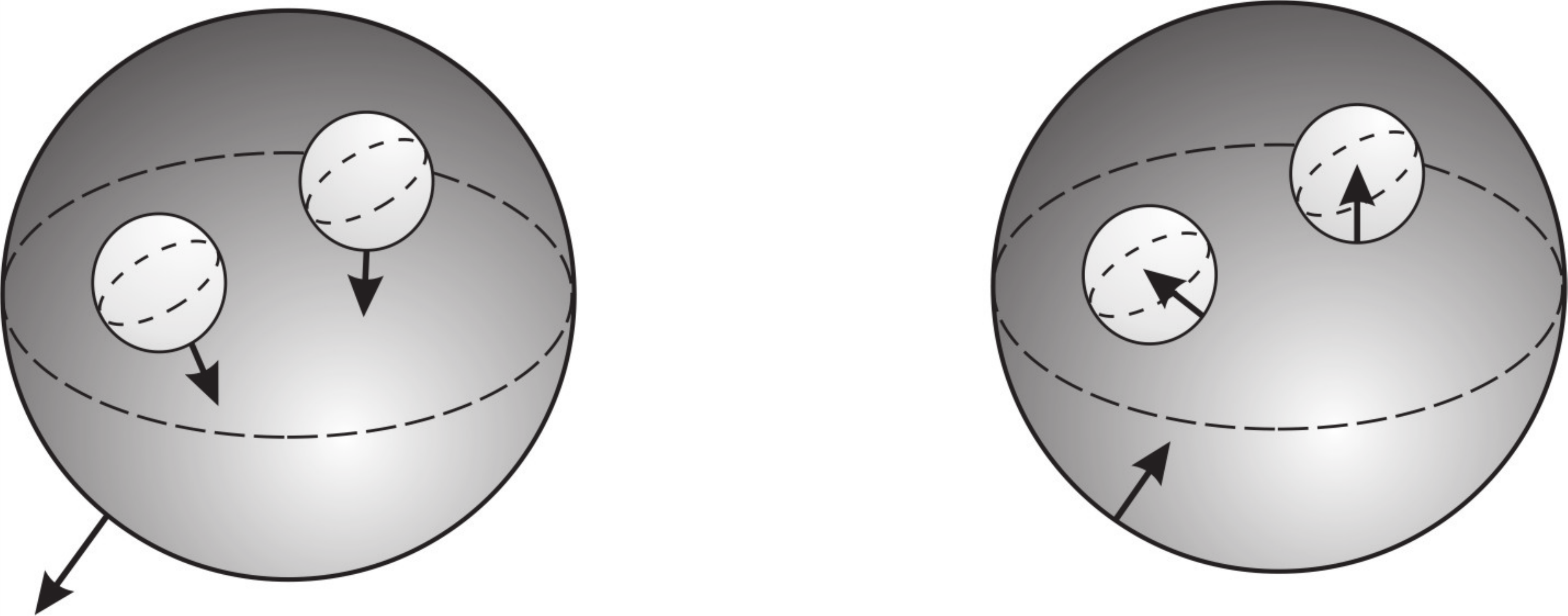}}
\caption{multiplication, comultiplication} \label{slika1b}
\end{figure}

It is not difficult to see that the above cobordisms, together
with the symmetric monoidal structure of $dCobS$, satisfy the
conditions necessary for $S^0$ to be a symmetric Frobenius object
of $1CobS$, and $S^{d-1}$, for $d\geq 2$ to be a commutative
Frobenius object of $dCobS$. For example, the equation (\ref{as}),
for $d=3$, is illustrated by the following picture.

\begin{figure}[h!h!h!]
\centerline{\includegraphics[width=0.8 \textwidth]{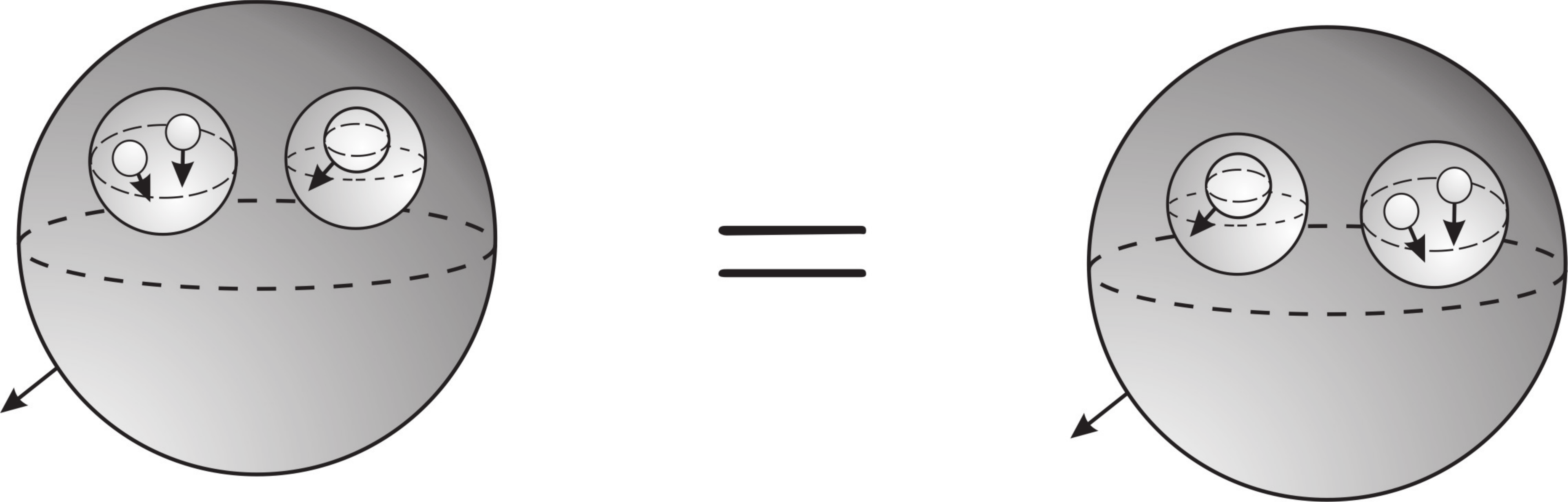}}
\caption{associativity} \label{slika1c}
\end{figure}

The defined Frobenius structure of $S^{d-1}$ guarantees that every
$d$TQFT maps this sphere to a Frobenius algebra. The image of
$S^{d-1}$ by a $d$TQFT is a commutative Frobenius algebra when
$d\geq 2$. This is a part of \cite[Proposition~1]{S95}, which is
essentially due to Dijkgraaf, \cite{D89}.

\section{Brauerian representation as a $1$TQFT}\label{Brauerian}

In this section, we pay attention to $1CobS$ in particular. We
show that Brauer, \cite{B37}, anticipated $1$TQFT by his matrix
representation of a class of diagrammatic algebras. When
restricted to $1CobS$, such a representation determines a matrix
Frobenius algebra as the image of the Frobenius object $S^{0}$.

Following the definition given in Section~\ref{dCobS}, the
category $1CobS$ has the objects
$\underline{0},\underline{1},\underline{2}, \ldots$, where
$\underline{0}$ is the empty set and $\underline{n}$ is the
$0$-dimensional manifold $\{-1,1,\ldots,3n-4,3n-2\}$ for which we
fix the orientation
\[
\varepsilon(x)=\left\{ \begin{array}{rl} 1, & x=3i-1, \\
-1, & x=3i+1.\end{array}\right.
\]
Hence, we may envisage an object of $1CobS$ as a finite sequence
built out of the pair $+-$. The arrows of $1CobS$ are the
equivalence classes of $1$-cobordisms. For example, the cobordism
$(M,f_0,f_1):\underline{2}\str \underline{1}$

\begin{center}
\begin{picture}(120,60)

\put(10,50){\vector(0,-1){35}}
\put(10,8){\makebox(0,0){$f_{0}(4)$}}
\put(10,55){\makebox(0,0){$f_{0}(-1)$}}
\put(40,50){\vector(0,-1){35}}
\put(40,8){\makebox(0,0){$f_{0}(1)$}}
\put(40,55){\makebox(0,0){$f_{1}(1)$}}
\put(70,50){\vector(0,-1){35}}
\put(70,8){\makebox(0,0){$f_{1}(-1)$}}
\put(70,55){\makebox(0,0){$f_{0}(2)$}}

\put(100,33){\circle{30}} \put(116,35){\vector(0,0){0}}
\put(0,33){\makebox(0,0){$M$}}

\end{picture}
\end{center}
is illustrated by the following picture

\begin{center}
\begin{picture}(120,60)

\qbezier(5,46)(35,10)(75,46) \put(53,17){\vector(-1,1){30}}
\put(27,17){\line(1,1){30}} \put(29,19){\vector(-1,-1){3}}
\put(72,43){\vector(1,1){3}}

\put(100,33){\circle{30}} \put(116,35){\vector(0,0){0}}

\put(1,50){$+$} \put(20,50){$-$} \put(54,50){$+$} \put(72,50){$-$}
\put(22,8){$+$} \put(50,8){$-$}

\end{picture}
\end{center}

The category $1CobS$ is skeletal. If there is an isomorphism $K$
between $\underline{n}$ and $\underline{m}$, then it is easy to
see that there are no \emph{cup} components in $K$, i.e.
components presented by
\begin{center}
\begin{picture}(140,30)(0,10)

\qbezier(10,30)(30,5)(50,30) \put(30,18){\vector(1,0){3}}
\put(6,35){$+$} \put(47,35){$-$}

\put(70,15){or}

\qbezier(90,30)(110,5)(130,30) \put(110,18){\vector(-1,0){3}}
\put(86,35){$-$} \put(127,35){$+$}

\end{picture}
\end{center}
Otherwise, there would be such components in $K^{-1} \circ K:
\underline{n}\str \underline{n}$, which is impossible.
Analogously, there are no \emph{cap} components in $K$, hence
$n=m$, which implies $\underline{n}=\underline{m}$ (cf.\
Proposition~\ref{isomorphism}).

\vspace{2ex}

For the infinite sequence $-1,1,2,4,\ldots,3i-1, 3i+1,\ldots$ let
$\underline{\underline{n}}$ denote the set of its first $n$
members. In order to obtain a symmetric strict monoidal category
containing $1CobS$ as a full subcategory, let $1Cob$ be the
category whose set of objects is
$$\{(\underline{\underline{n}},\varepsilon)\mid n \in N, \varepsilon:
\underline{\underline{n}}\str \{-1,1\}\}$$ and whose arrows are
the equivalence classes of cobordisms of the form
$$(M,f_0: (\underline{\underline{n}},\varepsilon_{0})\str M,f_1:
(\underline{\underline{m}},\varepsilon_{1})\str M),$$ where $M$ is
a $1$-manifold such that its boundary $\partial M$ is a disjoint
union of $\Sigma_0$ and $\Sigma_1$, and $f_0$ is an orientation
preserving embedding whose image is $\Sigma_0$, while $f_1$ is an
orientation reversing embedding whose image is $\Sigma_1$. The
symmetric monoidal structure of $1Cob$ is defined as for $1CobS$
by ``putting side by side'' and by using the symmetry defined in
an analogous way as $\underline{\tau_{n,m}}$ defined in
Section~\ref{dCobS}. A connected component of $M$ homeomorphic to
$S^{1}$ is called \emph{circular} component of the cobordism.
Again, as in Corollary \ref{cors1}, every arrow of $1Cob$ is
completely determined by a $1$-manifold $M$ and two sequences
$\Sigma_0=(\Sigma_0^0,\ldots,\Sigma_0^{n-1})$ and
$\Sigma_1=(\Sigma_1^0,\ldots,\Sigma_1^{m-1})$ of points, one of
the ingoing boundary and the other of the outgoing boundary. The
category $1Cob$ is not skeletal since we have two different
objects $(\underline{\underline{2}},\varepsilon_{0})$ and
$(\underline{\underline{2}},\varepsilon_{1})$ with
$\varepsilon_{0}(-1)=1,\varepsilon_{0}(1)=-1,\varepsilon_{1}(-1)=-1,
\varepsilon_{1}(1)=1$, which are isomorphic via symmetry.

Brauer, \cite{B37}, introduced a class of diagrammatic algebras
and found their matrix representation. In \cite[Section~6]{DP003}
a generalization of this representation to a category of diagrams
is given (see also \cite{DP03} and \cite[Section~14]{DP12}). This
generalization leads to the following assignment of matrices to
the arrows of $1Cob$.

Let $\mathcal{F}$ be a field of characteristic $0$ and let $p$ be
a natural number greater than or equal to $2$. For an arrow
$K=(M,\Sigma_0,\Sigma_1):(\underline{\underline{n}},
\varepsilon_{0}) \str (\underline{\underline{m}},\varepsilon_{1})$
of $1Cob$, let $\rho_K$ be the following equivalence relation on
the disjoint union $(n\times \{0\})\cup (m\times\{1\})$ of finite
ordinals $n=\{0,\ldots,n-1\}$ and $m=\{0,\ldots,m-1\}$. For
$(i,k)$ and $(j,l)$ elements of $(n\times \{0\})\cup
(m\times\{1\})$, we have that $(i,k)\rho_K (j,l)$
\[
\mbox{\rm when the points } \Sigma_k^i\; \mbox{\rm and }
\Sigma_l^j \; \mbox{\rm belong to the same connected component of
} M.
\]

For every $K:(\underline{\underline{n}}, \varepsilon_{0}) \str
(\underline{\underline{m}},\varepsilon_{1})$ we define a matrix
$A(K) \in \mathcal{M}_{p^{m}\times p^{n}}$ in the following way.
For $a_0$ such that $0\leq a_{0} < p^{n}$, which denotes a column
of $A(K)$, and $a_1$ such that $0\leq a_{1} < p^{m}$, which
denotes a row of $A(K)$, write $a_{0}$ in the base $p$ system with
$n$ digits $a_{0}^{0}\ldots a_{0}^{n-1}$, and $a_{1}$ in the base
$p$ system with $m$ digits $a_{1}^{0}\ldots a_{1}^{m-1}$. For
example, if $p=2$, $n=5$, $m=3$, $a_{0}=10$, $a_{1}=5$, we have
$a_{0}=01010$ and $a_{1}=101$.

We define the $(a_{1},a_{0})$ element of $A(K)$ to be $1$ when for
every $(i,k)$ and $(j,l)$ from $(n\times \{0\})\cup
(m\times\{1\})$ we have that
\[
(i,k)\rho_K (j,l) \; \Rightarrow a_{k}^{i}=a_{l}^{j};
\]
otherwise it is $0$.

If we take $K$ to be given by the following picture,
\begin{center}
\begin{picture}(100,40)

\qbezier(50,32)(70,10)(95,32) \put(32,7){\vector(-1,1){25}}
\put(8,7){\vector(1,1){25}} \put(50,7){\line(1,1){25}}
\put(2,35){$-$} \put(30,35){$-$} \put(46,35){$+$} \put(72,35){$+$}
\put(92,35){$-$}

\put(2,-2){$-$} \put(30,-2){$-$} \put(46,-2){$+$}

\put(71,21){\vector(1,0){3}} \put(52,9){\vector(-1,-1){3}}
\end{picture}
\end{center}
and we take $p=2$ as above, then the $(5,10)$ element of $A(K)$ is
$1$ since the sequences $01010$ and $101$ ``match'' into the
picture of $\rho_K$.

\begin{center}
\begin{picture}(100,40)

\qbezier(50,32)(70,10)(95,32) \put(32,7){\line(-1,1){25}}
\put(8,7){\line(1,1){25}} \put(50,7){\line(1,1){25}}
\put(2,35){$0$} \put(32,35){$1$} \put(46,35){$0$} \put(72,35){$1$}
\put(92,35){$0$}

\put(2,-3){$1$} \put(32,-3){$0$} \put(46,-3){$1$}

\end{picture}
\end{center}

Let $\mathbf{Mat}_{\mathcal{F}}$ be the category whose objects are
vector spaces $\mathcal{F}^{n}$, $n\geq 1$, and whose arrows from
$\mathcal{F}^{n}$ to $\mathcal{F}^{m}$ are $m \times n$ matrices
over the field $\mathcal{F}$. The identity matrix of order $n$ is
the identity arrow on $\mathcal{F}^{n}$ and matrix multiplication
is the composition of arrows. One can identify the objects of
$\mathbf{Mat}_{\mathcal{F}}$ with natural numbers (the dimensions
of vector spaces) as it was done in \cite{DP12}. The category
$\mathbf{Mat}_{\mathcal{F}}$ may be considered as a skeleton of
the category $\mathbf{Vect}_{\mathcal{F}}$ of finite-dimensional
vector spaces over $\mathcal{F}$.  Hence,
$\mathbf{Mat}_{\mathcal{F}}$ and $\mathbf{Vect}_{\mathcal{F}}$ are
equivalent.

The category $\mathbf{Mat} _{\mathcal{F}}$ is symmetric strict
monoidal with respect to the multiplication on objects considered
as natural numbers, and the Kronecker product on arrows
(matrices). The symmetry is brought by the family of $nm \times
mn$ permutation matrices $S_{n,m}$. The matrix $S_{n,m}$ is the
matrix representation of the linear map $\sigma:
\mathcal{F}^{n}\otimes \mathcal{F}^{m} \rightarrow
\mathcal{F}^{m}\otimes \mathcal{F}^{n}$ with
 respect to the standard ordered bases, defined on the basis vectors
by $\sigma(e_{i}\otimes f_{j})=f_{j}\otimes e_{i}$. For example,
$S_{3,2}$ is the matrix
$$
    {\begin{bmatrix}
        1 & 0 & 0 & 0 & 0 & 0\\
        0 & 0 & 1 & 0 & 0 & 0\\
        0 & 0 & 0 & 0 & 1 & 0\\
        0 & 1 & 0 & 0 & 0 & 0\\
        0 & 0 & 0 & 1 & 0 & 0\\
        0 & 0 & 0 & 0 & 0 & 1\\
    \end{bmatrix}}
$$

Consider the following functor $B$ from $1Cob$ to $\mathbf{Mat}_
{\mathcal{F}}$. On objects it is defined by
$B(\underline{\underline{n}} ,\varepsilon)=p^{n}$ and on arrows we
define it as
\[
B(K)=p^{c}\cdot A(K),
\]
where $c$ is the number of circular components of $K$, and $A(K)$
is the ${0-1}$ matrix defined above. That this is indeed a functor
stems from \cite[Section~5, Proposition~4]{DP003} and that it is
faithful stems from \cite[Section~14]{DP12}. We shall not go here
into any more detail about this matter.

In order to conclude that this functor is monoidal, note that for
matrices $X \in \mathcal{M}_{m \times n}$ and $Y \in
\mathcal{M}_{k \times l}$ we have $Z=X\otimes Y \in
\mathcal{M}_{(m\cdot k) \times (n \cdot l)}$ and $$x_{i,j}\cdot
y_{q,r}=z_{i\cdot k + q, j \cdot l + r}.$$ If $K$ is obtained from
$K_{1}$ and $K_{2}$ by ``putting side by side'' and $Z$ is the
matrix $A(K)$, while $X$ and $Y$ are $A(K_{1})$ and $A(K_{2})$
respectively, then
\[
z_{i\cdot k + q, j \cdot l + r}=1\;\; \mbox{\rm iff }\;\;
x_{i,j}=y_{q,r}=1.
\]
In our example for $K_{1}$ and $K_{2}$, respectively being
\begin{center}
\begin{picture}(130,40)

\qbezier(90,32)(110,10)(135,32) \put(32,7){\vector(-1,1){25}}
\put(8,7){\vector(1,1){25}} \put(90,7){\line(1,1){25}}
\put(2,35){$-$} \put(30,35){$-$} \put(86,35){$+$}
\put(112,35){$+$} \put(132,35){$-$}

\put(2,-2){$-$} \put(30,-2){$-$} \put(86,-2){$+$}

\put(111,21){\vector(1,0){3}} \put(92,9){\vector(-1,-1){3}}
\put(60,20){\makebox(0,0){and}}
\end{picture}
\end{center}
we have $z_{5,10}=x_{2,1} \cdot y_{1,2}$.

It is easy to check that $B$ maps symmetry to symmetry.
Consequently, the functor $B$ may be said to be a $1$TQFT.

Let us now restrict the functor $B$ to the category $1CobS$. Since
$S^0$, i.e.\ the object $\underline{1}$ is equipped with a
Frobenius structure in $1CobS$, consequently in $1Cob$, the image
of $\underline{1}$ by the monoidal functor $B$ is a Frobenius
algebra. It is interesting that $B$ brings to $B(\underline{1})$
the structure of a matrix Frobenius algebra (for the notion of
matrix Frobenius algebra see \cite[2.2.16]{K03}).

Note that $B(\underline{1})$ is $p^2$, i.e.\ the vector space
$\mathcal{F}^{p^2}$. Every vector
\[
\vec{v}=\left[\!\!
\begin{array}{c} v_0 \\ \vdots \\ v_{p^2-1} \end{array}
\!\!\right]\in \mathcal{F}^{p^2}
\]
corresponds to the matrix $H(\vec{v})\in \mathcal{M}_{p\times p}$
whose $(i,j)$ member is $v_{i\cdot p+j}$. This is the standard
isomorphism $H:\mathcal{F}^{p^2}\str\mathcal{M}_{p\times p}$. In
order to show that $B$ brings the structure of a matrix Frobenius
algebra to $\mathcal{M}_{p\times p}=B(\underline{1})$, it suffices
to show that $B(\underline{\md})$ represents the multiplication of
matrices and that $B(\underline{\es})$ represents the trace form.

The arrow $\underline{\md}:\underline{2}\str \underline{1}$ of
$1Cob$ is presented by the following picture
\begin{center}
\begin{picture}(80,40)

\qbezier(25,32)(40,10)(55,32) \put(30,7){\line(-1,1){25}}
\put(50,7){\vector(1,1){25}} \put(1,35){$+$} \put(21,35){$-$}
\put(51,35){$+$} \put(72,35){$-$} \put(26,0){$+$} \put(46,0){$-$}

\put(41,21){\vector(-1,0){3}} \put(27,10){\vector(1,-1){3}}

\end{picture}
\end{center}
and the corresponding matrix $B(\underline{\md})$ is in
$\mathcal{M}_{p^2\times p^4}$. Our goal is to show that for the
standard isomorphism
\[
H_2:\mathcal{F}^{p^4}\str\mathcal{M}_{p^2\times p^2}
\]
defined as $H$ above (i.e.\ $(i,j)$ member of $H_2(\vec{v})$ is
$v_{i\cdot p^{2}+j}$) and arbitrary matrices
$X,Y\in\mathcal{M}_{p\times p}$ we have that
\[
H(B(\underline{\md})\,H_2^{-1}\!(X\otimes Y))=XY.
\]

When $p=2$, the matrix $B(\underline{\md})$ is
\[
\left[\!\!
\begin{array}{cccccccccccccccc} 1 & 0 & 0 & 0 & 0 & 0 & 1 & 0 & 0 & 0 & 0 & 0 & 0 & 0 & 0 & 0 \\
0 & 1 & 0 & 0 & 0 & 0 & 0 & 1 & 0 & 0 & 0 & 0 & 0 & 0 & 0 & 0 \\
0 & 0 & 0 & 0 & 0 & 0 & 0 & 0 & 1 & 0 & 0 & 0 & 0 & 0 & 1 & 0 \\
0 & 0 & 0 & 0 & 0 & 0 & 0 & 0 & 0 & 1 & 0 & 0 & 0 & 0 & 0 & 1
\end{array} \!\!\right]
\]
and $H_2^{-1}(X\otimes Y)$ is the vector
\[
\vec{v}=\left[\!\!
\begin{array}{c} v_0 \\ \vdots \\ v_{15} \end{array}
\!\!\right]\in \mathcal{F}^{16}
\]
where $v_0=x_{00}\cdot y_{00}$, $v_1=x_{00}\cdot y_{01}$,
$v_6=x_{01}\cdot y_{10}$, $v_7=x_{01}\cdot y_{11}$,
$v_8=x_{10}\cdot y_{00}$, $v_{9}=x_{10}\cdot y_{01}$,
$v_{14}=x_{11}\cdot y_{10}$ and  $v_{15}=x_{11}\cdot y_{11}$.
Hence, $B(\underline{\md})\,H_2^{-1}\!(X\otimes Y)$ is
\[
\left[\!\!
\begin{array}{c} x_{00}\cdot y_{00}+x_{01}\cdot y_{10} \\ x_{00}\cdot y_{01}+x_{01}\cdot y_{11}
\\ x_{10}\cdot y_{00}+x_{11}\cdot y_{10} \\ x_{10}\cdot y_{01}+x_{11}\cdot y_{11} \end{array}
\!\!\right]
\]
which is mapped to $XY$ by $H$.

For the general case, let
$\vec{u}=B(\underline{\md})\,H_2^{-1}\!(X\otimes Y)$ and
$A=H(\vec{u})$. We want to show that for $0\leq i,j\leq p-1$,
\[
a_{i,j}=\sum_{k=0}^{p-1} x_{i,k}\cdot y_{k,j}.
\]
Since the element $a_{i,j}$ is equal to $u_{i\cdot p+j}$, we are
interested in the ${(i\cdot p+j)}$-th row of the matrix
$B(\underline{\md})$. In this row, which in the base $p$ system is
presented by the sequence $ij$, the entry $1$ occurs $p$ times in
the columns presented in the base $p$ system by the sequences
\[
i 0 0 j,\quad i11j,\quad \ldots \quad ikkj, \quad \ldots \quad i
(p-1) (p-1) j,
\]
and all the other elements are $0$. The column presented by $ikkj$
is actually the $(i\cdot p^{3}+k\cdot p^{2}+k \cdot p+j)$-th
column of the matrix $B(\underline{\md})$. Since the corresponding
row of $H_2^{-1}\!(X\otimes Y)$ is equal to $x_{i,k} \cdot
y_{k,j}$, we have that
\[
a_{i,j}=u_{i\cdot p+j}=\sum_{k=0}^{p-1} x_{i,k}\cdot y_{k,j}.
\]

The arrow $\underline{\es}:\underline{1}\str \underline{0}$ of
$1Cob$ is presented by the following picture
\begin{center}
\begin{picture}(60,30)(0,10)

\qbezier(10,30)(30,5)(50,30) \put(30,18){\vector(1,0){3}}
\put(6,35){$+$} \put(47,35){$-$}

\end{picture}
\end{center}
and the corresponding matrix $B(\underline{\es})$ is in
$\mathcal{M}_{1\times p^2}$. Our goal is to show that for an
arbitrary matrix $X\in\mathcal{M}_{p\times p}$ we have that
\[
B(\underline{\es})\,H^{-1}\!(X)= \mbox{\rm tr} (X).
\]

When $p=2$, this equality reads
\[
\left[\!\!
\begin{array}{cccc} 1 & 0 & 0 & 1
\end{array} \!\!\right] \left[\!\!
\begin{array}{c} x_{00} \\ x_{01} \\ x_{10} \\ x_{11}
\end{array} \!\!\right]= x_{00}+x_{11}.
\]
For the general case, in the row of the matrix
$B(\underline{\es})$ the entry $1$ occurs $p$ times in the columns
presented in the base $p$ system by the sequences
\[
00,\quad 11,\quad \ldots \quad kk, \quad \ldots \quad
 (p-1)(p-1),
\]
and all the other elements are $0$. The column presented by $kk$
is actually the $(k \cdot p+k)$-th column of the matrix
$B(\underline{\es})$. Since the corresponding row of $H^{-1}\!(X)$
is equal to $x_{k,k}$, we have that
\[
B(\underline{\es})\,H^{-1}\!(X)=\sum_{k=0}^{p-1} x_{k,k}.
\]

\section{The category $\mathbf{K}$}\label{categoryK}

Our intention is to define the category $\mathbf{K}$ as a PROP, in
the sense of \cite[Chapter~V]{ML65}, having 1 as the universal
commutative Frobenius object, in the same sense as 1, as an object
of the simplicial category $\Delta$ is the universal monoid. The
category $\Delta$ is introduced in \cite[Section~VII.5]{ML71} as
the concrete category of monotone functions between finite
ordinals. Alternatively, this category could be introduced in a
pure syntactical manner by generators and relations, via
\cite[Proposition~2, Section~VII.5]{ML71}.

We choose this alternative approach and present the category
$\mathbf{K}$ by generators and relations. In this way we stipulate
the intended universal property in its definition.

More formally, consider the category $\mathcal{F}$ whose objects
are symmetric strict monoidal categories with one distinguished
commutative Frobenius object and whose arrows are symmetric
monoidal functors preserving distinguished objects and their
Frobenius structures. Since the notions of symmetric strict
monoidal category and commutative Frobenius object are purely
equational, the forgetful functor $G$ from $\mathcal{F}$ to the
category $\set$ of sets and functions, which maps an object of
$\mathcal{F}$, i.e.\ a symmetric monoidal category, to the set of
its objects, has a left adjoint $F$. As in universal algebra,
$FX$, for a set $X$ is built out of a term model. Our category
$\mathbf{K}$ is $F\emptyset$. What follows is a brief description
of our construction of $\mathbf{K}$ and we refer to
Section~\ref{appendixK} for details.

The category $\mathbf{K}$ has the set of finite ordinals $\omega$
as the set of objects. The ordinal $n$ is interpreted as the
$n$-th tensor power of the distinguished commutative Frobenius
object. Hence, the monoidal structure on objects is given by
addition. In order to define the arrows of this category, an
equational system is introduced in Section~\ref{appendixK}.

Briefly, as in every syntactical construction of a free object,
words built out of $\mj$, $\circ$, $\otimes$, $\tau$, $\md$,
$\ed$, $\ms$ and $\es$ denoting the arrows of $\mathbf{K}$ are
introduced. We call these words \emph{terms}. Every such a term
has its source and target. The terms are quotient by the smallest
equivalence relation guaranteeing that 1 is a commutative
Frobenius object in $\mathbf{K}$. (See Section~\ref{appendixK} for
details.) The equivalence class of a term $f$ is denoted by $[f]$
and $\{[f]\mid f\;\mbox{\rm is a term}\}$ is the set of arrows of
$\mathbf{K}$. The source of $[f]$ is the source of $f$ and the
same holds for targets. The identity arrow on $n$ is $[\mj_n]$ and
$[g]\circ[f]$ is $[g\circ f]$.

The category $\mathbf{K}$ is strict monoidal with respect to the
monoidal structure given by $\otimes$ and 0. Its symmetry is given
by the family of $\tau$ arrows. It is \emph{skeletal} by
Corollary~\ref{skeletal}.

The category $\mathbf{K}$, since it is the image of the initial
object in $\set$ under the functor $F$, has the following
universal property: for every commutative Frobenius object $M$ in
a symmetric strict monoidal category $\mathcal{M}$, there is a
unique symmetric monoidal functor $U:\mathbf{K}\str \mathcal{M}$
such that $U(1)=M$, and $U$ preserves the Frobenius structure.
Hence, for $d\geq 2$, there is a unique symmetric monoidal functor
from $\mathbf{K}$ to $dCobS$ that maps 1 to $\underline{1}$. We
call this functor the \emph{interpretation} of $\mathbf{K}$ in
$dCobS$. That the interpretation is faithful is shown in
Section~\ref{faithfulness}.

The equations (\ref{ct}) (see Section~\ref{appendixK}) are usually
not mentioned in the calculations that follow. Hence, we omit
parenthesis tied to nested compositions, and erase or add
compositions with identities, when necessary.

\begin{rem}
We could start with the category $2CobS$ instead of $\mathbf{K}$
(cf.\ Corollary~\ref{universal}), which would be more in the style
of the definition of the simplicial category given in
\cite[Section~VII.5]{ML71}. However, for the proof of our main
result, if we relied on 2CobS instead on K, then we would miss the
syntax necessary for our approach. This would lead to a certain
amount of imprecision.
\end{rem}

\section{Normal form for arrows of $\mathbf{K}$}\label{normal
form}

In this section, we define a normal form for terms and show that
every arrow of $\mathbf{K}$ is representable by a term in normal
form. This normal form is essentially the same as the one given in
\cite[1.4.16]{K03}. The normal form is then used in
Section~\ref{faithfulness} for the proof of faithfulness of the
interpretation. Some proofs are illustrated by pictures
corresponding to the interpretation of $\mathbf{K}$ in $2CobS$.

We start with some auxiliary notions. Let $V_{-1}=\ed$,
$\Lambda_{-1}=\es$, $V_0=H_0=\Lambda_0=\mj_1$, and for $n\geq 1$,
let
\[
V_n=\md\circ(\md\otimes\mj_1)\circ\ldots\circ(\md\otimes\mj_{n-1}):n+1\str
1,
\]
\[
H_n=\underbrace{(\md\circ\ms)\circ\ldots\circ(\md\circ\ms)}_{n}:1\str
1,
\]
\[
\Lambda_n=(\ms\otimes\mj_{n-1})\circ\ldots\circ(\ms\otimes\mj_1)\circ\ms:1\str
n+1.
\]
With the help of these terms, for $n,m,p\geq 0$, we define
$E_{p,m,n}$ as
\[
\Lambda_{p-1}\circ H_m\circ V_{n-1}:n\str p
\]

A term is a $\tau$-\emph{term} when $\md$, $\ed$, $\ms$ and $\es$
do not occur in it. For every $\tau$-term $f:n\str n$ there exists
a unique permutation on $n$ that corresponds to~$f$.

A term is \emph{special} when it is a $\tau$-term, or for $k\geq
1$, it is of the form
\[
\pi\circ \bigotimes_{i=1}^k E_{p_i,m_i,n_i} \circ \chi,
\]
where $\pi$ and $\chi$ are $\tau$-terms. We call $\chi$, the
\emph{head}, $\bigotimes_{i=1}^k E_{p_i,m_i,n_i}$, the
\emph{center}, and $\pi$, the \emph{tail} of this term.

\begin{prop}\label{tv1}
Every term is equal to a special term.
\end{prop}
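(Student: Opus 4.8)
The plan is to prove this by structural induction on the term $f$, showing that each of the generating operations preserves the property "is equal to a special term". The base cases and the three inductive cases ($\circ$, $\otimes$, and the symmetry generators) must each be reduced to the special normal form $\pi\circ\bigotimes_i E_{p_i,m_i,n_i}\circ\chi$. First I would dispose of the base cases: the generators $\mj_1,\tau$ are $\tau$-terms (hence special by definition), while $\md,\ed,\ms,\es$ are each instances of $E_{p,m,n}$ with a single factor $k=1$ and appropriate $p,m,n$ (for example $\md=E_{1,0,2}$, $\ed=E_{1,0,0}$, and so on), composed with identity $\tau$-terms for the head and tail.

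For the inductive step on $\otimes$, the key observation is that if $f$ and $g$ are already special, then $f\otimes g$ can be rewritten as special by collecting the two centers into one larger center $\bigotimes E_{p_i,m_i,n_i}$, and merging the heads and tails. Here I would use naturality of $\tau$ (the hexagon and naturality equations \eqref{nt} cited in the statement of symmetric monoidal category) to slide the $\tau$-terms of one factor past the center of the other, so that the combined head and combined tail each remain $\tau$-terms. The genuinely hard case, and I expect it to be the main obstacle, is composition $g\circ f$. Here one special term's tail $\pi_f$ (a $\tau$-term) meets the next special term's head $\chi_g$ (another $\tau$-term), but between them sit two centers that must be fused into a single center. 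The fusion requires showing that a composite $E_{p',m',n'}\circ(\text{permutation})\circ E_{p,m,n}$, when the permutation shuffles outputs of one $E$ into inputs of the next, collapses back into a new tensor product of $E$'s. This is where the Frobenius equations \eqref{fb}, the associativity/coassociativity \eqref{as}/\eqref{ca}, and commutativity \eqref{cm}/\eqref{cocm} all get used: they let one absorb a $V$ into a following $\Lambda$ (creating an $H$, i.e.\ the handle count), and let one permute the cups and caps freely so that the bookkeeping indices $p,m,n$ add up correctly.

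The technical heart, then, is a lemma of the form: $V_{p-1}\circ\Lambda_{q-1}=E_{q,1,p}$ up to the conventions above, i.e.\ a multiplication tree feeding a comultiplication tree reduces to a single $E$ with the handle number incremented. More generally, connecting the $k$ output-legs of one center to the $\ell$ input-legs of the next center via an arbitrary $\tau$-term produces a bipartite pattern whose connected components are exactly the new $E$-factors, and each closed loop formed contributes one unit to the relevant $m_i$ (handle count). I would prove this component-counting lemma by induction on the number of generators in the intervening $\tau$-term, peeling off one transposition at a time and invoking the Frobenius and (co)commutativity laws to reduce. Since \emph{this normal form is essentially the one in \cite[1.4.16]{K03}}, I would lean on the topological intuition there to organize the combinatorics, but keep the argument purely equational so that it lives in $\mathbf{K}$ itself rather than in $2CobS$.

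Once the component-fusion lemma is in hand, the three inductive cases close routinely, and the induction completes the proof that every term equals a special one. The remaining subtlety worth flagging is that one must be careful that the head $\chi$ and tail $\pi$ stay $\tau$-terms throughout: every time a $\tau$ is pushed through an $E$ using naturality, it may be replaced by a different $\tau$-term of the same source/target, which is fine since $\tau$-terms are closed under composition and correspond bijectively to permutations.
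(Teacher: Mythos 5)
Your overall strategy (structural induction on the formation of terms) is viable in principle, but the proposal has a genuine gap exactly at the point you yourself flag as the technical heart, and the sketch there does not close it. First, the sample lemma ``$V_{p-1}\circ\Lambda_{q-1}=E_{q,1,p}$'' does not typecheck: $\Lambda_{q-1}:1\str q$ and $V_{p-1}:p\str 1$, so the composite as written requires $q=p$, and even then the handle count is wrong --- repeated use of the Frobenius equations (\ref{fb}) gives $V_{p-1}\circ\Lambda_{p-1}=H_{p-1}=E_{1,p-1,1}$, i.e.\ \emph{each} of the $p-1$ extra connecting legs contributes a handle, not one handle in total (this is what Lemma~\ref{lem10} of the paper encodes, one leg at a time). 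Second, and more seriously, the proposed induction ``peeling off one transposition of the intervening $\tau$-term'' does not reduce the problem: the base case, where the permutation is the identity, already contains the entire difficulty, namely fusing an arbitrary tensor of $E$-factors with another one whose input/output blocks are not aligned --- an arbitrary forest of $\md$'s meeting a forest of $\ms$'s. Closing that requires the Frobenius-equation lemmas (cf.\ Lemmata~\ref{lem10}--\ref{lem13}) applied one generator at a time, i.e.\ slicing one of the centers into elementary factors; at that point you have replaced your transposition induction by a different one, so the component-counting lemma as proposed is not proved but merely restated. (Your topological picture --- connected components become $E$-factors, closed loops increment $m_i$ --- is the correct invariant, and indeed reappears as $\rho_K$ in the faithfulness section, but inside $\mathbf{K}$ it is a guide, not an argument.)

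The paper avoids two-center fusion altogether, and this is the structural difference you should note. By Lemma~\ref{lem1}, every term equals $f_n\circ\cdots\circ f_0$ with each $f_i$ of the form $\mj_l\otimes\beta\otimes\mj_r$, $\beta\in\{\tau,\ms,\es,\ed,\md\}$; the induction is then on $n$, so at every step only a \emph{single} generator has to be absorbed into an already-special term. Lemmata~\ref{lem3}, \ref{lem8} and~\ref{lem9} slide that generator past the tail $\tau$-term so that it meets either one $E$-factor (Lemmata~\ref{lem4}, \ref{lem5}, \ref{lem10}) or two adjacent ones (Lemmata~\ref{lem11}--\ref{lem13}); in particular the only global case, a $\md$ joining legs, splits into exactly the two local situations of handle creation versus factor merging. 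If you wish to keep your structural induction, the repair is to handle $g\circ f$ by applying Lemma~\ref{lem1}-style slicing to $g$ alone and absorbing its slices one by one into the special form of $f$ --- which is the paper's proof wrapped in an outer induction. Two minor points: in the tensor case the interchange law (\ref{fn}) suffices (no naturality sliding is needed), but when one factor is a $\tau$-term you must pad the other center with $\mj_1=E_{1,0,1}$ factors, since your definition of special term does not allow a center of length $0$ alongside a nonempty one.
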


We use the following lemmata in the proof of
Proposition~\ref{tv1}.

\begin{lem}\label{lem1}
Every term is equal to a term of the form $f_n\circ\ldots\circ
f_0$, $n\geq 0$, where every $f_i$ is of the form
$\mj_l\otimes\beta\otimes\mj_r$, for $l,r\geq 0$ and
$\beta\in\{\tau,\ms,\es,\ed,\md\}$.
\end{lem}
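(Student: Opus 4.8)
The plan is to prove Lemma~\ref{lem1} by structural induction on the term, using the associated notion of building up a term from the generators via $\circ$ and $\otimes$. The target form $f_n\circ\ldots\circ f_0$, in which each factor $f_i=\mj_l\otimes\beta\otimes\mj_r$ contains exactly one occurrence of a generator $\beta\in\{\tau,\ms,\es,\ed,\md\}$ flanked by identities, is precisely the statement that every term can be written as a composite of ``elementary layers,'' each of which applies a single generator in a single tensor slot. I would call such a composite a \emph{layered} term.

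First I would treat the base cases. A generator $\beta$ on its own (with the appropriate source and target) is already layered, being $\mj_0\otimes\beta\otimes\mj_0$, i.e.\ a composite of length one. The identity $\mj_n$ is layered as the empty composite (the case $n=0$ in the statement). The inductive steps correspond to the two ways of forming a new term from smaller ones. For composition, if $g$ and $h$ are already equal to layered terms $g_p\circ\ldots\circ g_0$ and $h_q\circ\ldots\circ h_0$, then $g\circ h$ is equal to the concatenation $g_p\circ\ldots\circ g_0\circ h_q\circ\ldots\circ h_0$, which is again layered; this uses only associativity of $\circ$ (the equations~(\ref{ct}) already adopted as silent).

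The substantive inductive step is the tensor product. Suppose $g$ equals a layered term with factors $g_i=\mj_{l_i}\otimes\beta_i\otimes\mj_{r_i}:a\str a'$ and $h$ equals a layered term with factors $h_j=\mj_{s_j}\otimes\gamma_j\otimes\mj_{t_j}:b\str b'$. I want to rewrite $g\otimes h$ as a single layered composite. The key tool is the bifunctoriality (interchange) law for $\otimes$, namely $(x\otimes y)\circ(x'\otimes y')=(x\circ x')\otimes(y\circ y')$ together with $\mj_a\otimes\mj_b=\mj_{a+b}$, which lets me insert identities and split $g\otimes h$ as the composite of $g\otimes\mj_b$ followed by, or preceded by, $\mj_{a'}\otimes h$. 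Concretely I would first observe $g\otimes h=(\mj_{a'}\otimes h)\circ(g\otimes\mj_b)$, then apply the induction hypothesis separately: each layer $g_i$ of $g$ becomes a layer $g_i\otimes\mj_b=\mj_{l_i}\otimes\beta_i\otimes\mj_{r_i+b}$ of the correct elementary shape, and each layer $h_j$ of $h$ becomes $\mj_{a'}\otimes h_j=\mj_{a'+s_j}\otimes\gamma_j\otimes\mj_{t_j}$, again elementary. Stacking these two layered composites gives a layered expression for $g\otimes h$.

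The main obstacle to watch is purely bookkeeping: one must verify that the padding identities $\mj_l$ and $\mj_r$ track correctly through composition, so that the sources and targets of successive layers actually match and the tensor-widths add up to $l_i+1+r_i+b$ and $a'+s_j+1+t_j$ consistently along the whole composite. This is routine once interchange and the additivity $\mj_a\otimes\mj_b=\mj_{a+b}$ are invoked, and it is exactly the sort of index-chasing the paper has already licensed us to suppress. No Frobenius, commutativity, or symmetry equations are needed here---the result holds in any strict monoidal category generated by these operations---so the proof rests solely on associativity of $\circ$, bifunctoriality of $\otimes$, and the unit laws for $\mj$.
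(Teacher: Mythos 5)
Your proof is correct and is essentially the paper's own argument: the paper proves this lemma in one line by citing exactly your two tools, the interchange decomposition $f_1\otimes f_2=(f_1\otimes\mj_{m_2})\circ(\mj_{n_1}\otimes f_2)$ and the padding law $(g\circ f)\otimes\mj_m=(g\otimes\mj_m)\circ(f\otimes\mj_m)$, both derived from (\ref{ct}) and (\ref{fn}), leaving the structural induction you spell out implicit. The only slip is calling $\mj_n$ the ``empty composite (the case $n=0$)''---with $n\geq 0$ the composite $f_n\circ\ldots\circ f_0$ has at least one factor, so a bare identity should instead be expressed as a single layer, e.g.\ $\mj_l\otimes\tau_{0,0}\otimes\mj_r$, using the derivable equation $\tau_{0,0}=\mj_0$.
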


\begin{proof}
By relying on the equations
\[
f_1\otm f_2=(f_1\otm\mj_{m_2})\circ (\mj_{n_1}\otm
f_2)\quad\mbox{\rm and}\quad (g\circ f)\otimes
\mj_m=(g\otimes\mj_m)\circ(f\otimes \mj_m),
\]
derived from (\ref{ct}) and (\ref{fn}).
\end{proof}

\begin{lem}\label{lem2}
For every permutation on $n$, there is a $\tau$-term $\pi:n\str n$
such that this permutation corresponds to $\pi$. If the
permutations corresponding to two $\tau$-terms are equal, then
these terms are equal in $\mathcal{K}$.
\end{lem}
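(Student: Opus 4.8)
The statement to prove is Lemma~\ref{lem2}: two parts concerning the correspondence between permutations on $n$ and $\tau$-terms.

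\textbf{Existence.} For the first part, the plan is to exhibit, for each permutation $\sigma$ on $n$, a $\tau$-term $\pi:n\str n$ realizing $\sigma$. The natural approach is to use the fact that the adjacent transpositions generate the full symmetric group. For $0\leq i<n-1$, the $\tau$-term $\mj_i\otimes\tau_{1,1}\otimes\mj_{n-i-2}$ corresponds to the transposition swapping $i$ and $i+1$. Given an arbitrary $\sigma$, I would write it as a product of such adjacent transpositions and take the composite of the corresponding $\tau$-terms; since composition of $\tau$-terms is again a $\tau$-term, this produces the required $\pi$. One should check that the permutation assigned to a composite $\tau$-term is the composite of the assigned permutations, which follows from the (already granted) well-definedness of the assignment ``$\tau$-term $\mapsto$ permutation'' stated just before Proposition~\ref{tv1}.

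\textbf{Uniqueness up to equality.} The second part is the substantive direction: if two $\tau$-terms induce the same permutation, they are already equal in $\mathbf{K}$. Here I would argue that any two $\tau$-terms representing the same permutation can be transformed into one another using only the symmetry axioms, i.e.\ the equations (\ref{st}), (\ref{ct}), (\ref{fn}), (\ref{nt}), (\ref{iv}) and (\ref{hx}) of Section~\ref{appendixK}. These are precisely the equations of a symmetric monoidal category restricted to $\tau$-terms, so the statement is an instance of Mac Lane's coherence theorem for symmetric monoidal categories: the free symmetric (strict) monoidal category on one generator has exactly the symmetric group $S_n$ as its endomorphism monoid of the object $n$, with the bijection being the very assignment under discussion. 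Concretely, I would first use Lemma~\ref{lem1} to bring each $\tau$-term into a normal form as a composite $f_k\circ\ldots\circ f_0$ of generators $\mj_l\otimes\tau\otimes\mj_r$ (only the $\tau$ case occurs, since the terms contain no $\md,\ed,\ms,\es$), reducing the problem to words in adjacent transpositions; then I would invoke the fact that any two reduced words for the same permutation are connected by the braid/Coxeter relations for $S_n$, which are exactly the consequences of the hexagon (\ref{hx}) and the self-inverse (\ref{iv}) and naturality (\ref{nt}) equations.

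\textbf{Main obstacle.} The hard part is the uniqueness direction, specifically verifying that the defining equations of the symmetric monoidal structure listed in Section~\ref{appendixK} suffice to derive the Coxeter relations---the commutation of distant transpositions (from the interchange law and functoriality of $\otimes$) and the braid relation $s_is_{i+1}s_i=s_{i+1}s_is_{i+1}$ (from the hexagon identity together with self-invertibility). Once these two families of relations are available, the classical fact that they present $S_n$ yields that equal permutations force equal $\tau$-terms. The cleanest route is to cite the coherence theorem for symmetric monoidal categories, which packages exactly this computation; I would present the generator-and-relation reduction and then appeal to coherence rather than grinding through each braid-move verification by hand.
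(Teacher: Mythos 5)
Your proposal is correct and matches the paper's approach: the paper proves Lemma~\ref{lem2} simply by citing Mac Lane's coherence theorem for symmetric monoidal categories (\cite{ML63}), which is exactly the appeal you make after your (sound, but strictly optional) sketch of the reduction to adjacent transpositions and Coxeter relations. Your extra detail---generating $S_n$ by adjacent transpositions via $\mj_i\otimes\tau_{1,1}\otimes\mj_{n-i-2}$ and deriving the braid and commutation relations from (\ref{hx}), (\ref{iv}), (\ref{nt}) and (\ref{fn})---is precisely the computation the coherence theorem packages, so nothing is missing.
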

\begin{proof}
By symmetric monoidal coherence (see \cite{ML63}).
\end{proof}
From now on, we identify a $\tau$-term with the corresponding
permutation.

\begin{lem}\label{lem3}
For every $\tau$-term $\pi:p\str p$ and every $l\in p$, there is a
$\tau$-term $\pi':p-1\str p-1$ such that for $j=\pi^{-1}(l)$,
$\pi$ is equal to
\[
(\tau_{1,l}\otm \mj_{p-l-1})\circ(\mj_1\otm \pi')\circ
(\tau_{j,1}\otm \mj_{p-j-1}).
\]
\end{lem}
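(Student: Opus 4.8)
The plan is to reduce the asserted identity, which is an equation between two $\tau$-terms, to a purely combinatorial statement about permutations, and then to exhibit the required $\pi'$ explicitly. By Lemma~\ref{lem2} it suffices to check that the two $\tau$-terms on the two sides of the displayed equation correspond to the same permutation, since $\tau$-terms inducing equal permutations are equal in $\mathcal{K}$. Accordingly, I identify each of the factors with its underlying permutation and write $\alpha=\tau_{j,1}\otimes\mj_{p-j-1}$ for the rightmost factor and $\gamma=\tau_{1,l}\otimes\mj_{p-l-1}$ for the leftmost one, so that the right-hand side is $\gamma\circ(\mj_1\otimes\pi')\circ\alpha$.

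First I would record the action of the two symmetries on positions. Reading $\tau_{n,m}$ as the block transposition that sends the first block of size $n$ past the following block of size $m$, one gets that $\alpha$ moves input position $j$ to position $0$ and shifts $0,\ldots,j-1$ up to $1,\ldots,j$, fixing everything from $j+1$ on; dually $\gamma$ moves position $0$ to position $l$ and shifts $1,\ldots,l$ down to $0,\ldots,l-1$, fixing everything from $l+1$ on. In particular $\alpha^{-1}(0)=j$ and $\gamma^{-1}(l)=0$. I then define $\pi'$ to be the permutation on $p-1$ induced by the composite $\mu=\gamma^{-1}\circ\pi\circ\alpha^{-1}$ on the positions $1,\ldots,p-1$, transported to $0,\ldots,p-2$ by the shift $x\mapsto x+1$; this is legitimate precisely if $\mu$ fixes position~$0$, so that $\mu$ restricts to a bijection of $\{1,\ldots,p-1\}$.

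The one computation that carries the argument is exactly this check that $\mu$ fixes $0$, and it is where the hypothesis $j=\pi^{-1}(l)$ is used:
\[
\mu(0)=\gamma^{-1}\bigl(\pi(\alpha^{-1}(0))\bigr)=\gamma^{-1}(\pi(j))=\gamma^{-1}(l)=0.
\]
Thus $\mu=\mj_1\otimes\pi'$ as permutations, whence $\pi=\gamma\circ(\mj_1\otimes\pi')\circ\alpha$ as permutations. Invoking Lemma~\ref{lem2} once more, both sides are $\tau$-terms inducing this common permutation, so the equation holds in $\mathcal{K}$, which is what was to be shown.

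I expect no genuine obstacle here beyond bookkeeping: the only real risk is a sign/direction slip in the conventions for $\tau_{1,l}$ and $\tau_{j,1}$ and in the order of composition, so the care must go into fixing once and for all that $\tau_{n,m}$ sends the first $n$ strands past the next $m$ and that $g\circ f$ means ``$f$ first''. With those conventions pinned down, the identity $\alpha^{-1}(0)=j$ together with $\gamma^{-1}(l)=0$ makes the key line above immediate, and existence of $\pi'$ (which is all the lemma claims) follows without any appeal to uniqueness.
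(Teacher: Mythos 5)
Your proposal is correct and takes essentially the same route as the paper: the paper's proof likewise considers the conjugate $(\tau_{l,1}\otm \mj_{p-l-1})\circ\pi\circ (\tau_{1,j}\otm \mj_{p-j-1})$ (your $\mu$, since these block transpositions are the inverses of your $\gamma$ and $\alpha$ by (\ref{iv})), observes that it fixes $0$ --- exactly your key computation using $\pi(j)=l$ --- and then invokes Lemma~\ref{lem2} to produce the $\tau$-term $\pi'$ and to conclude the equality in $\mathcal{K}$. Your explicit bookkeeping of the actions of $\tau_{j,1}$ and $\tau_{1,l}$ matches the paper's conventions, so there is no gap.
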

\begin{proof}
The permutation corresponding to
\[
(\tau_{l,1}\otm \mj_{p-l-1})\circ\pi\circ (\tau_{1,j}\otm
\mj_{p-j-1})
\]
has 0 as a fix point. Hence, by Lemma~\ref{lem2}, there is a
$\tau$-term $\pi'$ such that this permutation corresponds to
$\mj_1\otm \pi'$. By Lemma~\ref{lem2} and (\ref{iv}) this
concludes the proof.
\end{proof}

By relying on (\ref{fn}), (\ref{ca}) and (\ref{cu}), we obtain the
following two lemmata.

\begin{figure}[h!h!h!]
\centerline{\includegraphics[width=0.6 \textwidth]{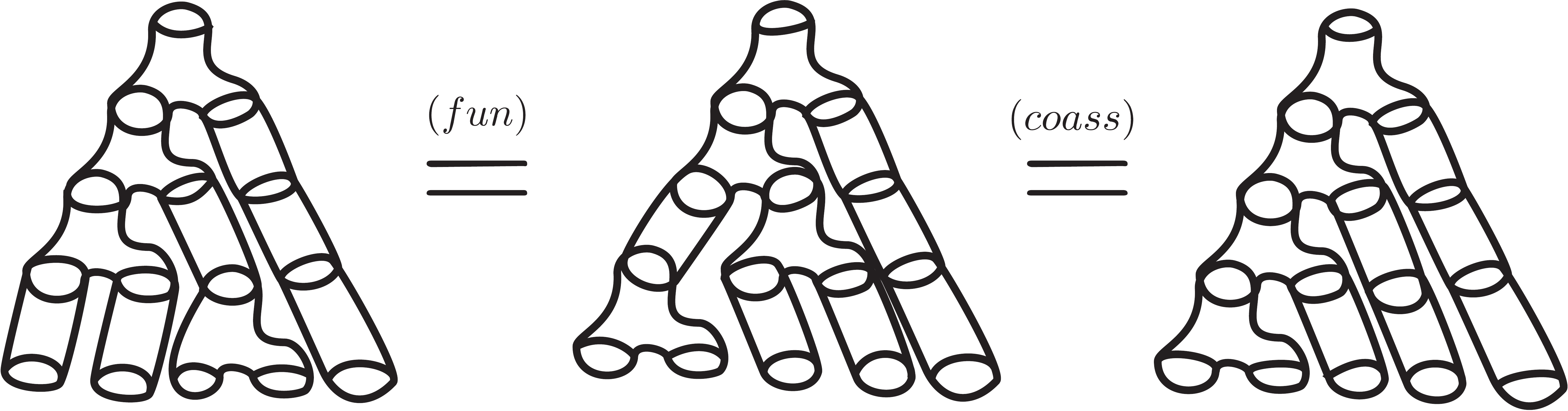}}
\caption{Lemma \ref{lem4}} \label{lemas5}
\end{figure}

\begin{lem}\label{lem4}
For $l+r=n\geq 0$, we have
$(\mj_l\otimes\ms\otimes\mj_r)\circ\Lambda_n=\Lambda_{n+1}$.

\end{lem}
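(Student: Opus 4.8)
We must prove Lemma~\ref{lem4}: for $l+r=n\geq 0$,
\[
(\mj_l\otimes\ms\otimes\mj_r)\circ\Lambda_n=\Lambda_{n+1}.
\]
Here $\Lambda_n:1\str n+1$ is the iterated comultiplication built from the comonoid structure, defined above as
\[
\Lambda_n=(\ms\otimes\mj_{n-1})\circ\ldots\circ(\ms\otimes\mj_1)\circ\ms.
\]
Intuitively, $\Lambda_n$ splits the single input into $n+1$ outputs by repeatedly comultiplying the leftmost output strand; the claim is that inserting one more comultiplication $\ms$ anywhere (on the $(l+1)$-th of the $n+1$ output strands) yields $\Lambda_{n+1}$, which splits into $n+2$ outputs. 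This is exactly the content one expects from coassociativity: all ways of iterating comultiplication agree.

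\textbf{Plan of proof.} The plan is to argue by induction on $l$, the number of identity strands to the left of the inserted $\ms$.

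\emph{Base case $l=0$.} Here the claim reads $(\ms\otimes\mj_r)\circ\Lambda_n=\Lambda_{n+1}$ with $r=n$, which is immediate from the definition of $\Lambda_{n+1}$, since $\Lambda_{n+1}=(\ms\otimes\mj_n)\circ\Lambda_n$ by reading off the outermost factor in the defining product.

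\emph{Inductive step.} Suppose the identity holds for $l-1$ (with the appropriate $n$); I would show it for $l\geq 1$. The key is to peel off the outermost factor $(\ms\otimes\mj_{n-1})$ from $\Lambda_n$, writing $\Lambda_n=(\ms\otimes\mj_{n-1})\circ\Lambda_{n-1}$. Composing on the left by $(\mj_l\otimes\ms\otimes\mj_r)$ and using bifunctoriality of $\otimes$ (equation~(\ref{fn})), the inserted $\ms$ at position $l$ and the outermost $\ms$ at position $0$ act on disjoint strands, so they commute past each other; this rewrites the composite as $(\ms\otimes\mj_n)\circ\bigl((\mj_{l-1}\otimes\ms\otimes\mj_r)\circ\Lambda_{n-1}\bigr)$. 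The inner composite is precisely the instance of the lemma for $l-1$ (with $n-1$ in place of $n$), so by the induction hypothesis it equals $\Lambda_n$. Hence the whole expression equals $(\ms\otimes\mj_n)\circ\Lambda_n=\Lambda_{n+1}$, closing the induction.

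\textbf{Main obstacle.} The only delicate point is the bookkeeping of the tensor indices when commuting the two comultiplications past each other: one must check that the middle factor of $\mj_l\otimes\ms\otimes\mj_r$ really lands on a strand disjoint from the one acted on by the outermost $\ms$ of $\Lambda_n$, so that the interchange law~(\ref{fn}) applies verbatim. Since $l\geq 1$, the inserted $\ms$ sits strictly to the right of the first output, and this is exactly what guarantees disjointness. The picture accompanying the statement (Figure~\ref{lemas5}) makes this geometric fact transparent: comultiplying further down a tree of comultiplications is independent of the order, which is the diagrammatic avatar of coassociativity~(\ref{ca}). I expect no genuine difficulty beyond this indexing, as the argument is a routine but careful manipulation of the defining expression using only~(\ref{fn}) and~(\ref{ca}).
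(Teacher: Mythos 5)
Your overall strategy---induction on $l$, peeling off the outermost factor via $\Lambda_n=(\ms\otimes\mj_{n-1})\circ\Lambda_{n-1}$---is sound, and it is a reasonable formalization of the paper's own terse justification, which simply cites (\ref{fn}) and (\ref{ca}) (with a picture). But your inductive step has a genuine gap at $l=1$. The outermost $\ms$ of $\Lambda_n$ occupies output strands $0$ \emph{and} $1$ of the intermediate object, so for $l=1$ the inserted $\ms$ acts on strand $1$, which is the second output of that same $\ms$: the two comultiplications do \emph{not} act on disjoint tensor factors. Concretely, by (\ref{fn}) alone you get
\[
(\mj_1\otimes\ms\otimes\mj_{n-1})\circ(\ms\otimes\mj_{n-1})=\bigl((\mj_1\otimes\ms)\circ\ms\bigr)\otimes\mj_{n-1},
\]
whereas your claimed rewrite equals $\bigl((\ms\otimes\mj_1)\circ\ms\bigr)\otimes\mj_{n-1}$; these agree precisely because of coassociativity (\ref{ca}), not by any interchange of disjoint strands. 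Your guard ``the inserted $\ms$ sits strictly to the right of the first output'' only ensures $l\geq 1$, while genuine disjointness requires $l\geq 2$.

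The gap is not cosmetic: as written, your induction invokes only (\ref{fn}), and no proof of Lemma~\ref{lem4} can avoid (\ref{ca}), since the instance $l=1$, $r=0$, $n=1$ of the lemma \emph{is} the coassociativity equation $(\mj_1\otimes\ms)\circ\ms=(\ms\otimes\mj_1)\circ\ms$. (You do mention (\ref{ca}) in your closing remarks as the ``conceptual content,'' but the formal step where it must actually be applied is exactly the one you justify by disjointness.) The repair is straightforward: split the inductive step into the case $l=1$, handled by (\ref{ca}) tensored with $\mj_{n-1}$ as above, and the case $l\geq 2$, where the commutation $(\mj_l\otimes\ms\otimes\mj_r)\circ(\ms\otimes\mj_{n-1})=(\ms\otimes\mj_n)\circ(\mj_{l-1}\otimes\ms\otimes\mj_r)$ does follow from (\ref{fn}) alone. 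With that case distinction your induction goes through and correctly fleshes out the paper's one-line appeal to (\ref{fn}) and (\ref{ca}).
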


\begin{lem}\label{lem5}
For $l+r=n\geq 0$, we have
$(\mj_l\!\otimes\es\!\otimes\!\mj_r)\circ\Lambda_n=\!\Lambda_{n-1}$.
\begin{figure}[h!h!h!]
\centerline{\includegraphics[width=0.8 \textwidth]{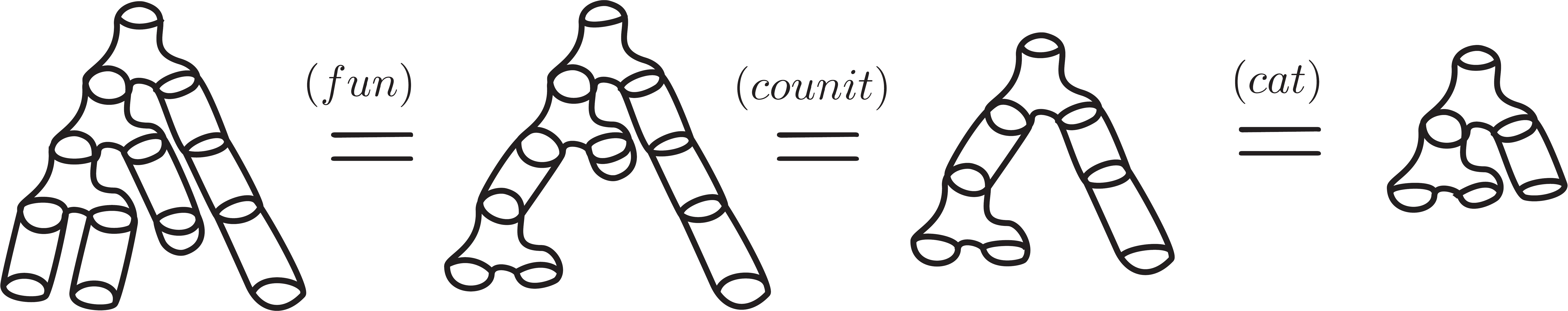}}
\caption{Lemma \ref{lem5}} \label{lemas6}
\end{figure}
\end{lem}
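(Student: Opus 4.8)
The plan is to deduce this from Lemma~\ref{lem4} together with the counit law (\ref{cu}), rather than to unwind the definition of $\Lambda_n$ directly. The key observation is that Lemma~\ref{lem4} can be read \emph{backwards}: for any $a+b=n-1$ one has $\Lambda_n=(\mj_a\otimes\ms\otimes\mj_b)\circ\Lambda_{n-1}$, so I am free to exhibit a fresh comultiplication $\ms$ inside $\Lambda_n$ at whatever position is convenient---in particular next to the strand on which $\es$ is meant to act. I first dispose of the base case $n=0$, where necessarily $l=r=0$: here $\Lambda_0=\mj_1$ and $\Lambda_{-1}=\es$, so the claim reduces to $\es\circ\mj_1=\es$.

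For $n\geq 1$ the equation $l+r=n$ forces $l\geq 1$ or $r\geq 1$, and I would split on this. If $r\geq 1$, I apply Lemma~\ref{lem4} with $a=l$ and $b=r-1$ to write $\Lambda_n=(\mj_l\otimes\ms\otimes\mj_{r-1})\circ\Lambda_{n-1}$, and then, decomposing $\mj_r=\mj_1\otimes\mj_{r-1}$ and localising the composite to its middle factor by interchange (\ref{fn}),
\[
(\mj_l\otimes\es\otimes\mj_r)\circ\Lambda_n=(\mj_l\otimes((\es\otimes\mj_1)\circ\ms)\otimes\mj_{r-1})\circ\Lambda_{n-1}.
\]
The counit law (\ref{cu}) collapses $(\es\otimes\mj_1)\circ\ms$ to $\mj_1$, leaving $\mj_n\circ\Lambda_{n-1}=\Lambda_{n-1}$. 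Symmetrically, if $l\geq 1$, I apply Lemma~\ref{lem4} with $a=l-1$ and $b=r$, so that $\es$ now lands on the second output of the inserted $\ms$, and I cancel the adjacent pair via the other form of the counit law, $(\mj_1\otimes\es)\circ\ms=\mj_1$.

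I do not expect any deep obstacle: the entire content is the bookkeeping of tensor indices and the choice, dictated by whether $l$ or $r$ is positive, of which of the two equations in (\ref{cu}) to invoke. The only point requiring a little care is to verify that Lemma~\ref{lem4}'s constraint $a+b=n-1$ can be met while keeping $\es$ on one leg of the freshly inserted $\ms$, which is precisely what $l\geq 1$ or $r\geq 1$ guarantees for $n\geq 1$; this is also what makes the two-case split exhaustive.
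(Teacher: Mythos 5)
Your proof is correct and takes essentially the same route as the paper: the paper derives Lemma~\ref{lem5} ``by relying on (\ref{fn}), (\ref{ca}) and (\ref{cu})'', and your argument uses exactly these ingredients, with (\ref{ca}) entering packaged as the generalized coassociativity of Lemma~\ref{lem4} and the counit law (\ref{cu}) performing the cancellation that the paper's figure depicts. Your index bookkeeping checks out ($a+b=n-1$ is attainable with $\es$ on a leg of the inserted $\ms$ precisely because $l\geq 1$ or $r\geq 1$ when $n\geq 1$), as does the base case $n=0$.
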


\begin{lem}\label{lem6}
For every $\tau$-term $\pi:l+r\str l+r$, we have
\[
(\mj_l\!\otimes\ed\otimes \mj_r)\circ \pi=
(\tau_{1,l}\otm\mj_r)\circ(\mj_1\otm\pi)\circ (\ed\otm\mj_{l+r}).
\]
\end{lem}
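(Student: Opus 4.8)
The plan is to prove the identity by pushing the unit $\ed$ from position $l$ on the left-hand side through the permutation $\pi$, using only the interchange law and the naturality of the symmetry. Reading compositions from right to left, the right-hand side first creates a strand at the front via $\ed\otm\mj_{l+r}$, then permutes the remaining $l+r$ strands by $\mj_1\otm\pi$, and finally slides the new strand past the first $l$ outputs by $\tau_{1,l}\otm\mj_r$; the claim is that this agrees with creating the strand directly at position $l$ after $\pi$ has acted.

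First I would commute $\pi$ past the strand creation. Writing the arrow $\ed\otm\pi$ and factoring it in two ways through the interchange law (\ref{fn})---once as $(\ed\otm\mj_{l+r})\circ(\mj_0\otm\pi)$ and once as $(\mj_1\otm\pi)\circ(\ed\otm\mj_{l+r})$---and using that $0$ is the monoidal unit so that $\mj_0\otm\pi=\pi$, I obtain
\[
(\mj_1\otm\pi)\circ(\ed\otm\mj_{l+r})=(\ed\otm\mj_{l+r})\circ\pi .
\]
Substituting this into the right-hand side reduces it to $(\tau_{1,l}\otm\mj_r)\circ(\ed\otm\mj_{l+r})\circ\pi$.

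It then remains to check that $(\tau_{1,l}\otm\mj_r)\circ(\ed\otm\mj_{l+r})=\mj_l\otm\ed\otm\mj_r$, which is exactly naturality of the symmetry for the arrow $\ed\colon 0\str 1$. Applying (\ref{nt}) to $\ed$ and $\mj_l$ gives $\tau_{1,l}\circ(\ed\otm\mj_l)=(\mj_l\otm\ed)\circ\tau_{0,l}$, and since $\tau_{0,l}=\mj_l$ (symmetry with the unit object is the identity, a consequence of symmetric monoidal coherence as used in Lemma~\ref{lem2}, or derivable from (\ref{hx}) and (\ref{iv})) this becomes $\tau_{1,l}\circ(\ed\otm\mj_l)=\mj_l\otm\ed$. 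Tensoring on the right with $\mj_r$ and using $\mj_l\otm\mj_r=\mj_{l+r}$ yields the required equality, and composing with $\pi$ on the right recovers the left-hand side $(\mj_l\otm\ed\otm\mj_r)\circ\pi$.

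I do not expect a genuine obstacle here: the only points requiring care are the bookkeeping of which tensor factors the interchange law (\ref{fn}) and naturality (\ref{nt}) are applied to, and the appeal to $\tau_{0,l}=\mj_l$. Everything else is a direct manipulation of the defining equations, so the proof is essentially a two-step rewriting.
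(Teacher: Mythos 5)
Your proof is correct and follows essentially the same route as the paper: the paper also works from right to left using the interchange law (\ref{fn}), naturality (\ref{nt}), and the auxiliary equation $\tau_{0,l}=\mj_l$ (which the paper derives from (\ref{iv}), (\ref{hx}) and (\ref{st}), matching your parenthetical remark). Your write-up merely spells out the two rewriting steps that the paper leaves implicit, so there is nothing to correct.
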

\begin{proof}
We prove this from right to left, by relying on (\ref{fn}),
(\ref{nt}) and the equation $\tau_{0,l}=\mj_l$, which is derivable
by (\ref{iv}), (\ref{hx}) and (\ref{st}) as follows
\begin{tabbing}
\hspace{1.5em}\= $\mj_l$ \= $=\tau_{l,0}\circ
\tau_{0,l}=\tau_{l,0}\circ\tau_{0+0,l}=\tau_{l,0}\circ(\tau_{0,l}\otm\mj_0)\circ
(\mj_0\otm\tau_{0,l})$
\\[1ex]
\> \> $=\tau_{l,0}\circ\tau_{0,l}\circ \tau_{0,l}= \tau_{0,l}$.
\end{tabbing}

\vspace{-3.5ex}
\end{proof}

\begin{figure}[h!h!h!]
\centerline{\includegraphics[width=0.8 \textwidth]{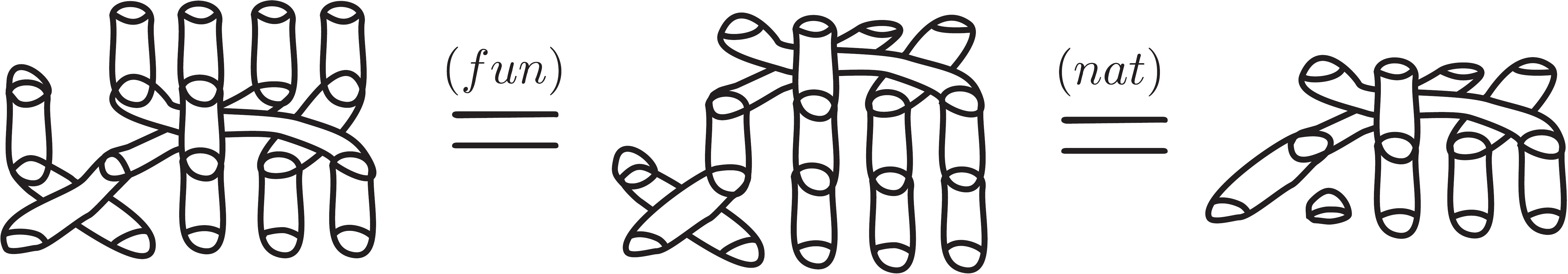}}
\caption{Lemma \ref{lem6}} \label{lemas7}
\end{figure}

By relying on (\ref{ca}), (\ref{as}), (\ref{cocm}), (\ref{cm}),
and the fact that every permutation is equal to a composition of
transpositions, we can prove the following.

\begin{lem}\label{lem7}
For every $\tau$-term $\pi:n+1\str n+1$, we have $\pi\circ
\Lambda_n= \Lambda_n$, and $V_n\circ \pi= V_n$.
\end{lem}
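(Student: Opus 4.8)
The plan is to prove both equalities by reducing to the case of a single transposition and then invoking the defining equations of a commutative Frobenius object. Since every permutation is a composition of transpositions, and since $\tau$-terms compose, it suffices to treat the case where $\pi$ is a single adjacent transposition, i.e.\ $\pi = \mj_i \otm \tau_{1,1}\otm \mj_{n-1-i}$ for some $0\leq i\leq n-1$. Indeed, once the statement holds for each such generator, an induction on the length of a decomposition of an arbitrary $\pi$ into adjacent transpositions yields the general case: if $\pi = \pi_2\circ\pi_1$ with $\pi_1$ an adjacent transposition, then $\pi\circ\Lambda_n = \pi_2\circ(\pi_1\circ\Lambda_n)=\pi_2\circ\Lambda_n=\Lambda_n$ by the inductive hypothesis, and dually for $V_n$.

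For the two claims I would argue separately but in parallel. Consider first $\pi\circ\Lambda_n=\Lambda_n$. Recall that $\Lambda_n=(\ms\otm\mj_{n-1})\circ\ldots\circ(\ms\otm\mj_1)\circ\ms$ is the iterated comultiplication. The key structural fact is cocommutativity, equation~(\ref{cocm}), which states $\tau_{M,M}\circ\ms=\ms$, together with coassociativity, equation~(\ref{ca}). I would first establish that $\Lambda_n$ is \emph{symmetric} in the sense that precomposing it (from the target side, i.e.\ composing $\pi\circ\Lambda_n$) with a transposition swapping two adjacent output strands leaves it unchanged. For the transposition of the first two outputs, this is immediate from $(\tau_{1,1}\otm\mj_{n-1})\circ(\ms\otm\mj_{n-1})\circ\Lambda_{n-1}=(\ms\otm\mj_{n-1})\circ\Lambda_{n-1}$ using~(\ref{cocm}) and naturality~(\ref{nt}). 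For a transposition of outputs deeper in the tree, I would use coassociativity to re-bracket $\Lambda_n$ so that the two strands being swapped emanate from a single application of $\ms$, reducing again to the cocommutativity equation. Dually, $V_n\circ\pi=V_n$ follows from commutativity~(\ref{cm}), $\md\circ\tau_{M,M}=\md$, and associativity~(\ref{as}), since $V_n$ is the iterated multiplication and postcomposing its inputs with a transposition reduces, after re-bracketing via associativity, to the commutativity equation.

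Concretely, the cleanest route is to prove the adjacent-transposition case by induction on $n$. For $V_n\circ\pi=V_n$ with $\pi$ swapping inputs $i$ and $i+1$: when $i=0$, we have $V_n\circ(\tau_{1,1}\otm\mj_{n-1})=\md\circ(V_{n-1}\otm\mj_1)\circ\ldots$, and the innermost swap is absorbed by $\md\circ\tau_{1,1}=\md$ after using associativity to expose the relevant binary multiplication; when $i\geq 1$, the transposition commutes past the outermost $\md$ via naturality~(\ref{nt}) and the bifunctoriality equation~(\ref{fn}), and I apply the inductive hypothesis to the shorter term $V_{n-1}$. The argument for $\Lambda_n$ is formally dual, reading all arrows backwards and exchanging~(\ref{cm})/(\ref{as}) for~(\ref{cocm})/(\ref{ca}).

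The main obstacle I anticipate is the bookkeeping in the re-bracketing step: to move a transposition of two non-adjacent-in-the-tree strands into position where a single commutativity/cocommutativity equation applies, one must interleave uses of associativity (resp.\ coassociativity) with the hexagon and naturality equations governing how $\tau$ interacts with $\otm$ and $\circ$. This is purely mechanical but error-prone, and it is precisely why the hypotheses of the lemma list~(\ref{ca}), (\ref{as}), (\ref{cocm}), (\ref{cm}) together with the remark that every permutation is a product of transpositions. I expect no genuine conceptual difficulty beyond organizing this induction carefully; the figures accompanying Lemmata~\ref{lem4}--\ref{lem6} suggest that the intended proof is a short diagrammatic verification of exactly this reduction.
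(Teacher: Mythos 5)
Your proposal is correct and matches the paper's intended argument exactly: the paper proves this lemma precisely ``by relying on (\ref{ca}), (\ref{as}), (\ref{cocm}), (\ref{cm}), and the fact that every permutation is equal to a composition of transpositions,'' i.e.\ the reduction to (adjacent) transpositions---licensed by Lemma~\ref{lem2}---followed by re-bracketing via (co)associativity so that a single application of (\ref{cm}) or (\ref{cocm}) absorbs the swap, which is what you carry out in detail. The only quibble is cosmetic: in the step $(\tau_{1,1}\otm\mj_{n-1})\circ(\ms\otm\mj_{n-1})=((\tau_{1,1}\circ\ms)\otm\mj_{n-1})$ the equation doing the work is bifunctoriality (\ref{fn}) rather than naturality (\ref{nt}).
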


\begin{figure}[h!h!h!]
\centerline{\includegraphics[width=0.6 \textwidth]{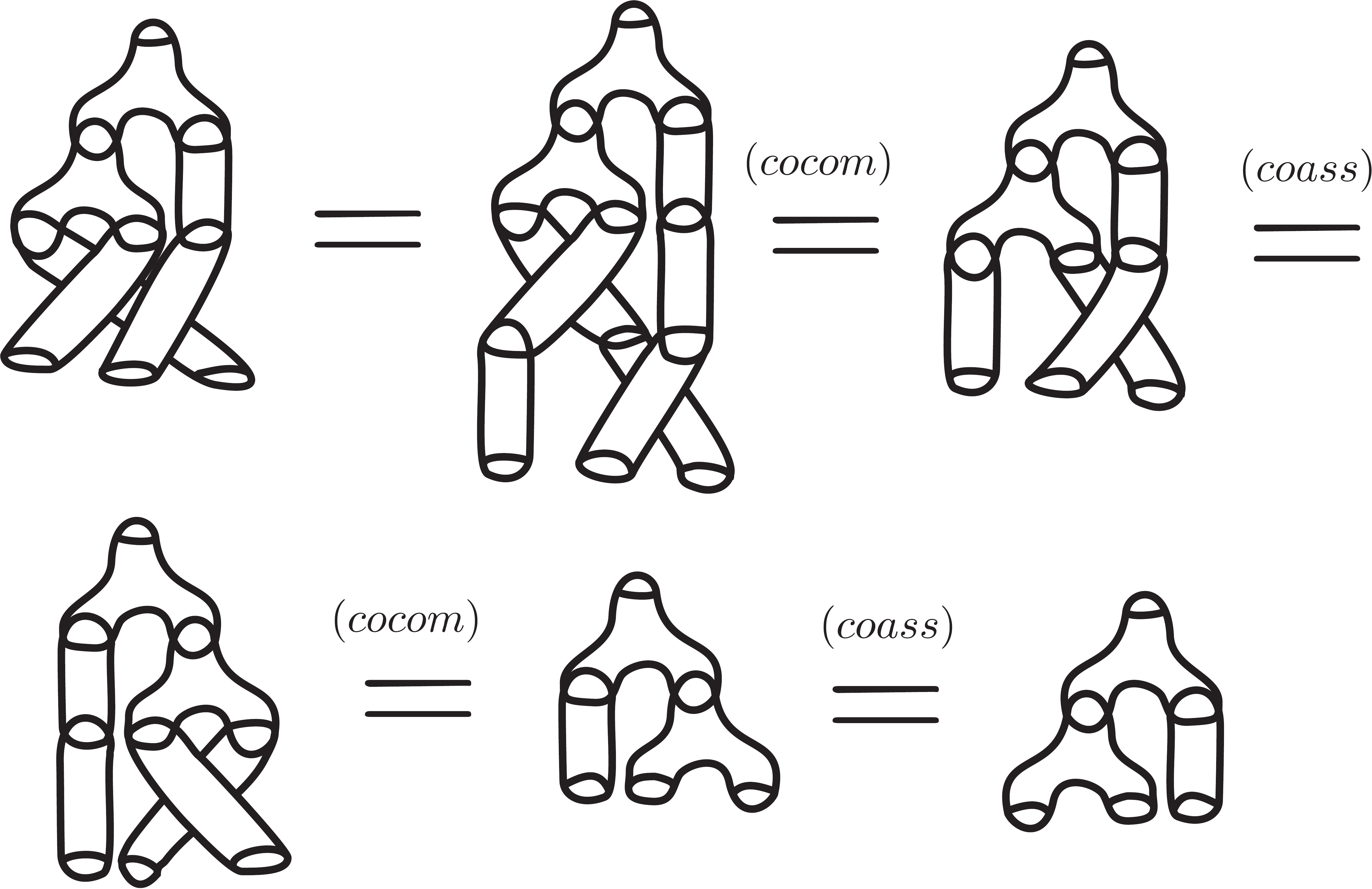}}
\caption{Lemma \ref{lem7}} \label{lemas8}
\end{figure}

For a $\tau$-term $\pi:p\str p$ with $p\geq2$, we say that
$l,l+1\in p$ are \emph{parallel} in $\pi$ when
$\pi^{-1}(l+1)=\pi^{-1}(l)+1$, i.e.\ for some $j\in p$, $\pi(j)=l$
and $\pi(j+1)=l+1$.

\begin{lem}\label{lem8}
For a special term $f$, which is not a $\tau$-term, with the
target $p\geq 2$, and every $l\in p-1$, there is a special term
equal to $f$, such that $l,l+1$ are parallel in its tail.
\end{lem}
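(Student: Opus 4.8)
The plan is to exploit the fact that the comultiplication trees $\Lambda_{p_i-1}$ occurring in the center of $f$ have freely interchangeable outputs (Lemma~\ref{lem7}), together with the naturality of $\tau$, in order to reshuffle the outgoing wires of the center until the two strands destined for the output positions $l$ and $l+1$ emerge from two adjacent positions in the correct order; the tail then routes them to $l$ and $l+1$, making these parallel.

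Concretely, write $f=\pi\circ C\circ\chi$ with head $\chi$, center $C=\bigotimes_{i=1}^k E_{p_i,m_i,n_i}$ and tail $\pi$, and recall $E_{p_i,m_i,n_i}=\Lambda_{p_i-1}\circ H_{m_i}\circ V_{n_i-1}$. The target $p$ of the center is partitioned into consecutive blocks, the $i$-th block consisting of the $p_i$ outputs of the factor $E_{p_i,m_i,n_i}$. Put $u=\pi^{-1}(l)$ and $v=\pi^{-1}(l+1)$; these are the two center-output positions that the tail sends to $l$ and $l+1$. The goal reduces to producing a permutation $\rho$ of the center outputs such that (i) $\rho\circ C=C'\circ\chi_\rho$ for some center $C'$ (a reordering of the blocks of $C$) and some $\tau$-term $\chi_\rho$, and (ii) $\rho(v)=\rho(u)+1$. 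Indeed, given such a $\rho$ we may rewrite $f=(\pi\circ\rho^{-1})\circ C'\circ(\chi_\rho\circ\chi)$, a special term whose tail $\pi'=\pi\circ\rho^{-1}$ satisfies $(\pi')^{-1}(l)=\rho(u)$ and $(\pi')^{-1}(l+1)=\rho(v)=\rho(u)+1$, i.e.\ $l,l+1$ are parallel in $\pi'$.

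Two elementary moves generate the admissible permutations $\rho$. First, by Lemma~\ref{lem7} any $\tau$-term $\rho_i$ on the $p_i$ outputs of a single block satisfies $\rho_i\circ\Lambda_{p_i-1}=\Lambda_{p_i-1}$, hence $\rho_i\circ E_{p_i,m_i,n_i}=E_{p_i,m_i,n_i}$; so an arbitrary permutation inside one block is admissible with $C'=C$ and $\chi_\rho=\mj$. Second, by naturality of $\tau$ we have $\tau_{p_i,p_j}\circ(E_{p_i,m_i,n_i}\otm E_{p_j,m_j,n_j})=(E_{p_j,m_j,n_j}\otm E_{p_i,m_i,n_i})\circ\tau_{n_i,n_j}$, so swapping two adjacent blocks is admissible, with $C'$ the reordered center and $\chi_\rho$ the induced transposition of the corresponding input groups absorbed into the head. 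Composing such moves, we may permute the blocks arbitrarily and permute freely within each block.

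It remains to choose $\rho$ achieving $\rho(v)=\rho(u)+1$. If $u$ and $v$ lie in the same block (which then has at least two outputs), a within-block permutation sends $u,v$ to consecutive positions $j,j+1$ of that block with $u\mapsto j$ and $v\mapsto j+1$. If $u$ and $v$ lie in distinct blocks, first reorder the blocks so that the block containing $u$ is immediately followed by the block containing $v$, and then apply within-block permutations sending $u$ to the last position of the former block and $v$ to the first position of the latter, so that they occupy consecutive positions across the block boundary. In either case $\rho(v)=\rho(u)+1$, which completes the argument. The main obstacle is the cross-block case: there the two strands cannot be adjusted by the block-internal symmetry of Lemma~\ref{lem7} alone, and one must first bring their blocks together by the naturality move, carefully tracking the induced input permutation into the head so that the resulting term is again special.
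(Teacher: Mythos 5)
Your proposal is correct and takes essentially the same route as the paper's proof: the same case split (the preimages $\pi^{-1}(l),\pi^{-1}(l+1)$ in one block of the center versus in two distinct blocks), with within-block reordering justified by Lemma~\ref{lem7} and adjacent-block swaps justified by naturality of $\tau$ (the paper's corollary $f_1\otm f_2=\tau_{m_2,m_1}\circ(f_2\otm f_1)\circ\tau_{n_1,n_2}$ of (\ref{nt}) and (\ref{iv})), absorbing the output-side permutation into the tail and the input-side one into the head. Your formulation in terms of ``admissible'' permutations $\rho$ with $\rho\circ C=C'\circ\chi_\rho$ merely makes explicit what the paper compresses into ``if necessary, a $\tau$-term could be added in between the tail and the center.''
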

\begin{proof}
Let $f$ be $\pi\circ \bigotimes_{i=1}^k E_{p_i,m_i,n_i} \circ
\chi$. If $l$ and $l+1$ are tied by $\pi$ to the target of one $E$
in the center of $f$, i.e. there is $j\in \{1,\ldots,k\}$ such
that
\[
\sum_{i=1}^{j-1} p_i\leq \pi^{-1}(l), \pi^{-1}(l+1)<
\sum_{i=1}^{j} p_i,
\]
then, if necessary, by Lemma~\ref{lem7} a $\tau$-term could be
added in between the tail and the center of $f$ in order to obtain
a new tail such that $l,l+1$ are parallel in it.

\begin{figure}[h!h!h!]
\centerline{\includegraphics[width=0.8
\textwidth]{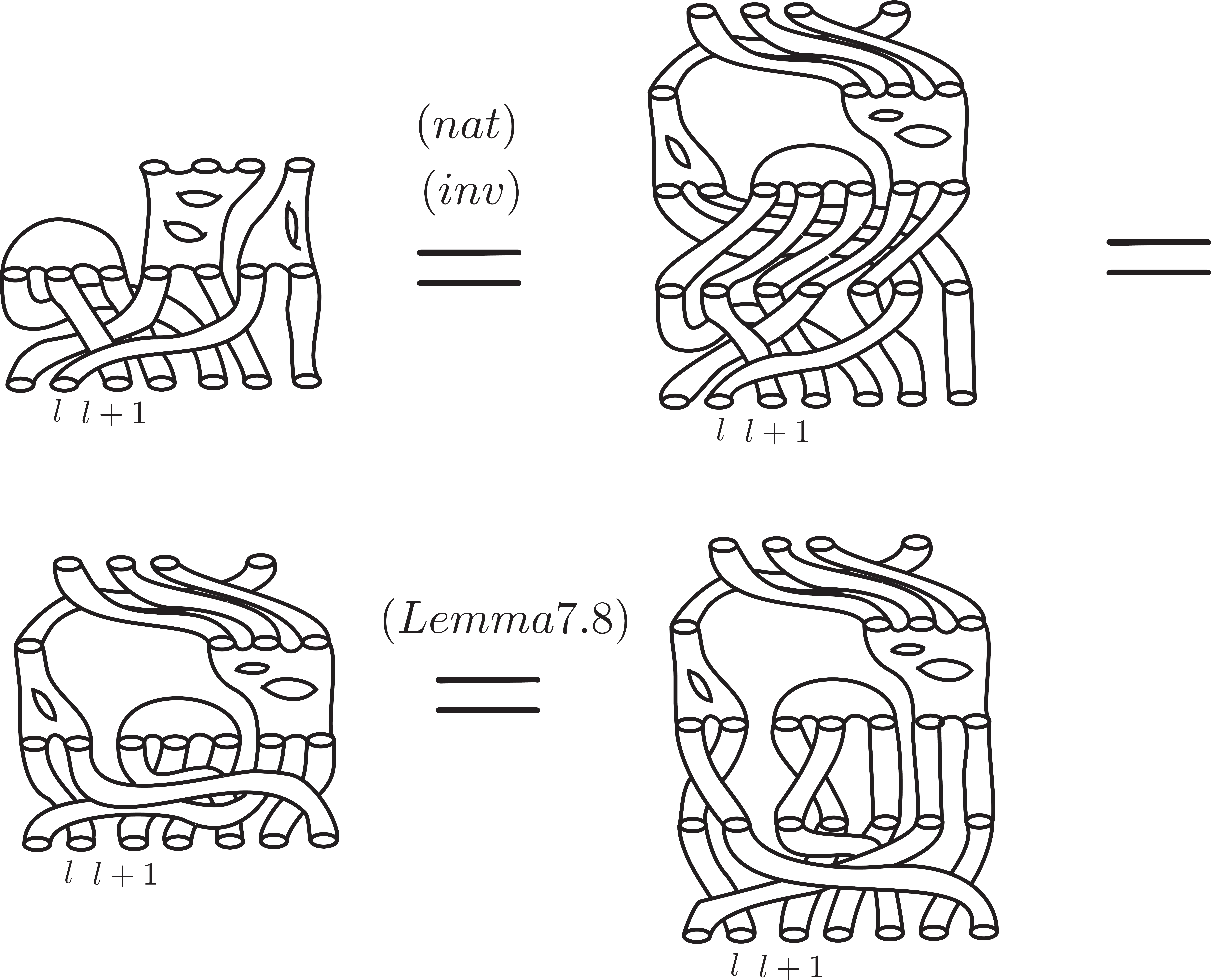}} \caption{Lemma \ref{lem8}}
\label{lemas9}
\end{figure}

If this is not the case, then by the following corollary of
(\ref{nt}) and (\ref{iv})
\[
f_1\otm f_2= \tau_{m_2,m_1}\circ(f_2\otm f_1)\circ\tau_{n_1,n_2},
\]
we may assume, without loss of generality, that there is $j\in
\{1,\ldots,k\}$ such that
\[
\sum_{i=1}^{j-1} p_i\leq \pi^{-1}(l)<\sum_{i=1}^{j} p_i\leq
\pi^{-1}(l+1)< \sum_{i=1}^{j+1} p_i.
\]
If necessary, by Lemma~\ref{lem7} a new $\tau$-term could be added
in between the tail and the center of $f$ in order to obtain a new
tail such that $l,l+1$ are parallel in it.
\end{proof}

The proof of the following lemma is akin to the proof of
Lemma~\ref{lem3}.

\begin{lem}\label{lem9}
For every $\tau$-term $\pi:p\str p$ and every $l\in p-1$ such that
$l,l+1$ are parallel in $\pi$, there is a $\tau$-term
$\pi':p-2\str p-2$ such that for $j=\pi^{-1}(l)$, $\pi$ is equal
to
\[
(\tau_{2,l}\otm \mj_{p-l-2})\circ(\mj_2\otm \pi')\circ
(\tau_{j,2}\otm \mj_{p-j-2}).
\]
\end{lem}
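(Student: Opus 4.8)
The plan is to mimic the structure of Lemma~\ref{lem3}, whose proof reduced a permutation with a chosen fixed point. Here, instead of isolating a single element $l$ at the front, I want to isolate the two parallel elements $l,l+1$ as a block that gets mapped to the front $\{0,1\}$ of the ordinal. First I would post-compose and pre-compose $\pi$ with suitable transpositions (built from $\tau$) so that the block $l,l+1$ is moved to the initial segment $\{0,1\}$, and the source block $j,j+1$ (where $j=\pi^{-1}(l)$) is likewise moved to the front. Concretely, I would consider the $\tau$-term
\[
(\tau_{l,2}\otm \mj_{p-l-2})\circ\pi\circ (\tau_{2,j}\otm \mj_{p-j-2})
\]
and argue that it fixes both $0$ and $1$ as a consequence of the parallelism hypothesis $\pi^{-1}(l+1)=\pi^{-1}(l)+1=j+1$.

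The key observation is that since $l,l+1$ are parallel, the permutation $\pi$ sends the consecutive pair $j,j+1$ to the consecutive pair $l,l+1$ order-preservingly. The leftmost factor $\tau_{l,2}\otm\mj_{p-l-2}$, read as a permutation, cyclically shifts the block $l,l+1$ down to positions $0,1$; the rightmost factor $\tau_{2,j}\otm\mj_{p-j-2}$ shifts positions $0,1$ up to $j,j+1$ on the source side. Composing, the two-element block is a common fixed block, so $0$ and $1$ are each fixed points of the resulting permutation. By Lemma~\ref{lem2}, a permutation fixing $0$ and $1$ is realized by a $\tau$-term of the form $\mj_2\otm\pi'$ for a unique $\pi':p-2\str p-2$, which defines the desired $\pi'$.

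Finally I would solve the resulting equation for $\pi$ by moving the outer $\tau$-terms to the other side, using that $\tau$ is self-inverse. From
\[
(\tau_{l,2}\otm \mj_{p-l-2})\circ\pi\circ (\tau_{2,j}\otm \mj_{p-j-2}) = \mj_2\otm\pi'
\]
I recover
\[
\pi = (\tau_{2,l}\otm \mj_{p-l-2})\circ(\mj_2\otm \pi')\circ (\tau_{j,2}\otm \mj_{p-j-2}),
\]
invoking Lemma~\ref{lem2} together with the involutivity equation (\ref{iv}) to justify that $(\tau_{l,2}\otm\mj_{p-l-2})$ and $(\tau_{2,l}\otm\mj_{p-l-2})$ compose to the identity. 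The main obstacle I anticipate is bookkeeping: verifying at the level of the underlying permutations that the chosen transpositions genuinely send $0,1$ to fixed points and that the index arithmetic in the $\tau$ subscripts (the splittings $p-l-2$, $p-j-2$) is consistent; but since all equalities between $\tau$-terms reduce to equalities of permutations via Lemma~\ref{lem2}, this is purely combinatorial and should go through exactly as in Lemma~\ref{lem3}.
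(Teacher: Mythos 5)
Your proposal is correct and is exactly the argument the paper intends: the paper gives no details for Lemma~\ref{lem9}, saying only that its proof is ``akin to the proof of Lemma~\ref{lem3}'', and your adaptation---showing that $(\tau_{l,2}\otm\mj_{p-l-2})\circ\pi\circ(\tau_{2,j}\otm\mj_{p-j-2})$ fixes $0$ and $1$ (using parallelism so the block is preserved order-preservingly), invoking Lemma~\ref{lem2} to write it as $\mj_2\otm\pi'$, and cancelling the outer factors via (\ref{iv})---is precisely that adaptation. The index bookkeeping you flag does check out: under the block-swap permutations, $0\mapsto j\mapsto l\mapsto 0$ and $1\mapsto j+1\mapsto l+1\mapsto 1$.
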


By Lemma~\ref{lem7} and the equations (\ref{nt}), (\ref{fn}) and
(\ref{fb}), we have the following.

\begin{lem}\label{lem10}
For $n\geq 1$,
$(\mj_l\otm\md\otm\mj_{n-l-1})\circ\Lambda_n=\Lambda_{n-1}\circ
H_1$.
\end{lem}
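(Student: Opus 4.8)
The plan is to reduce the general statement to the case $l=0$ and then to an auxiliary ``handle-slide'' identity proved by induction on $n$. Write $H_1=\md\circ\ms$. First I would observe that, by the naturality (\ref{nt}) of the symmetry together with (\ref{fn}), the arrow $\mj_l\otm\md\otm\mj_{n-l-1}:n\pl1\str n$ can be written as $\sigma\circ(\md\otm\mj_{n-1})\circ\pi$ for suitable $\tau$-terms (permutations) $\pi:n\pl1\str n\pl1$ and $\sigma:n\str n$ which merely move the two factors to be multiplied to the front. Composing with $\Lambda_n$ and using Lemma~\ref{lem7} twice (once to absorb $\pi$ into $\Lambda_n$ on the source side, once to absorb $\sigma$ into $\Lambda_{n-1}$ on the target side), the proof is reduced to the case $l=0$, namely
\[
(\md\otm\mj_{n-1})\circ\Lambda_n=\Lambda_{n-1}\circ H_1.
\]

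Next, using the defining recursion $\Lambda_n=(\ms\otm\mj_{n-1})\circ\Lambda_{n-1}$ and functoriality (\ref{fn}), the left-hand side becomes $((\md\circ\ms)\otm\mj_{n-1})\circ\Lambda_{n-1}=(H_1\otm\mj_{n-1})\circ\Lambda_{n-1}$. Hence everything comes down to the \emph{handle-slide} identity
\[
(H_1\otm\mj_k)\circ\Lambda_k=\Lambda_k\circ H_1,
\]
which I would prove by induction on $k$. The base case $k=1$, i.e.\ $(H_1\otm\mj_1)\circ\ms=\ms\circ H_1$, is the crux: starting from $\ms\circ H_1=\ms\circ\md\circ\ms$, one Frobenius equation (\ref{fb}) rewrites $\ms\circ\md$ as $(\md\otm\mj_1)\circ(\mj_1\otm\ms)$, and then coassociativity (\ref{ca}) --- equivalently Lemma~\ref{lem4} in the form $(\mj_1\otm\ms)\circ\ms=\Lambda_2=(\ms\otm\mj_1)\circ\ms$ --- turns the expression into $(\md\otm\mj_1)\circ(\ms\otm\mj_1)\circ\ms=(H_1\otm\mj_1)\circ\ms$. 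For the inductive step I would again use $\Lambda_{k+1}=(\ms\otm\mj_k)\circ\Lambda_k$ and (\ref{fn}) to split off a single $\ms$, apply the base case to push $H_1$ through it, and then invoke the induction hypothesis on $\Lambda_k$.

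The main obstacle is precisely this base case of the handle slide: it is the only place where the Frobenius equations (\ref{fb}) genuinely enter, expressing that the ``handle'' $H_1$ is central and may be transported freely across the comultiplications. Once this single identity is established, the rest is bookkeeping: functoriality (\ref{fn}) to recombine $\md\circ\ms$ into $H_1$ and to distribute tensors over composition, coassociativity (via Lemma~\ref{lem4}) to reassociate the iterated comultiplications, and Lemma~\ref{lem7} together with naturality (\ref{nt}) to dispose of the permutations arising from an arbitrary position $l$. Assembling these, the $l=0$ identity follows, and the reduction of the first paragraph then yields the statement for every $l$.
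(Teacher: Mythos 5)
Your proof is correct and takes essentially the same route as the paper, whose entire proof of Lemma~\ref{lem10} is the citation of precisely these ingredients---Lemma~\ref{lem7} together with (\ref{nt}), (\ref{fn}) and (\ref{fb}) (with the coassociativity step (\ref{ca}) you use being implicit in Lemma~\ref{lem4})---accompanied only by a picture. Your explicit reduction to $l=0$ and induction on the handle-slide identity just supplies the details the paper leaves to the figure.
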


\begin{figure}[h!h!h!]
\centerline{\includegraphics[width=0.8 \textwidth]{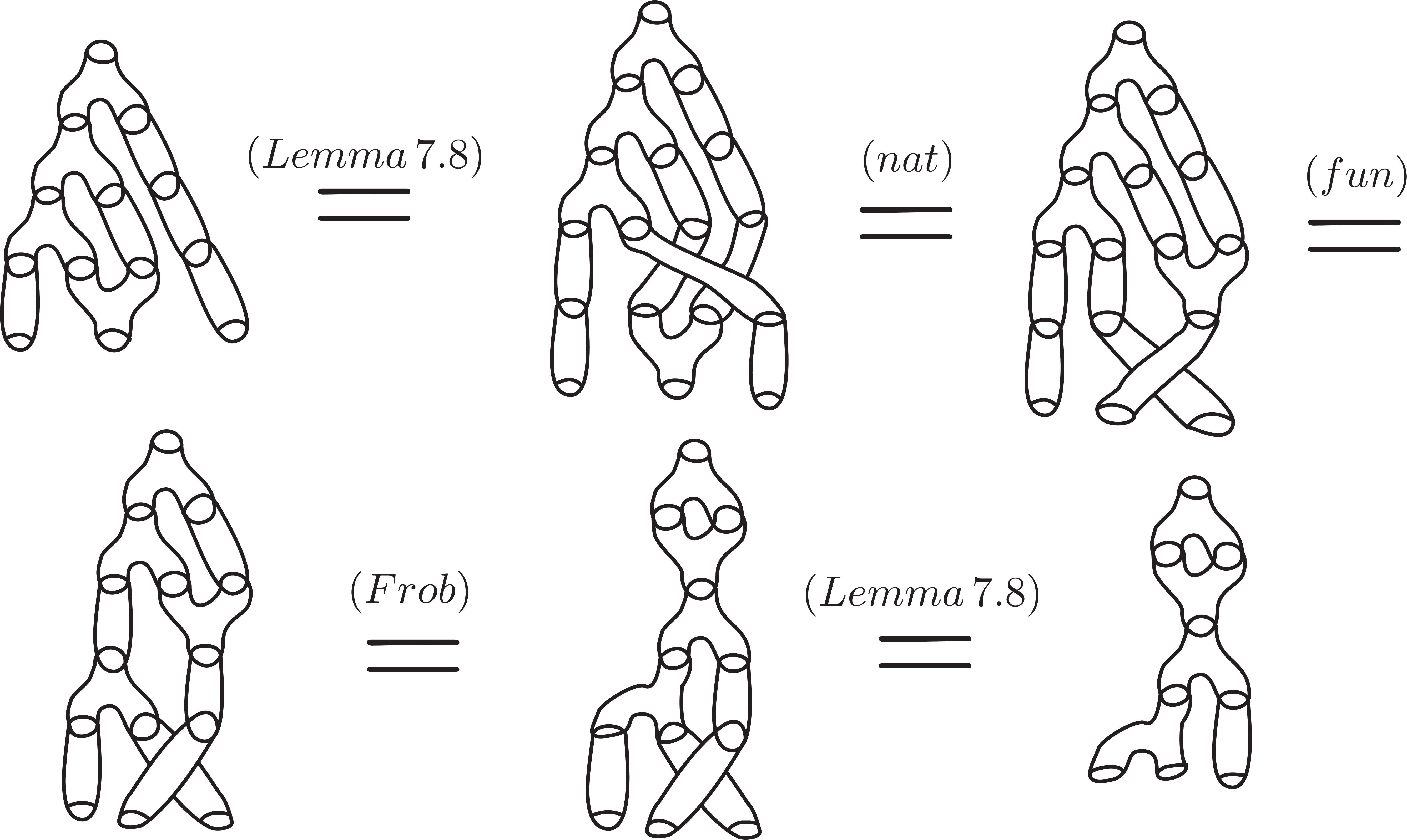}}
\caption{Lemma \ref{lem10}} \label{lemas11}
\end{figure}

By the equations (\ref{fn}) and (\ref{fb}), we have the following.

\begin{lem}\label{lem11}
For $n,m\geq 0$, $(\mj_n\otm\md\otm\mj_m)\circ(\Lambda_n\otm
\Lambda_m)=\Lambda_{n+m}\circ \md$.
\end{lem}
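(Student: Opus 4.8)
The plan is to read both sides geometrically and then push one arrow past the others. The term $\Lambda_n\otm\Lambda_m$ comultiplies the two inputs into trees with $n+1$ and $m+1$ leaves, and $\mj_n\otm\md\otm\mj_m$ multiplies the last leaf of the first tree with the first leaf of the second; the right-hand side first multiplies the two inputs and then comultiplies the result into $n+m+1$ leaves. So the identity says that the central $\md$ can be pulled up through the two comultiplication trees, merging them into a single $\Lambda_{n+m}$ sitting over one $\md$, and each time $\md$ passes a $\ms$ it is exactly a Frobenius move. The tools I would use are the interchange law (\ref{fn}) (to regroup $\otm$ and $\circ$), the Frobenius equations (\ref{fb}), and the elementary coassociativity consequences of (\ref{ca}) that govern $\Lambda$, namely $\Lambda_k=(\mj_1\otm\Lambda_{k-1})\circ\ms=(\Lambda_{k-1}\otm\mj_1)\circ\ms$ for $k\geq 1$ and, more generally, $(\Lambda_a\otm\Lambda_b)\circ\ms=\Lambda_{a+b+1}$; these are the same manipulations already used for $\Lambda$ in Lemmas~\ref{lem4}, \ref{lem5} and \ref{lem10}.

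First I would dispose of the base instance $n=0$, that is $(\md\otm\mj_m)\circ(\mj_1\otm\Lambda_m)=\Lambda_m\circ\md$. Writing $\Lambda_m=(\mj_1\otm\Lambda_{m-1})\circ\ms$ and applying (\ref{fn}) twice turns the left-hand side into $(\mj_1\otm\Lambda_{m-1})\circ(\md\otm\mj_1)\circ(\mj_1\otm\ms)$; the inner composite $(\md\otm\mj_1)\circ(\mj_1\otm\ms)$ collapses to $\ms\circ\md$ by the Frobenius equation (\ref{fb}), and reassembling $\Lambda_m$ via (\ref{ca}) gives $\Lambda_m\circ\md$.

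For general $n\geq 1$ I would substitute $\Lambda_n=(\Lambda_{n-1}\otm\mj_1)\circ\ms$, so that the leaf at the junction is the fresh output of a single $\ms$. Regrouping with (\ref{fn}), the left-hand side becomes $(\Lambda_{n-1}\otm\Lambda_m)\circ(\mj_1\otm\md)\circ(\ms\otm\mj_1)$, where along the way the block $(\md\otm\mj_m)\circ(\mj_1\otm\Lambda_m)$ is rewritten by the already-proved $n=0$ case as $\Lambda_m\circ\md$. Finally the Frobenius move $(\mj_1\otm\md)\circ(\ms\otm\mj_1)=\ms\circ\md$ from (\ref{fb}) (the mirror of the one used in the base case) together with the coassociativity identity $(\Lambda_{n-1}\otm\Lambda_m)\circ\ms=\Lambda_{n+m}$ produces $\Lambda_{n+m}\circ\md$, as required. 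Note that no induction hypothesis beyond the $n=0$ instance is needed: every case $n\geq 1$ reduces directly to it.

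The main obstacle is purely bookkeeping: keeping track of which tensor factor each $\mj$, $\md$ and $\ms$ acts on while regrouping with (\ref{fn}), and in particular aligning the two tensor blocks so that the single Frobenius instance $(\mj_1\otm\md)\circ(\ms\otm\mj_1)=\ms\circ\md$ (equivalently its mirror image) is exactly what sits at the junction of the two trees. Once this regrouping is set up correctly, each individual step is a direct application of (\ref{fn}), (\ref{fb}) or (\ref{ca}).
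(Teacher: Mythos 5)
Your proof is correct and follows essentially the same route as the paper, whose proof consists of citing (\ref{fn}) and (\ref{fb}) together with a picture showing exactly the single Frobenius move at the junction of the two comultiplication trees that you carry out algebraically. The only (harmless) difference is your additional reliance on (\ref{ca}) to re-bracket $\Lambda_n$ as $(\Lambda_{n-1}\otm\mj_1)\circ\ms$ and to obtain $(\Lambda_{n-1}\otm\Lambda_m)\circ\ms=\Lambda_{n+m}$; one can avoid (\ref{ca}) altogether, as the paper's citation suggests, by peeling off comultiplications via the definitional decomposition $\Lambda_k=(\ms\otm\mj_{k-1})\circ\Lambda_{k-1}$ and inducting, so that each step uses only (\ref{fn}) and one instance of (\ref{fb}).
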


\begin{figure}[h!h!h!]
\centerline{\includegraphics[width=0.75
\textwidth]{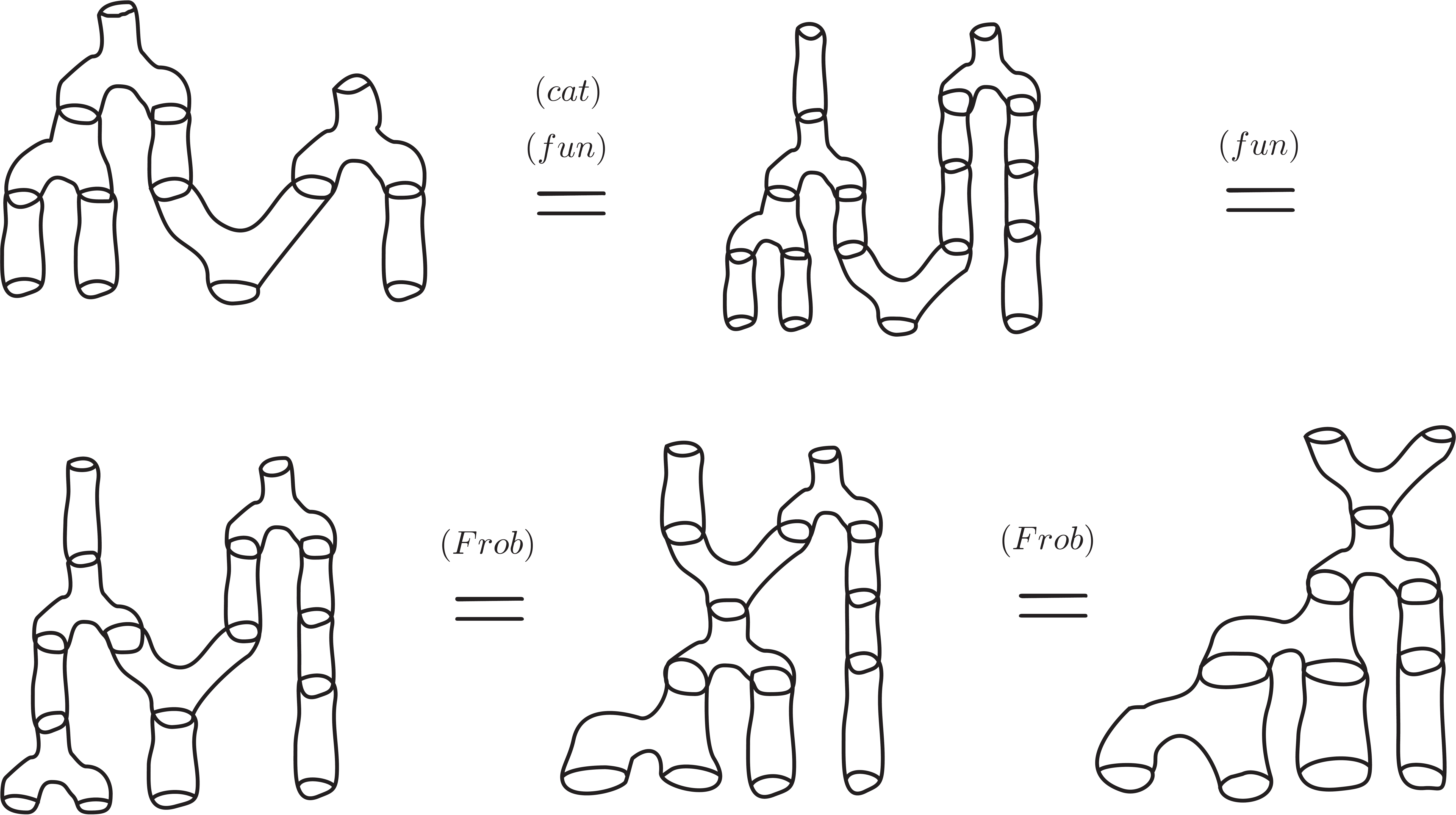}} \caption{Lemma \ref{lem11}}
\label{lemas12}
\end{figure}

By the equations (\ref{fn}), (\ref{as}) and (\ref{fb}), we have
the following.
\begin{lem}\label{lem12}
For $n,m\geq 0$, $\md\circ(H_n\otm H_m)=H_{n+m}\circ \md$.
\end{lem}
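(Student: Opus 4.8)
The plan is to reduce everything to a single \emph{sliding} identity for the one-handle operator $H_1=\md\circ\ms$, namely
\[
\md\circ(H_1\otm\mj_1)=H_1\circ\md=\md\circ(\mj_1\otm H_1),
\]
and then to bootstrap the general statement by induction. The two outer equalities are symmetric, each coming from one half of the Frobenius equations. For the left-hand one I would expand $H_1\otm\mj_1=(\md\otm\mj_1)\circ(\ms\otm\mj_1)$ by (\ref{fn}), rewrite $\md\circ(\md\otm\mj_1)=\md\circ(\mj_1\otm\md)$ using associativity (\ref{as}), and then apply (\ref{fb}) in the form $(\mj_1\otm\md)\circ(\ms\otm\mj_1)=\ms\circ\md$; what remains is $\md\circ\ms\circ\md=H_1\circ\md$. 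The right-hand equality is obtained the same way, expanding $\mj_1\otm H_1$, using (\ref{as}) to turn $\md\circ(\mj_1\otm\md)$ into $\md\circ(\md\otm\mj_1)$, and then using the other half of (\ref{fb}), namely $(\md\otm\mj_1)\circ(\mj_1\otm\ms)=\ms\circ\md$.

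With this identity in hand I would record the bookkeeping facts that $H_{k+1}=H_1\circ H_k$ (since $H_k$ is by definition the $k$-fold composite of $\md\circ\ms$) and that, by functoriality (\ref{fn}), tensoring respects such composites. The main statement then falls out of a short double induction. For the base case $n=0$ I would induct on $m$: the case $m=0$ is immediate since $\md\circ(\mj_1\otm\mj_1)=\md=H_0\circ\md$, and the step uses $\mj_1\otm H_{m+1}=(\mj_1\otm H_1)\circ(\mj_1\otm H_m)$ from (\ref{fn}) together with the right sliding identity to peel off one handle, yielding $\md\circ(\mj_1\otm H_{m+1})=H_1\circ\md\circ(\mj_1\otm H_m)=H_{m+1}\circ\md$.

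For the inductive step on $n$ I would write $H_{n+1}\otm H_m=(H_1\otm\mj_1)\circ(H_n\otm H_m)$ via (\ref{fn}), apply the left sliding identity to move $H_1$ across $\md$, and then invoke the induction hypothesis $\md\circ(H_n\otm H_m)=H_{n+m}\circ\md$, arriving at $H_1\circ H_{n+m}\circ\md=H_{n+1+m}\circ\md$. I do not expect any genuine obstacle here: the only delicate point is the very first sliding identity, where one must line up the sources and targets so that exactly the stated form of the Frobenius equation applies. Once that is pinned down, the remaining manipulations are routine rewritings with (\ref{fn}), (\ref{as}) and (\ref{fb}), which is precisely the toolkit the lemma advertises.
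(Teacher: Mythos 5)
Your proof is correct and takes essentially the same route as the paper, which gives no explicit derivation but simply cites (\ref{fn}), (\ref{as}) and (\ref{fb}) together with a picture of a handle sliding across the multiplication---precisely your identity $\md\circ(H_1\otm\mj_1)=H_1\circ\md=\md\circ(\mj_1\otm H_1)$. Your double induction on $n$ and $m$ is just the routine bookkeeping the paper leaves implicit, and all the steps (the (\ref{fn}) expansions, the use of (\ref{as}) to flip the association, and the two halves of (\ref{fb})) check out with sources and targets correctly aligned.
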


\begin{figure}[h!h!h!]
\centerline{\includegraphics[width=0.8
\textwidth]{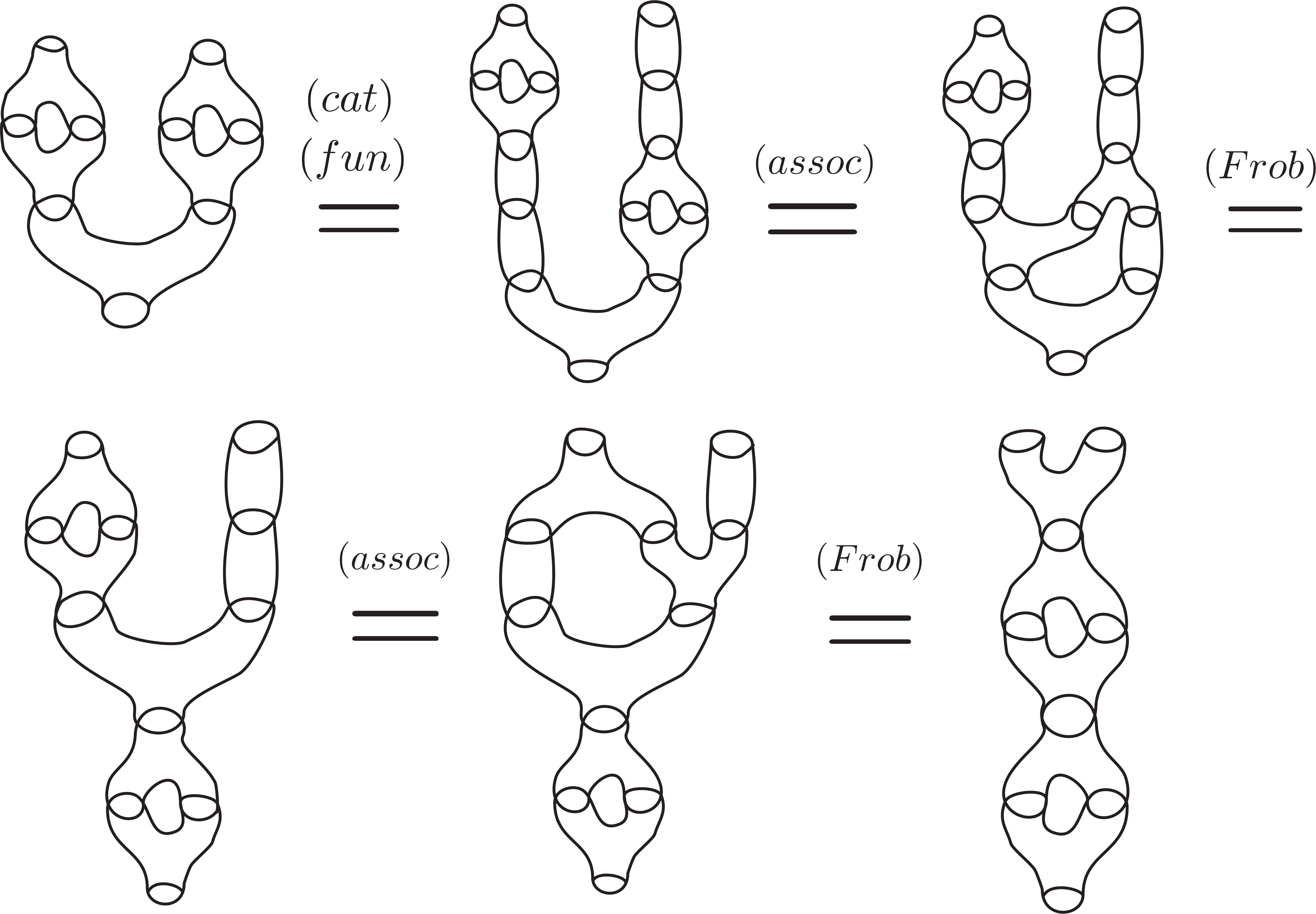}} \caption{Lemma \ref{lem12}}
\label{lemas13}
\end{figure}

By the equations (\ref{fn}), (\ref{as}) or (\ref{un}), we have the
following.

\begin{lem}\label{lem13}
For $n,m\geq -1$, $\md\circ(V_n\otm V_m)=V_{n+m+1}$.
\end{lem}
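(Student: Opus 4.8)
The plan is to reduce the statement to two basic recursions for $V$ together with a single ``shift'' identity, after first disposing of the degenerate cases $n=-1$ and $m=-1$ by the unit equations. Throughout I would work with the strict monoidal structure, so that $\mj_0$ is a genuine identity for $\otm$ and brackets may be dropped freely.

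First I would record the two recursions $V_{n+1}=\md\circ(V_n\otm\mj_1)$ and $V_{n+1}=\md\circ(\mj_1\otm V_n)$, valid for $n\geq 0$. The first is essentially the definition of $V_{n+1}$: factoring the rightmost $\mj_1$ out of each tensor factor by (\ref{fn}) together with strict associativity of $\otm$ rewrites $(\md\otm\mj_1)\circ\ldots\circ(\md\otm\mj_n)$ as $V_n\otm\mj_1$. The second I would then prove by induction on $n$ from the first, the associativity equation (\ref{as}) being exactly what is needed to move a $\md$ from the outer slot to the inner one.

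Next I would establish the shift identity $\md\circ(V_{n+1}\otm V_m)=\md\circ(V_n\otm V_{m+1})$ for $n,m\geq 0$. Starting from the left-hand side I would rewrite $V_{n+1}$ by the first recursion, distribute $\otm$ over $\circ$ by (\ref{fn}), and isolate the factor $\md\circ(\md\otm\mj_1)$; applying (\ref{as}) to turn this into $\md\circ(\mj_1\otm\md)$ and then re-collecting by (\ref{fn}) and the second recursion produces $\md\circ(\mj_1\otm V_{m+1})$, whence a final use of (\ref{fn}) yields the right-hand side. With the shift identity in hand, the case $n,m\geq 0$ follows by moving all units to the left: iterating the shift $m$ times gives $\md\circ(V_n\otm V_m)=\md\circ(V_{n+m}\otm V_0)$, and since $V_0=\mj_1$ the first recursion produces $V_{n+m+1}$.

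Finally, for the boundary cases I would invoke the unit equations (\ref{un}). Since $V_{-1}=\ed$, the identity $\ed\otm V_m=(\ed\otm\mj_1)\circ V_m$ obtained from (\ref{fn}), combined with $\md\circ(\ed\otm\mj_1)=\mj_1$, gives $\md\circ(V_{-1}\otm V_m)=V_m=V_{-1+m+1}$, and symmetrically on the right using $\md\circ(\mj_1\otm\ed)=\mj_1$; the doubly degenerate case $n=m=-1$ reduces to $\md\circ(\ed\otm\ed)=\ed$ by the same unit law. The one step carrying real content is the shift identity, which is precisely where associativity of the iterated multiplication is used; everything else is routine bookkeeping with (\ref{fn}) and strictness of $\otm$.
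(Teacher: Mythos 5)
Your proof is correct and follows essentially the same route as the paper, which justifies this lemma only by citing the equations (\ref{fn}), (\ref{as}) and (\ref{un}) together with illustrative pictures. Your recursions $V_{n+1}=\md\circ(V_n\otm\mj_1)$ and $V_{n+1}=\md\circ(\mj_1\otm V_n)$, the shift identity, and the unit-law treatment of the cases $n=-1$ and $m=-1$ are exactly a careful fleshing-out of that sketch, with no gaps.
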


\begin{figure}[h!h!h!]
\centerline{\includegraphics[width=0.8
\textwidth]{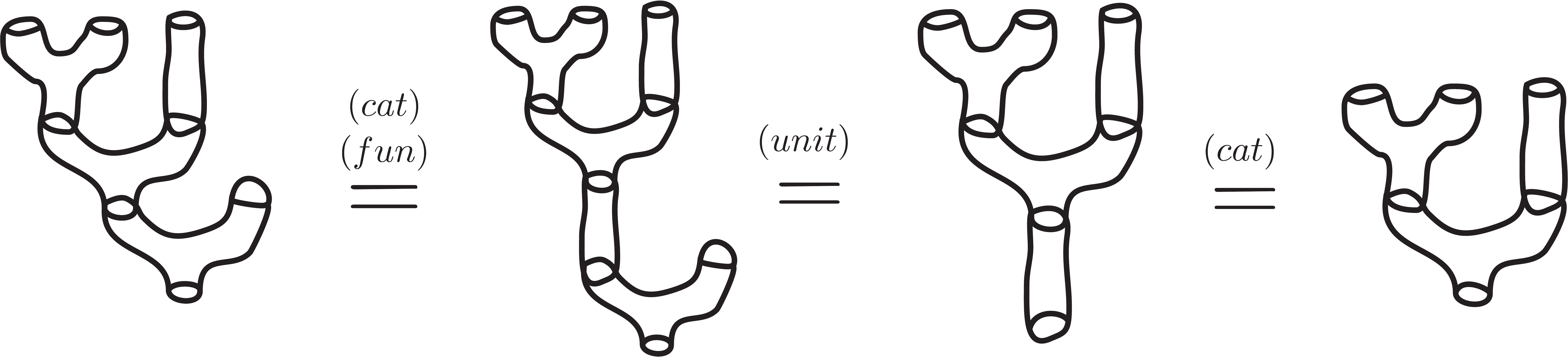}} \caption{Lemma \ref{lem13}, $m=-1$}
\label{lemas14-1}
\end{figure}

\begin{figure}[h!h!h!]
\centerline{\includegraphics[width=0.8
\textwidth]{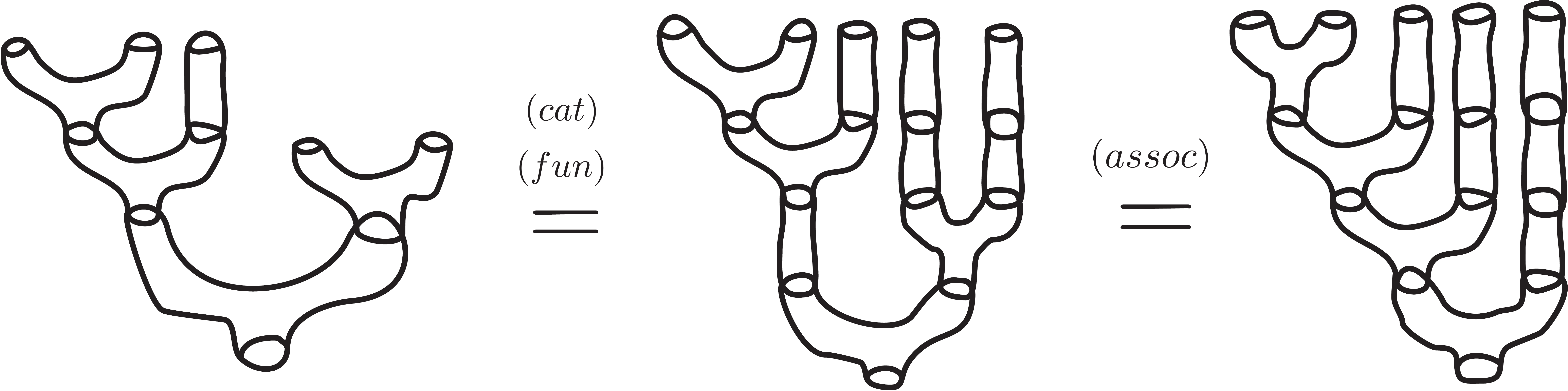}} \caption{Lemma \ref{lem13}}
\label{lemas14}
\end{figure}

\vspace{2ex}

\begin{proof}[Proof of Proposition~\ref{tv1}]
Let $f$ be a term. By Lemma~\ref{lem1}, $f$ is equal to a term of
the form $f_n\circ\ldots\circ f_0$, where every $f_i$ is of the
form $\mj_l\otimes\beta\otimes\mj_r$, for $l,r\geq 0$ and
$\beta\in\{\tau,\ms,\es,\ed,\md\}$. We proceed by induction on
$n\geq 0$. (The indices of identities not important for our
calculations are usually omitted.)

\vspace{2ex}

If $n=0$, then since $\mj_l\otimes\beta\otimes\mj_r$ is special,
we are done.

\vspace{2ex}

If $n>0$, then by the induction hypothesis,
$f_{n-1}\circ\ldots\circ f_0$ is equal to a term of the form
$\pi\circ \bigotimes_{i=1}^k E_{p_i,m_i,n_i} \circ \chi$. We have
the following cases concerning $f_n$.

\vspace{1ex}

If $f_n$ is $\mj_l\otimes\tau\otimes\mj_r$, then $f_n\circ \pi$ is
a $\tau$-term and we are done.

\vspace{1ex}

If $f_n$ is $\mj_l\otimes\ms\otimes\mj_r$, then by
Lemma~\ref{lem3}, we have a $\tau$-term $\pi'$ such that
\begin{tabbing}
\hspace{1.5em}\= $f_n\circ \pi$ \= $=(\mj_l\otm\ms\otm\mj)\circ
(\tau_{1,l}\otm \mj)\circ(\mj_1\otm \pi')\circ (\tau_{j,1}\otm
\mj)$
\\[1ex]
\> \>
$=(\tau_{2,l}\otm\mj)\circ(\ms\otm\mj)\circ(\mj_1\otm\pi')\circ
(\tau_{j,1}\otm \mj)$ \`(\ref{nt})
\\[1ex]
\> \> $=(\tau_{2,l}\otm\mj)\circ(\mj_2\otm\pi')\circ
(\ms\otm\mj)\circ (\tau_{j,1}\otm \mj)$ \`(\ref{ct}), (\ref{fn})
\\[1ex]
\> \> $=(\tau_{2,l}\otm\mj)\circ(\mj_2\otm\pi')\circ
(\tau_{j,2}\otm \mj)\circ (\mj_{j}\otm\ms\otm\mj)$ \`(\ref{nt})
\end{tabbing}
Then for some $u\in k$, by (\ref{fn}),
$(\mj_{j}\otm\ms\otm\mj)\circ \bigotimes_{i=1}^k E_{p_i,m_i,n_i}$
is equal to
\[
(\mj\otm ((\mj\otm\ms\otm\mj)\circ E_{p_u,m_u,n_u})\otm\mj)\circ
(\bigotimes_{i=1}^{u-1} E_{p_i,m_i,n_i}\otm \mj_{n_u}\otm
\bigotimes_{i=u+1}^k E_{p_i,m_i,n_i}),
\]
which is, with the help of Lemma~\ref{lem4} and (\ref{fn}) again,
equal to the new center
\[
\bigotimes_{i=1}^{u-1} E_{p_i,m_i,n_i}\otm E_{p_u+1,m_u,n_u}\otm
\bigotimes_{i=u+1}^k E_{p_i,m_i,n_i}.
\]

\vspace{1ex}

If $f_n$ is $\mj_l\otimes\es\otimes\mj_r$, then we proceed as in
the preceding case, just by relying on Lemma~\ref{lem5} instead of
Lemma~\ref{lem4} in order to obtain the new center
\[
\bigotimes_{i=1}^{u-1} E_{p_i,m_i,n_i}\otm E_{p_u-1,m_u,n_u}\otm
\bigotimes_{i=u+1}^k E_{p_i,m_i,n_i}.
\]

\vspace{1ex}

If $f_n$ is $\mj_l\otimes\ed\otimes\mj_r$, then by relying on
Lemma~\ref{lem6} we have the following
\begin{tabbing}
\hspace{1.5em}\= $f_n\circ \pi\circ \bigotimes_{i=1}^k
E_{p_i,m_i,n_i}$ \= $=(\tau_{1,l}\otm\mj)\circ(\mj_1\otm\pi)\circ
(\ed\otm\mj)\circ \bigotimes_{i=1}^k E_{p_i,m_i,n_i}$
\\[1ex]
\> \> $=(\tau_{1,l}\otm\mj)\circ(\mj_1\otm\pi)\circ
\bigotimes_{i=0}^k E_{p_i,m_i,n_i}$ \`(\ref{fn}),
\end{tabbing}
where $p_0=1$ and $m_0=n_0=0$.

\vspace{1ex}

If $f_n$ is $\mj_l\otimes\md\otimes\mj_r$, then by
Lemmata~\ref{lem8} and~\ref{lem9}, we may assume that the tail
$\pi$ of a special term equal to $f_{n-1}\circ\ldots\circ f_0$ is
of the form
\[
(\tau_{2,l}\otm \mj_{p-l-2})\circ(\mj_2\otm \pi')\circ
(\tau_{j,2}\otm \mj_{p-j-2}).
\]
As above, we obtain
\begin{tabbing}
\hspace{1.5em}\= $f_n\circ
\pi=(\tau_{1,l}\otm\mj)\circ(\mj_1\otm\pi')\circ (\tau_{j,1}\otm
\mj)\circ (\mj_{j}\otm\md\otm\mj)$.
\end{tabbing}

There are two possibilities concerning the term
\[
(\mj_{j}\otm\md\otm\mj)\circ \bigotimes_{i=1}^k E_{p_i,m_i,n_i}.
\]
Either it is equal to
\[
(\mj\otm ((\mj\otm\md\otm\mj)\circ E_{p_u,m_u,n_u})\otm\mj)\circ
(\bigotimes_{i=1}^{u-1} E_{p_i,m_i,n_i}\otm \mj_{n_u}\otm
\bigotimes_{i=u+1}^k E_{p_i,m_i,n_i})
\]
when we apply Lemma~\ref{lem10}, with the help of (\ref{fn}), in
order to obtain
\[
\bigotimes_{i=1}^{u-1} E_{p_i,m_i,n_i}\otm
E_{p_{u}-1,m_u+1,n_u}\otm \bigotimes_{i=u+1}^k E_{p_i,m_i,n_i},
\]
or it is equal to
\begin{tabbing}
\hspace{1.5em}$(\mj\otm ((\mj\otm\md\otm\mj)\circ
(E_{p_u,m_u,n_u}\otm E_{p_{u+1},m_{u+1},n_{u+1}}))\otm\mj)\circ$
\\[1ex]
\` $(\bigotimes_{i=1}^{u-1} E_{p_i,m_i,n_i}\otm
\mj_{n_u+n_{u+1}}\otm \bigotimes_{i=u+2}^k E_{p_i,m_i,n_i})$
\end{tabbing}
when we apply Lemmata~\ref{lem11}, \ref{lem12} and~\ref{lem13} in
order to obtain
\[
\bigotimes_{i=1}^{u-1} E_{p_i,m_i,n_i}\otm
E_{p_{u}+p_{u+1}-1,m_u+m_{u+1},n_u+n_{u+1}} \otm
\bigotimes_{i=u+2}^k E_{p_i,m_i,n_i}
\]
\end{proof}

For $a,b,c,d\geq 0$, and $n_i,q_i,s_i,u_i\geq 1$ consider a
special term of the form
\[
\pi\circ (\bigotimes_{i=1}^a E_{0,m_i,0} \otm \bigotimes_{i=1}^b
E_{0,p_i,n_i} \otm \bigotimes_{i=1}^c E_{q_i,r_i,0}\otm
\bigotimes_{i=1}^d E_{s_i,t_i,u_i})\circ \chi.
\]
If $b\geq 1$, let $\beta^1=0$, and let
$\beta^i=n_1+\ldots+n_{i-1}$, for $i\in\{2,\ldots,b\}$.

If $d\geq 1$, let $\delta^1=n_1+\ldots+n_b$,
$\delta_1=q_1+\ldots+q_c$, and for $i\in\{2,\ldots,d\}$, let
$\delta^i=n_1+\ldots+n_{b}+\ldots+u_1+\ldots+u_{i-1}$ and
$\delta_i=q_1+\ldots+q_{c}+\ldots+s_1+\ldots+s_{i-1}$.

If $c\geq 1$, let $\gamma_1=0$, and for $i\in\{2,\ldots,c\}$, let
$\gamma_i=q_1+\ldots+q_{i-1}$. Such a special term is in
\emph{normal form} when
\[
m_1\leq m_2\leq\ldots\leq m_a,
\]
\[
\chi^{-1}(\beta^1)< \chi^{-1}(\beta^2)<\ldots< \chi^{-1}(\beta^b),
\]
\[
\pi(\gamma_1)< \pi(\gamma_2)<\ldots< \pi(\gamma_c),
\]
\[
\pi(\delta_1)< \pi(\delta_2)<\ldots< \pi(\delta_d),
\]
for every $i\in\{1,\ldots,b\}$
\[
\chi^{-1}(\beta^i)< \chi^{-1}(\beta^i +1)<\ldots<
\chi^{-1}(\beta^i+n_i-1),
\]
for every $i\in\{1,\ldots,d\}$
\[
\chi^{-1}(\delta^i)< \chi^{-1}(\delta^i +1)<\ldots<
\chi^{-1}(\delta^i+u_i-1),
\]
\[
\pi(\delta_i)< \pi(\delta_i +1)<\ldots< \pi(\delta_i+s_i-1),
\]
and finally, for every $i\in\{1,\ldots,c\}$
\[
\pi(\gamma_i)< \pi(\gamma_i +1)<\ldots< \pi(\gamma_i+q_i-1).
\]

By Proposition~\ref{tv1}, Lemma~\ref{lem7} and the equation
\[
f_1\otm f_2= \tau_{m_2,m_1}\circ(f_2\otm f_1)\circ\tau_{n_1,n_2},
\]
which follows from (\ref{nt}) and (\ref{iv}), we can prove the
following.

\begin{thm}\label{thm1}
Every term is equal to a term in normal form.
\end{thm}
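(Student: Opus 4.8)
The plan is to start from the representation guaranteed by Proposition~\ref{tv1}: every term is equal to a special term, so I would fix one such special term
\[
f=\pi\circ \bigotimes_{i=1}^k E_{p_i,m_i,n_i} \circ \chi
\]
(the case of a pure $\tau$-term being immediate, since any permutation already counts as a normal form) and argue that it can be massaged into the displayed normal form without leaving its equivalence class. The first organizational step is to sort the tensor factors $E_{p_i,m_i,n_i}$ in the center into the four groups indicated in the normal-form definition, according to whether the source arity $n_i$ and target arity $p_i$ are zero or positive: the $E_{0,m,0}$ (closed components), the $E_{0,p,n}$ (purely outgoing), the $E_{q,r,0}$ (purely incoming), and the fully mixed $E_{s,t,u}$. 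Reordering tensor factors is exactly what the quoted equation
\[
f_1\otm f_2= \tau_{m_2,m_1}\circ(f_2\otm f_1)\circ\tau_{n_1,n_2}
\]
permits, at the cost of composing new $\tau$-terms onto $\pi$ (the tail) and $\chi$ (the head); so I would first use it repeatedly to bring the center into the four-block shape, absorbing the bookkeeping permutations into $\pi$ and $\chi$.

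Next I would impose the ordering conditions one family at a time. The conditions $m_1\leq\ldots\leq m_a$ on the closed components are achieved purely by permuting those factors among themselves, again via the swap identity, with the induced permutations swallowed into head and tail. The conditions on $\chi^{-1}$ (controlling how the inputs of the $E_{0,p,n}$ and $E_{s,t,u}$ factors are threaded by the head) and on $\pi$ (controlling how the outputs of the $E_{q,r,0}$ and $E_{s,t,u}$ factors are threaded by the tail) are the heart of the matter. Here is where Lemma~\ref{lem7} is decisive: it states $\pi\circ\Lambda_n=\Lambda_n$ and $V_n\circ\pi=V_n$, i.e.\ a permutation acting on the $n+1$ inputs of a $\Lambda$ (the comultiplicative fan-out packed inside each $E$) or on the inputs of a $V$ (the multiplicative fan-in) can be absorbed and disappears. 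This is precisely the freedom needed to rearrange, at no cost, the inner wiring of each $E_{p_i,m_i,n_i}$ so that the strands entering/leaving a single $E$-block occur in consecutive increasing order under $\chi^{-1}$ and $\pi$, which is what the ``parallel within a block'' inequalities
\[
\chi^{-1}(\beta^i)<\ldots< \chi^{-1}(\beta^i+n_i-1),\qquad
\pi(\delta_i)<\ldots< \pi(\delta_i+s_i-1)
\]
and their companions assert. The between-block monotonicity conditions $\pi(\gamma_1)<\ldots<\pi(\gamma_c)$, $\pi(\delta_1)<\ldots<\pi(\delta_d)$, and $\chi^{-1}(\beta^1)<\ldots<\chi^{-1}(\beta^b)$ are then arranged by an outer permutation of whole blocks, once more using the swap identity and Lemma~\ref{lem7} to keep the inner orders intact.

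The step I expect to be the main obstacle is verifying that these normalizations can all be enforced \emph{simultaneously} rather than undoing one another: fixing the head order $\chi^{-1}$ to sort the outgoing wiring must not disturb the tail order $\pi$ that sorts the incoming wiring, and the within-block sortings must survive the between-block sorting. The clean way to see that no conflict arises is to observe that $\chi$ and $\pi$ act on disjoint roles — $\chi$ on the source strands (the inputs of the $\Lambda$'s) and $\pi$ on the target strands (the outputs of the $V$'s) — so the two families of conditions are independent, and within each family Lemma~\ref{lem7} guarantees that any permutation of the strands internal to a single $E$ is free. Thus I would organize the argument as: (i) four-block grouping, (ii) sort the closed blocks by $m_i$, (iii) sort source side via $\chi$ (between-block then within-block, absorbing permutations by Lemma~\ref{lem7}), and (iv) symmetrically sort the target side via $\pi$, checking at each stage that the previously achieved inequalities are preserved because they concern coordinates untouched by the current permutation.
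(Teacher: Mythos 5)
Your proposal is correct and takes essentially the same route as the paper, whose proof of Theorem~\ref{thm1} is exactly the combination you describe: Proposition~\ref{tv1}, Lemma~\ref{lem7} for absorbing permutations into the $V$'s and $\Lambda$'s, and the swap equation $f_1\otimes f_2=\tau_{m_2,m_1}\circ(f_2\otimes f_1)\circ\tau_{n_1,n_2}$ for reordering tensor factors, with your staged sorting and non-interference check being a faithful expansion of that sketch. (One cosmetic slip: since $E_{p,m,n}\colon n\str p$, the blocks $E_{0,p_i,n_i}$ are the purely ingoing ones and $E_{q_i,r_i,0}$ the purely outgoing ones---you have these labels reversed, though your actual manipulations respect the correct typing.)
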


\section{Faithfulness of the interpretation}\label{faithfulness}

The aim of this section is to prove the following result.

\begin{thm}\label{completeness}
For every $d\geq 2$, the interpretation of $\mathbf{K}$ in $dCobS$
is faithful.
\end{thm}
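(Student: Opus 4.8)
We want to show that the interpretation functor $U:\mathbf{K}\str dCobS$ is faithful for $d\geq 2$. Since $U$ is a symmetric monoidal functor that maps $1$ to $\underline{1}$ and preserves the Frobenius structure, and since every term is equal to a term in normal form by Theorem~\ref{thm1}, faithfulness reduces to showing that the composite assignment $[\text{normal form}]\mapsto U([\text{normal form}])$ is injective. Concretely, I would argue that $U$ is injective on arrows by showing that two terms $f,g$ with $U([f])\sim U([g])$ must already be equal in $\mathbf{K}$; by Theorem~\ref{thm1} we may take both $f$ and $g$ in normal form, so the task becomes: \emph{the normal form is a genuine invariant, i.e.\ distinct normal forms have non-equivalent images in $dCobS$.}

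The first step is to read off, from the image cobordism $U([f])$, enough topological data to reconstruct the normal-form parameters. Recall a normal form is $\pi\circ\bigotimes_i E_{p_i,m_i,n_i}\circ\chi$, where each $E_{p,m,n}=\Lambda_{p-1}\circ H_m\circ V_{n-1}:n\str p$ is interpreted (for $d\geq 2$) as a connected $d$-manifold built from a single ``trunk'' with $n$ ingoing and $p$ outgoing boundary spheres, carrying $m$ extra handles coming from the $H_m=(\md\circ\ms)^m$ factors. Under $U$ each tensor factor $E_{p_i,m_i,n_i}$ yields a connected component of the cobordism. So from $U([f])$ I would extract: the number of connected components $k$; for each component, the number of ingoing boundary spheres $n_i$, the number of outgoing boundary spheres $p_i$, and its genus, which encodes $m_i$ (together with $n_i,p_i$) via the Euler characteristic. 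The permutations $\pi,\chi$ record, via Corollary~\ref{cors1} and Proposition~\ref{props1}, the two sequences of connected components of the outgoing and ingoing boundary, i.e.\ which boundary sphere of the domain/codomain attaches to which component. This is exactly the data the normal-form conditions are designed to order canonically, so the conditions $m_1\leq\cdots\leq m_a$ and the various monotonicity constraints on $\pi,\chi$ ensure that this extracted data determines the normal form uniquely.

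The key topological input is that, for $d\geq 2$, the interpretation really does send $E_{p,m,n}$ to a connected manifold whose homeomorphism type is determined by the triple $(p,m,n)$, and whose attaching data is faithfully recorded by the boundary labelling of Proposition~\ref{props1}. For $d=2$ this is the classical classification of surfaces by genus and boundary count, matching the isomorphism $dCobS\cong\textbf{2-Cobord}$ noted in Section~\ref{dCobS} and the normal form of \cite[1.4.16]{K03}. For $d\geq 3$ the manifolds arising as $U([f])$ are precisely the ``pair of pants with handles'' type cobordisms obtained by the pattern of Section~\ref{Frobenius}, and I would use that two such standard cobordisms are equivalent iff their discrete invariants $(k,\{(p_i,m_i,n_i)\},\pi,\chi)$ agree; the genus-type invariant is recovered homologically, e.g.\ from $H_*(M)$ or the Euler characteristic of the component, which is a homeomorphism invariant. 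Matching these invariants against the normal-form ordering conditions term by term shows that $U([f])\sim U([g])$ forces identical normal-form data, hence $[f]=[g]$.

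The main obstacle I anticipate is the genus/handle bookkeeping for $d\geq 3$: I must verify that the $H_m$ factors contribute independently recoverable topological complexity (so that $m$ is determined by the image) and that no unexpected coincidences among standard cobordisms arise in higher dimensions—in particular that a ``handle'' built from $\md\circ\ms$ cannot be homeomorphically absorbed or cancelled against the branching structure. This is precisely where the restriction to spheres (Corollary~\ref{cors1}) and the topological facts of Section~\ref{topological} do the work: because the boundary is always a collection of spheres and the attaching is recorded combinatorially, the only invariants in play are the component count, the ingoing/outgoing sphere counts, the genus, and the two connection-permutations. Once these are shown to be complete invariants of the image and to coincide bijectively with the normal-form parameters, faithfulness follows from Theorem~\ref{thm1}.
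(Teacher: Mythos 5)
Your proposal is correct in outline and rests on the same two pillars as the paper---reduction to the normal form of Theorem~\ref{thm1}, and topological invariants read off from the image cobordism via Proposition~\ref{props1} (in particular the partition of boundary spheres by connected components, which is the relation the paper calls $\rho_K$ and shows invariant under equivalence in Lemma~\ref{rhoinvariant})---but you organize the decisive step differently. You propose to extract \emph{all} normal-form parameters directly from the image, which requires a classification-type lemma for components \emph{with boundary}: that the handle number $m$ of $\underline{E_{p,m,n}}$ is a homeomorphism invariant. Here your suggestion of the Euler characteristic fails for odd $d$: for a compact odd-dimensional manifold $\chi(M)=\chi(\partial M)/2$, so $\chi$ is blind to the $m$ handles (e.g.\ for $d=3$ it just counts boundary spheres). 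Your alternative $H_\ast(M)$ does work, since by the same Van Kampen computation the paper uses, $\pi_1$ of a component is free of rank $m$ for $d\geq 3$, whence $H_1\cong\mZ^m$; for $d=2$ the surface classification applies. The paper instead sidesteps the boundary case entirely by an induction on the total number $n+m$ of boundary spheres: given $\underline{f}\sim\underline{f'}$ with $f,f'$ in normal form, it composes with a term $g$ built from $\mj_1$'s and $\ed$'s that caps off the ingoing spheres of one block, turning $E_{0,p_1,n_1}$ into a closed component $E_{0,p_1,0}$; functoriality gives $\underline{f\circ g}\sim\underline{f'\circ g}$, the induction hypothesis applies to the resulting smaller normal forms, and the only topological input ever needed is the closed-manifold Lemma~\ref{holes} (genus for $d=2$; free fundamental group of rank $n$ for $d\geq 3$, since $\underline{E_{0,1,0}}\cong S^{d-1}\times S^1$), together with Proposition~\ref{corholes} for the closed blocks and the fact, forced by $\rho$, that $\chi$ and $\chi'$ agree on the capped block. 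So your route, once the boundary-case invariant is stated via $\pi_1$ or $H_1$ rather than $\chi$, is a legitimate and arguably more direct alternative; the paper's capping induction buys a strictly weaker topological input (closed manifolds only) at the price of the syntactic verification that composing a normal form with units renormalizes, and of the permutation bookkeeping carried through the induction.
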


For the proof of this theorem, we need some auxiliary notions and
results. Every $d$-cobordism
$K=(M,\Sigma_0,\Sigma_1):\underline{n}\str \underline{m}$,
$\Sigma_0=(\Sigma_0^0,\ldots,\Sigma_0^{n-1})$ and
$\Sigma_1=(\Sigma_1^0,\ldots,\Sigma_1^{m-1})$, induces the
following equivalence relation $\rho_K$ on the set $(n\times
\{0\})\cup (m\times\{1\})$ (cf.\ the relation with the same name
defined in Section~\ref{Brauerian}). For $(i,k)$ and $(j,l)$
elements of $(n\times \{0\})\cup (m\times\{1\})$, we have that
$(i,k)\rho_K (j,l)$ when
\[
\Sigma_k^i\; \mbox{\rm and } \Sigma_l^j \; \mbox{\rm belong to the
same connected component of } M.
\]

From Proposition~\ref{props1}, and since homeomorphisms preserve
connected components, we have the following lemma.

\begin{lem}\label{rhoinvariant}
If two $d$-cobordisms $K=(M,\Sigma_0,\Sigma_1)$ and $L=
(N,\Delta_0,\Delta_1)$ are equivalent, then $\rho_K=\rho_L$.
\end{lem}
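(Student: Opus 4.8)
The plan is to apply Proposition~\ref{props1} to the assumed equivalence $K\sim L$ and then exploit the elementary fact that a homeomorphism carries connected components to connected components. First I would invoke Proposition~\ref{props1}: since $K=(M,\Sigma_0,\Sigma_1)$ and $L=(N,\Delta_0,\Delta_1)$ are equivalent, the sequences $\Sigma_0,\Delta_0$ are of the same length and so are $\Sigma_1,\Delta_1$. This guarantees that $\rho_K$ and $\rho_L$ are equivalence relations on the same underlying set $(n\times\{0\})\cup (m\times\{1\})$, which is a prerequisite for even comparing them. Proposition~\ref{props1} also furnishes a homeomorphism $F:M\str N$ such that for every $i\in\{0,1\}$ and every $j$, the image of $\Sigma_i^j$ under $F$ is $\Delta_i^j$.

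The key step is to observe that, being a homeomorphism, $F$ induces a bijection between the connected components of $M$ and those of $N$, and in particular two points of $M$ lie in the same component if and only if their images under $F$ lie in the same component of $N$. Applying this to the distinguished boundary pieces, $\Sigma_k^i$ and $\Sigma_l^j$ belong to the same connected component of $M$ exactly when $F(\Sigma_k^i)$ and $F(\Sigma_l^j)$ belong to the same connected component of $N$.

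To conclude I would chain these equivalences together. By the definition of $\rho_K$, we have $(i,k)\,\rho_K\,(j,l)$ precisely when $\Sigma_k^i$ and $\Sigma_l^j$ lie in the same component of $M$; by the previous step this holds precisely when $F(\Sigma_k^i)=\Delta_k^i$ and $F(\Sigma_l^j)=\Delta_l^j$ lie in the same component of $N$; and by the definition of $\rho_L$ this is exactly the condition $(i,k)\,\rho_L\,(j,l)$. Hence the two relations agree on every pair, so $\rho_K=\rho_L$.

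There is no substantial obstacle here beyond bookkeeping: the mathematical content is entirely supplied by Proposition~\ref{props1} together with the topological fact that homeomorphisms respect the partition of a space into connected components. The only point requiring a moment's care is the matching of the sequence lengths, which is what ensures the two relations live on a common ground set; everything else is a direct substitution using the identities $F(\Sigma_i^j)=\Delta_i^j$.
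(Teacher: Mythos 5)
Your proof is correct and follows precisely the route the paper indicates: it derives the lemma from Proposition~\ref{props1} (which yields the homeomorphism $F$ with $F[\Sigma_i^j]=\Delta_i^j$ and the matching of sequence lengths) together with the fact that homeomorphisms preserve connected components. The paper gives no further detail beyond this one-line justification, so your write-up is simply a fuller spelling-out of the same argument.
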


The following proposition serves to prove that our categories are
skeletal.

\begin{prop}\label{isomorphism}
If $K:\underline{n}\str \underline{m}$ is an isomorphism, then
$n=m$.
\end{prop}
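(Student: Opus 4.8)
The plan is to route the whole argument through the equivalence relation $\rho_K$ and Lemma~\ref{rhoinvariant}, turning the statement into a counting problem about connected components. Suppose $K=(M,\Sigma_0,\Sigma_1):\underline{n}\str\underline{m}$ is an isomorphism with inverse $L=(N,\Delta_0,\Delta_1):\underline{m}\str\underline{n}$. First I would read off the relation of the identity cobordism: since $\underline{n}\times I$ has exactly $n$ components $S_i\times I$, each meeting the ingoing and the outgoing boundary in a single sphere, $\rho_{\mj_{\underline{n}}}$ consists precisely of the $n$ two-element blocks $\{(i,0),(i,1)\}$. Because $L\circ K\sim\mj_{\underline{n}}$ and $K\circ L\sim\mj_{\underline{m}}$, Lemma~\ref{rhoinvariant} gives $\rho_{L\circ K}=\rho_{\mj_{\underline{n}}}$ and $\rho_{K\circ L}=\rho_{\mj_{\underline{m}}}$.

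Next I would extract two structural facts from $\rho_{L\circ K}=\rho_{\mj_{\underline{n}}}$. First, no two distinct ingoing spheres $\Sigma_0^i,\Sigma_0^{i'}$ of $K$ can lie in the same connected component of $M$: otherwise they would be $\rho_K$-related, hence $\rho_{L\circ K}$-related, contradicting that $(i,0)$ and $(i',0)$ sit in different blocks of $\rho_{\mj_{\underline{n}}}$. Second, the component of $M$ containing $\Sigma_0^i$ must meet the outgoing boundary of $K$, i.e.\ contain one of $\Sigma_1^0,\ldots,\Sigma_1^{m-1}$. Indeed, in the glued manifold underlying $L\circ K$ the sphere $\Sigma_0^i$ is joined to $\Delta_1^i$, which lives in $N$; a connected set meeting both the $M$-part and the $N$-part must cross the gluing locus, and that locus is exactly the outgoing boundary of $K$. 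A short point-set remark (the gluing spheres form a closed submanifold separating the two pieces, so a component of $M$ avoiding them would persist as a full component of $L\circ K$ and could not reach $\Delta_1^i$) upgrades this to the statement about the $M$-component.

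From these two facts the inequality $m\geq n$ is immediate: the $n$ ingoing spheres of $K$ determine $n$ pairwise distinct components of $M$, and selecting in each an outgoing sphere of $K$ produces $n$ distinct spheres among the $m$ outgoing ones. Running the identical argument with $\rho_{K\circ L}=\rho_{\mj_{\underline{m}}}$ in place of $\rho_{L\circ K}$---now with $L$, $N$, and the $m$ ingoing spheres of $L$ playing the roles of $K$, $M$, and the ingoing spheres of $K$---gives $n\geq m$, since $L$ has exactly $n$ outgoing spheres. Hence $n=m$. (Note this also covers the degenerate cases: if $n=0$ the second inequality forces $m\leq 0$.)

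I expect the only delicate step to be the second structural fact, where the topology of the gluing pushout genuinely enters; the rest is bookkeeping with the block structure of $\rho_{\mj}$. This is precisely the point at which the simplification afforded by restricting to collections of spheres---so that an arrow is captured by its manifold and boundary sequences, via Corollary~\ref{cors1} and Proposition~\ref{props1}---makes the connectivity argument clean.
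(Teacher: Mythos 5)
Your proof is correct and takes essentially the same route as the paper: both arguments apply Lemma~\ref{rhoinvariant} to $L\circ K\sim \mj_{\underline{n}}$ and $K\circ L\sim \mj_{\underline{m}}$ and exploit the two topological persistence facts (connectivity in $M$ survives gluing, and a component of $M$ missing the gluing locus survives as a component of the composite, which is the clopen argument you spell out and the paper leaves implicit). The only difference is packaging: the paper rules out singleton classes and classes with two ingoing or two outgoing points, so that every $\rho_K$-class is a block $\{(i,0),(j,1)\}$ giving a bijection between $n$ and $m$ outright, whereas you extract the two injections $n\leq m$ and $m\leq n$ from the same facts.
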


\begin{proof}
We prove that every equivalence class of $\rho_K$ has two
elements, one with the second component 0 and the other with the
second component 1, from which the proposition follows. Let
$L:\underline{m}\str \underline{n}$ be the inverse of $K$.

Suppose that an equivalence class of $\rho_K$ is a singleton
$\{(i,0)\}$. Then $\{(i,0)\}$ is an equivalence class of
$\rho_{L\circ K}$, which is impossible by
Lemma~\ref{rhoinvariant}, since $L\circ K$ is equivalent to the
identity $d$-cobordism.

Suppose that for $i\neq j$, an equivalence class of $\rho_K$
contains both $(i,0)$ and $(j,0)$. Then an equivalence class of
$\rho_{L\circ K}$ contains both $(i,0)$ and $(j,0)$, which is
again impossible by Lemma~\ref{rhoinvariant}.

We proceed analogously, by relying on $\rho_{K\circ L}$, in cases
when an equivalence class of $\rho_K$ is a singleton $\{(i,1)\}$
or when for $i\neq j$, an equivalence class of $\rho_K$ contains
both $(i,1)$ and $(j,1)$.
\end{proof}

\begin{cor}\label{skeletal}
The categories $dCobS$, for $d\geq 2$, and $\mathbf{K}$ are
skeletal.
\end{cor}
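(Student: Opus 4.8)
The plan is to reduce both statements to Proposition~\ref{isomorphism}. For $dCobS$ with $d\geq 2$, skeletality is immediate: the objects are exactly $\underline{0},\underline{1},\underline{2},\ldots$, so $\underline{n}=\underline{m}$ holds precisely when $n=m$. Thus if $\underline{n}$ and $\underline{m}$ were isomorphic, there would be an isomorphism $K:\underline{n}\str\underline{m}$, and Proposition~\ref{isomorphism} would force $n=m$, hence $\underline{n}=\underline{m}$.

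For $\mathbf{K}$, the idea is to transport any isomorphism along the interpretation functor into $dCobS$, where skeletality has just been established. Fix some $d\geq 2$ and let $U:\mathbf{K}\str dCobS$ be the interpretation, which exists by the universal property of $\mathbf{K}$ and satisfies $U(1)=\underline{1}$. Being symmetric monoidal, $U$ preserves the unit and tensor on objects on the nose; since the object $n$ of $\mathbf{K}$ is the $n$-fold tensor power of $1$, and the monoidal product on objects is addition in both categories (with $\underline{n}+\underline{m}=\underline{n+m}$ in $dCobS$), one obtains $U(n)=\underline{n}$ for every $n$. Functors preserve isomorphisms, so an isomorphism $n\cong m$ in $\mathbf{K}$ yields an isomorphism $\underline{n}\cong\underline{m}$ in $dCobS$, whence $n=m$ by the first part.

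I expect no genuine obstacle here, as all the substantive work is already contained in Proposition~\ref{isomorphism} and Lemma~\ref{rhoinvariant}. The only point requiring a little care is the equality $U(n)=\underline{n}$: it rests on $U$ sending the monoidal unit to $\underline{0}$ and on reading $n$ as $1^{\otimes n}$, which is exactly how the objects of $\mathbf{K}$ were specified in Section~\ref{categoryK}. With this identification in place, each of the two claims follows in a single line.
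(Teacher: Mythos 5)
Your proposal is correct and matches the paper's intended argument: the corollary is stated there without proof as an immediate consequence of Proposition~\ref{isomorphism}, applied directly to $dCobS$ and transported to $\mathbf{K}$ along the interpretation functor, exactly as you do. Your extra care about $U(n)=\underline{n}$ (unit to unit, tensor to tensor on the nose, objects of $\mathbf{K}$ being tensor powers of $1$) is precisely the routine verification the paper leaves implicit, and you correctly avoid any circularity by using only the existence of $U$ from the universal property, not its faithfulness.
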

That $1CobS$ is also skeletal is proved in
Section~\ref{Brauerian}. The following implication has a trivial
converse.

\begin{lem}\label{holes}
If $\underline{E_{0,n,0}}\sim \underline{E_{0,m,0}}$, then $n=m$.
\end{lem}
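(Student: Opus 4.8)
The converse is the trivial implication mentioned just above, so the work is all in the forward direction, and the plan is to identify the closed $d$-manifold underlying $\underline{E_{0,n,0}}$ and then read off $n$ from a homeomorphism invariant. Recall that $E_{0,n,0}=\es\circ H_n\circ\ed$ with $H_n=(\md\circ\ms)^n$, and that $\underline{\ed},\underline{\es}$ are $d$-discs while $\underline{\ms},\underline{\md}$ are $d$-discs with two open balls removed (Section~\ref{Frobenius}). First I would unwind the gluing in the composition of cobordisms to see that $\underline{\ms}\circ\underline{\ed}$ is a tube $S^{d-1}\times I$ with both spheres outgoing: capping the ingoing sphere of $\underline{\ms}$ by the disc $\underline{\ed}$ fills it, leaving $S^d$ minus two open balls, which is $S^{d-1}\times I$; dually $\underline{\es}\circ\underline{\md}$ is such a tube with both spheres ingoing. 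For $n=1$ this already gives $\underline{E_{0,1,0}}=S^{d-1}\times S^1$, since gluing two copies of $S^{d-1}\times I$ along both pairs of boundary spheres produces $S^{d-1}\times S^1$ (exactly as $I\cup I$ glued at both endpoints is $S^1$). For $n=0$ one gets $\underline{E_{0,0,0}}=\underline{\es}\circ\underline{\ed}=S^d$.

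The inductive step is the key structural point: inserting one more factor $\underline{\md}\circ\underline{\ms}$ amounts to removing two disjoint $d$-balls from the manifold built so far and gluing in a tube $S^{d-1}\times I$ joining the two resulting boundary spheres, which is precisely the operation of taking the connected sum with $S^{d-1}\times S^1$ (attaching a handle). Carrying this induction through yields that $\underline{E_{0,n,0}}$ is the closed oriented $d$-manifold $\#_n(S^{d-1}\times S^1)$, with the empty connected sum read as $S^d$. Since both cobordisms have source and target $\underline{0}$, Proposition~\ref{props1} tells us that $\underline{E_{0,n,0}}\sim\underline{E_{0,m,0}}$ holds exactly when these underlying closed manifolds are (orientation-preservingly) homeomorphic; in particular they then have isomorphic homology.

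It therefore suffices to produce a homology invariant that is strictly monotone in $n$, and I would use the first Betti number $b_1$. A Mayer--Vietoris computation for connected sums, using that $S^{d-1}$ is simply connected for $d\geq 3$, gives $b_1\bigl(\#_n(S^{d-1}\times S^1)\bigr)=n$ for $d\geq 3$, while for $d=2$ one has the closed orientable genus-$n$ surface and $b_1=2n$; in either case $b_1$ determines $n$. Hence equality of $b_1$ forces $n=m$. The homology computation is routine; the main obstacle is the first step, namely the clean verification that the composite $\underline{\md}\circ\underline{\ms}$ genuinely performs a handle attachment, which requires care with the gluing maps and orientations in the definition of composition. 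Once the tube description of $\underline{\ms}\circ\underline{\ed}$ is established, the inductive identification with $\#_n(S^{d-1}\times S^1)$ follows from the standard fact that excising two balls and inserting an $S^{d-1}\times I$ tube realizes the connected sum with $S^{d-1}\times S^1$.
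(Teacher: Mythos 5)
Your proof is correct, and its skeleton is the same as the paper's: pass from equivalence of cobordisms to a homeomorphism of the underlying closed $d$-manifolds, then read off $n$ from an algebraic-topological invariant. The differences are in the details. The paper is terser: it identifies only $\underline{E_{0,1,0}}$ with $S^{d-1}\times S^1$ and then invokes Van Kampen to conclude that $\pi_1(\underline{E_{0,n,0}})$ is free of rank $n$ when $d\geq 3$, treating $d=2$ separately by reading $n$ as the genus. You instead carry out the identification $\underline{E_{0,n,0}}\cong \#_n(S^{d-1}\times S^1)$ explicitly, by showing that $\underline{\ms}\circ\underline{\ed}$ and $\underline{\es}\circ\underline{\md}$ are tubes and that each inserted $\underline{\md}\circ\underline{\ms}$ attaches a handle, and then use the abelianized invariant $b_1$ (computed by Mayer--Vietoris) rather than $\pi_1$: $b_1=n$ for $d\geq 3$ and $b_1=2n$ for $d=2$. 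What your route buys is uniformity --- one connected-sum description covers all $d\geq 2$, and the case split appears only in the final numerical value of $b_1$ --- together with making explicit the inductive gluing step that the paper leaves implicit in its appeal to Van Kampen; what the paper's route buys is brevity, and for $d\geq 3$ its invariant ($\pi_1$ free of rank $n$) is strictly finer than yours, though for this lemma the abelianization suffices. One point you rightly flag but should not understate: the claim that capping and handle attachment behave as stated needs the topological facts the paper assembles in its appendix (e.g.\ that the region between standard spheres is an annulus $S^{d-1}\times I$), and orientability is what guarantees the handle attachment yields the untwisted sum $\#\,(S^{d-1}\times S^1)$; with that caveat addressed, your argument is complete.
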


\begin{proof}
The $d$-manifolds underlying the cobordisms
$\underline{E_{0,n,0}}$ and $\underline{E_{0,m,0}}$ are closed,
and these $d$-cobordisms can be identified with the underlying
manifolds. Moreover, $\underline{E_{0,n,0}}\sim
\underline{E_{0,m,0}}$ means that these manifolds are
homeomorphic. In the case when $d=2$, we have that $n$ is the
genus of $\underline{E_{0,n,0}}$, and when $d\geq 3$, we have that
$\underline{E_{0,1,0}}$ is homeomorphic to $S^{d-1}\times S^1$,
which with the help of Van Kampen's Theorem asserts that the
fundamental group of $\underline{E_{0,n,0}}$ is the free group
with $n$ generators.
\end{proof}

In the sequel, we assume that $f$ and $f'$ are two normal forms
\[
\pi\circ (\bigotimes_{i=1}^a E_{0,m_i,0} \otm \bigotimes_{i=1}^b
E_{0,p_i,n_i} \otm \bigotimes_{i=1}^c E_{q_i,r_i,0}\otm
\bigotimes_{i=1}^d E_{s_i,t_i,u_i})\circ \chi
\]
and
\[
\pi'\circ (\bigotimes_{i=1}^{a'} E_{0,m'_i,0} \otm
\bigotimes_{i=1}^{b'} E_{0,p'_i,n'_i} \otm \bigotimes_{i=1}^{c'}
E_{q'_i,r'_i,0}\otm \bigotimes_{i=1}^{d'} E_{s'_i,t'_i,u'_i})\circ
\chi'.
\]

\begin{prop}\label{corholes}
If $\underline{f}\sim \underline{f'}$, then $a=a'$ and $m_i=m'_i$
for every $1\leq i\leq a$.
\end{prop}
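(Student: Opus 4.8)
The plan is to recognize the closed (boundaryless) connected components of the manifold underlying $\underline{f}$, to show that they are exactly the interpretations of the summands $E_{0,m_i,0}$, and then to use the homeomorphism witnessing $\underline{f}\sim\underline{f'}$ to match these components up and apply Lemma~\ref{holes}.

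First I would describe the manifold underlying a normal form. Since the interpretation sends $\otimes$ to ``putting side by side'', the center of the normal form is interpreted as the disjoint union of the cobordisms $\underline{E_{0,m_i,0}}$, $\underline{E_{0,p_i,n_i}}$, $\underline{E_{q_i,r_i,0}}$ and $\underline{E_{s_i,t_i,u_i}}$, each of which is connected (every $E_{p,m,n}$ funnels through a single strand, being $\Lambda_{p-1}\circ H_m\circ V_{n-1}$). The $\tau$-terms $\pi$ and $\chi$ are interpreted as symmetries, i.e.\ disjoint unions of cylinders $\underline{1}\times I$; precomposing with $\chi$ and postcomposing with $\pi$ merely glues such cylinders as collars along the ingoing and outgoing boundary, which changes neither the homeomorphism type of the underlying manifold nor any component disjoint from the boundary. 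A summand $E_{0,m_i,0}$ has both source and target equal to $0$, so $\underline{E_{0,m_i,0}}$ is a closed manifold, whereas each of the other three types has source or target at least $1$ (as $n_i,q_i,s_i,u_i\geq 1$) and hence nonempty boundary. Therefore the closed components of the manifold $M$ underlying $\underline{f}$ are precisely the $a$ connected closed manifolds $\underline{E_{0,m_i,0}}$, and likewise the closed components of the manifold $N$ underlying $\underline{f'}$ are the $a'$ manifolds $\underline{E_{0,m'_i,0}}$.

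Next, from $\underline{f}\sim\underline{f'}$ the definition of equivalence provides a homeomorphism $F\colon M\str N$. As $F$ carries $\partial M$ onto $\partial N$ and sends connected components bijectively to connected components, it restricts to a homeomorphism between the closed components of $M$ and those of $N$. In particular $a=a'$, and there is a bijection $\phi$ of $\{1,\ldots,a\}$ such that $\underline{E_{0,m_i,0}}$ is homeomorphic to $\underline{E_{0,m'_{\phi(i)},0}}$. Lemma~\ref{holes} then yields $m_i=m'_{\phi(i)}$, so $(m_1,\ldots,m_a)$ and $(m'_1,\ldots,m'_{a'})$ are permutations of one another. Both being non-decreasing by the normal-form conditions $m_1\leq\ldots\leq m_a$ and $m'_1\leq\ldots\leq m'_{a'}$, they coincide termwise, giving $m_i=m'_i$ for every $1\leq i\leq a$.

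The main obstacle is the structural first step: verifying carefully that each $\underline{E_{p,m,n}}$ is connected and that the symmetries $\chi$ and $\pi$ are irrelevant up to homeomorphism, so that the closed components of $\underline{f}$ are exactly the $\underline{E_{0,m_i,0}}$. Once this is settled, the argument is a routine count of connected components combined with Lemma~\ref{holes} and the sortedness built into the normal form.
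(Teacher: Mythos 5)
Your proof is correct and follows essentially the same route as the paper's: the homeomorphism witnessing $\underline{f}\sim\underline{f'}$ must match closed components, which are precisely the $\underline{E_{0,m_i,0}}$ and $\underline{E_{0,m'_i,0}}$, yielding a bijection with $\underline{E_{0,m_i,0}}\sim\underline{E_{0,m'_j,0}}$, whence $a=a'$, and Lemma~\ref{holes} together with the sortedness $m_1\leq\ldots\leq m_a$ forces $m_i=m'_i$ termwise. Your explicit verification that the closed components are exactly the $\underline{E_{0,m_i,0}}$ (connectedness of each $\underline{E_{p,m,n}}$, irrelevance of the $\tau$-terms, and the fact that the other summands have nonempty boundary since $n_i,q_i,s_i,u_i\geq 1$) spells out detail the paper leaves implicit, but the argument is the same.
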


\begin{proof}
Since every homeomorphism justifying $\underline{f}\sim
\underline{f'}$ maps the closed components of $\underline{f}$ to
the closed components of $\underline{f'}$, there must be a
bijection from $\{1,\ldots,a\}$ to $\{1,\ldots,a'\}$ such that
$\underline{E_{0,m_i,0}}\sim \underline{E_{0,m'_j,0}}$, for $j$
corresponding to $i$ by this bijection. Hence, we have $a=a'$, and
by Lemma~\ref{holes}, since the sequences $(m_i)$ and $(m'_i)$ are
increasing, we conclude that $m_i=m'_i$ for every $1\leq i\leq a$.
\end{proof}

The following proposition has Theorem~\ref{completeness} as an
immediate corollary.

\begin{prop}
If $\underline{f}\sim \underline{f'}$, then $f$ and $f'$ are
identical.
\end{prop}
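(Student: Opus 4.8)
The plan is to show that the two normal forms $f$ and $f'$ carry identical data: the parameters $a,b,c,d$, all the handle indices $m_i,n_i,p_i,q_i,r_i,s_i,t_i,u_i$, and the head and tail permutations $\chi,\pi$. Since a normal form is completely determined by this data, establishing these equalities yields $f=f'$. The closed components are already disposed of: Proposition~\ref{corholes} gives $a=a'$ and $m_i=m'_i$ for every $i$. For the boundary-touching components I would exploit the invariant $\rho_K$. Each factor $E$ in the center is a single connected manifold, and the factors are put side by side disjointly, so the connected components of $\underline{f}$ are exactly the interpretations of the factors. By Lemma~\ref{rhoinvariant}, $\underline{f}\sim\underline{f'}$ forces $\rho_{\underline{f}}=\rho_{\underline{f'}}$, so the partition of the ingoing and outgoing boundary points into connected components is the same for both terms.

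From this common partition one reads off everything combinatorial. A class containing only ingoing points corresponds to a factor $E_{0,p_i,n_i}$, a class containing only outgoing points to a factor $E_{q_i,r_i,0}$, and a class containing points of both kinds to a factor $E_{s_i,t_i,u_i}$; counting the classes of each type gives $b=b'$, $c=c'$, $d=d'$, and the sizes of the ingoing and outgoing parts of each class give $n_i=n'_i$, $q_i=q'_i$, $s_i=s'_i$, $u_i=u'_i$, once the factors are listed in the canonical order prescribed by the normal form (the $b$-factors by least ingoing point, the $c$- and $d$-factors by least outgoing point). The head $\chi$ and tail $\pi$ are then forced: relative to the now-fixed block structure, the chains of inequalities in the definition of normal form say exactly that within each block the boundary points are enumerated increasingly and the blocks are ordered by their first point, so $\chi$ and $\pi$ are the unique permutations compatible with the partition. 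As the partition is shared, $\chi=\chi'$ and $\pi=\pi'$.

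It remains to match the handle numbers $p_i,r_i,t_i$. By Proposition~\ref{props1} the homeomorphism $F$ witnessing $\underline{f}\sim\underline{f'}$ carries each connected component of $\underline{f}$ onto the corresponding component of $\underline{f'}$, respecting ingoing and outgoing boundaries, so matched boundary-touching components are homeomorphic by a boundary-preserving homeomorphism. To extract the handle number I would cap each boundary sphere of such a component with a $d$-disc; corresponding boundaries are capped in the same way, so the capped manifolds remain homeomorphic, and each boundary-touching factor is thereby reduced to a closed manifold homeomorphic to $\underline{E_{0,t,0}}$. Its handle number is then detected precisely as in Lemma~\ref{holes}: by the genus when $d=2$, and for $d\geq 3$ by the rank of the fundamental group, which a Van Kampen computation shows to be free on as many generators as there are handles (capping a sphere boundary by a disc leaves the fundamental group unchanged, since $S^{d-1}$ is simply connected for $d\geq 3$). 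Hence $p_i=p'_i$, $r_i=r'_i$, $t_i=t'_i$, and with all parameters, handle data, and permutations equal, $f$ and $f'$ are identical. I expect this last step to be the main obstacle, as it is the genuinely topological point and the one that uses the restriction to collections of spheres: one must ensure that in every dimension $d\geq 2$ the handle number is recoverable from a homeomorphism invariant of the component once its boundary is fixed, which is exactly the content secured by Lemma~\ref{holes} and the facts of Section~\ref{topological}.
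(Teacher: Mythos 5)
Your proposal is correct, but it is organized quite differently from the paper's proof. The paper argues by induction on $n+m$: assuming $b>0$, it uses $\rho$ (Lemma~\ref{rhoinvariant}) only to see that $X\times\{0\}$, for $X=\chi^{-1}[\{0,\ldots,n_1-1\}]$, is a common equivalence class, whence $n_1=n'_1$ and $\chi,\chi'$ agree on $X$; it then caps this one factor \emph{syntactically}, composing with a term $g$ built from $\ed$'s, so that $f\circ g$ and $f'\circ g$ are again normal forms with smaller $n+m$ in which $E_{0,p_1,n_1}$ has become the closed factor $E_{0,p_1,0}$; functoriality of the interpretation gives $\underline{f\circ g}\sim\underline{f'\circ g}$, the induction hypothesis makes these identical, Proposition~\ref{corholes} then yields $p_1=p'_1$, and the permutations are recovered step by step. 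You instead avoid induction entirely: you read off $b,c,d$, all block sizes, and (via the normal-form monotonicity constraints) the \emph{full} permutations $\chi$ and $\pi$ at once from the shared partition $\rho$, and you match the handle numbers by capping boundary spheres \emph{topologically} with $d$-discs and invoking Lemma~\ref{holes}. Both routes are sound; yours is more global and makes transparent the classification of arrows in the image (partition plus one handle number per class plus closed-component data), while the paper's inductive version stays almost entirely inside the syntax, so that its only topological inputs are Propositions~\ref{props1}, \ref{rhoinvariant} and Lemma~\ref{holes}. The one point you should make explicit is the capping step: gluing discs to both sides and claiming the capped manifolds ``remain homeomorphic'' requires extending the restriction of $F$ on each boundary sphere over the attached disc, which holds in the topological category by coning (Alexander's trick) but is exactly the kind of step that fails smoothly; alternatively you can sidestep it in the paper's spirit by capping functorially, composing with the cobordisms $\underline{\ed}$ and $\underline{\es}$ and noting that $\es^{\otm s}\circ E_{s,t,u}\circ\ed^{\otm u}=E_{0,t,0}$ holds already in $\mathbf{K}$ by Lemma~\ref{lem5} and the equation (\ref{un}), so that equivalence of the capped closed components follows from functoriality of the interpretation rather than from a hand-built homeomorphism.
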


\begin{proof}
By Proposition~\ref{props1} we have that $f$ and $f'$ are of the
same type $n\str m$. We proceed by induction on $n+m$. If $n+m=0$,
then we apply Proposition~\ref{corholes}.

If $n+m>0$, let $\rho$ be the equivalence relation corresponding,
by Lemma~\ref{rhoinvariant}, both to $\underline{f}$ and
$\underline{f'}$. Suppose that $b>0$, hence $E_{0,p_1,n_1}$
appears in $f$. The relation $\rho$ guaranties that $b'>0$. Let
$X=\chi^{-1}[\{0,\ldots, n_1-1\}]$. The set $X\times\{0\}$ is an
equivalence class of $\rho$, namely the equivalence class of
$(\chi^{-1}(0),0)$. Our normal form and the relation $\rho$
guarantee that $n'_1=n_1$, and that $\chi$ and $\chi'$ coincide
on~$X$.

Let $g$ be the term $g_0\otimes\ldots\otimes g_{n-1}$, where
\[
g_i=\left\{ \begin{array}{ll} \mj_1, & i\not\in X, \\
\ed, & i\in X.\end{array}\right.
\]
By relying on the equation (\ref{nt}), $f\circ g$ is equal to the
normal form $f_1$
\[
\pi\circ (A \otm \bigotimes_{i=2}^b E_{0,p_i,n_i} \otm C \otm
D)\circ \chi_1,
\]
where $A$ is of the form
\[
\bigotimes_{i=1}^{k} E_{0,m_i,0} \otm E_{0,p_1,0}\otm
\bigotimes_{i=k+1}^{a} E_{0,m_i,0},
\]
while $C$ is $\bigotimes_{i=1}^c E_{q_i,r_i,0}$, and $D$ is
$\bigotimes_{i=1}^d E_{s_i,t_i,u_i}$. Analogously, we conclude
that $f'\circ g$ is equal to the normal form $f'_1$
\[
\pi'\circ (A' \otm \bigotimes_{i=2}^{b'} E_{0,p'_i,n'_i} \otm C'
\otm D')\circ \chi'_1,
\]
with the abbreviations $A'$, $C'$ and $D'$ as above.

From $\underline{f\circ g}\sim \underline{f'\circ g}$, since the
interpretation is a functor, it follows that $\underline{f_1}\sim
\underline{f'_1}$. By the induction hypothesis $f_1$ and $f'_1$
are identical. We have that $\bigotimes_{i=1}^a E_{0,m_i,0}$ and
$\bigotimes_{i=1}^{a'} E_{0,m'_i,0}$ are identical, by
Proposition~\ref{corholes}, which together with the fact that $A$
and $A'$ are identical delivers that $p_1=p'_1$. It remains only
to prove that the permutations $\chi$ and $\chi'$ are equal, which
follows from the fact that $\chi_1$ and $\chi'_1$ are equal and
that $\chi$ and $\chi'$ coincide on $X$.

We proceed analogously in all the other situations ($b=0$ and
$c>0$, or $b=c=0$ and $d>0$).
\end{proof}

Since the interpretation of $\mathbf{K}$ in $dCobS$ is one-one on
objects, it is an embedding. The following corollary asserting
that $2CobS$ is a PROP having $\underline{1}$ as the universal
commutative Frobenius object is already given in
\cite[Theorem~3.6.19]{K03}.

\begin{cor}\label{universal}
The category $\mathbf{K}$ is isomorphic to $2CobS$.
\end{cor}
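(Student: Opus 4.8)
By Theorem~\ref{completeness}, for $d=2$ the interpretation of $\mathbf{K}$ in $2CobS$ is faithful, and as already observed it is one-one on objects: it sends the ordinal $n$ to $\underline{n}$, since it maps $1$ to $\underline{1}$ and preserves $\otimes$. As this map is clearly onto the objects $\underline{0},\underline{1},\underline{2},\ldots$ of $2CobS$, the plan is to reduce the whole statement to a single missing ingredient, namely that the interpretation is \emph{full}; together with bijectivity on objects and faithfulness this yields an isomorphism of categories.

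The plan for fullness is to realize every arrow of $2CobS$ as the interpretation of a normal form. Let $K=(M,\Sigma_0,\Sigma_1):\underline{n}\str\underline{m}$ be an arbitrary arrow. By Corollary~\ref{cors1} and Proposition~\ref{props1}, $K$ is determined up to equivalence by the underlying $2$-manifold $M$ together with the two sequences of boundary circles. First I would decompose $M$ into its connected components. Each component is a compact oriented surface with boundary, so by the classification of such surfaces it is determined up to orientation-preserving homeomorphism by its genus together with the numbers of its ingoing and outgoing boundary circles. I would then match a component with $n_i$ ingoing circles, $p_i$ outgoing circles and genus $m_i$ to the interpretation $\underline{E_{p_i,m_i,n_i}}$ of the generator $E_{p_i,m_i,n_i}:n_i\str p_i$, using that $V_{n_i-1}$ fuses the $n_i$ inputs, that each factor $\md\circ\ms$ of $H_{m_i}$ contributes one handle and hence $H_{m_i}$ gives genus $m_i$, and that $\Lambda_{p_i-1}$ splits into the $p_i$ outputs.

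Having identified each component of $M$ with some $\underline{E_{p_i,m_i,n_i}}$, I would assemble their disjoint union as $\bigotimes_i \underline{E_{p_i,m_i,n_i}}=\underline{\bigotimes_i E_{p_i,m_i,n_i}}$. The bijections between the abstract boundary circles of this disjoint union and the actual positions prescribed by the sequences $\Sigma_0$ and $\Sigma_1$ are recorded by two permutations, which are interpretations of $\tau$-terms $\pi$ (outgoing side) and $\chi$ (ingoing side) by Lemma~\ref{lem2}. Then $K\sim\underline{\pi\circ\bigotimes_i E_{p_i,m_i,n_i}\circ\chi}$, and by Theorem~\ref{thm1} the bracketed term may be taken in normal form; this exhibits $K$ as a value of the interpretation, proving fullness and hence the isomorphism.

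The hard part will be this fullness step, and within it the careful bookkeeping of orientations: one must check that each connected surface, with the conventions of Section~\ref{dCobS} (ingoing boundary orientation-preserving, outgoing orientation-reversing), is orientation-preservingly homeomorphic to the chosen $\underline{E_{p_i,m_i,n_i}}$, so that Proposition~\ref{props1} applies. The surface-classification input here is exactly the two-dimensional instance already invoked in Lemma~\ref{holes} for closed components, now extended to components with boundary. Alternatively, since this is precisely the assertion that $2CobS\cong\textbf{2-Cobord}$ is freely generated by a commutative Frobenius object, one may invoke \cite[Theorem~3.6.19]{K03} for fullness and combine it with our faithfulness result to conclude.
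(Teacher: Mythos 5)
Your proposal is correct and takes essentially the same approach as the paper: the paper's proof likewise combines faithfulness (Theorem~\ref{completeness}), bijectivity on objects, and fullness deduced from the classification theorem for 2-manifolds \cite[VI.40]{ST80}, which is precisely the step you expand by matching each connected component of genus $m_i$ with $n_i$ ingoing and $p_i$ outgoing circles to $\underline{E_{p_i,m_i,n_i}}$ and adjusting by $\tau$-terms. Your detailed component-by-component argument (and the noted fallback to \cite[Theorem~3.6.19]{K03}) simply makes explicit what the paper's one-line proof leaves to the reader.
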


\begin{proof}
From the classification theorem for 2-manifolds (see for example
\cite[VI.40]{ST80}) it follows that, in the case $d=2$, the
interpretation is full.
\end{proof}

However, for $d>2$, the interpretation is not full, and hence not
an isomorphism.

\section{Appendix}

\subsection{Topological manifolds, orientation and
gluing}\label{orientation}

For $n\geq 0$, an $n$-\emph{dimensional manifold} $M$ is a second
countable Hausdorff space that is locally Euclidean of dimension
$n$. This means that the topology of $M$ admits a countable basis,
that there are disjoint neighborhoods of every pair of distinct
points in $M$, and that every point in $M$ has a neighborhood
homeomorphic to an open subset of $\mR^n$. A \emph{chart} of $M$
is a homeomorphism $\varphi:U\str U'$, where $U\subseteq M$ and
$U'\subseteq\mR^n$ are open. An \emph{atlas} of $M$ is a
collection of its charts $\{\varphi_i:U_i\str U'_i\mid i\in I\}$
such that $\bigcup\{U_i\mid i\in I\}=M$.

For $n\geq 1$, an $n$-dimensional \emph{manifold with boundary},
shortly $\partial$-\emph{manifold}, is a second countable
Hausdorff space in which every point has a neighborhood
homeomorphic to an open subset of the halfspace
\[
\pi^+_n=\{(x_1,\ldots,x_n)\in\mR^n\mid x_n\geq 0\}.
\]
A \emph{chart} of an $n$-dimensional $\partial$-manifold $M$ is a
homeomorphism $\varphi:U\str U'$, where $U\subseteq M$ and
$U'\subseteq\pi^+_n$ are open. An \emph{atlas} of $M$ is again a
collection of its charts whose domains cover $M$.

A \emph{boundary point} of $M$ is a point mapped to a point in the
hyperplane
\[
\pi_n=\{(x_1,\ldots,x_n)\in\mR^n\mid x_n= 0\}
\]
by some chart, otherwise, it is an \emph{interior point} of $M$.
The set of boundary points of $M$ is its \emph{boundary} $\partial
M$, which is an $(n-1)$-dimensional manifold, and the set of
interior points of $M$ is its \emph{interior} $\Int M$, which is
an $n$-dimensional manifold. The \emph{interior} $\Int U$ of an
open subset $U$ of $M$ is $U-\partial M$. Every $n$-dimensional
manifold, for $n\geq 1$, is an $n$-dimensional
$\partial$-manifold, with the empty boundary.

\vspace{1ex}

A homeomorphism $f:U\str V$ for open $U,V\subseteq \mR^n$, $n\geq
1$, is \emph{orientation preserving} when for every $x\in U$ the
following isomorphism of homology groups with coefficients in
$\mZ$ is the identity
\[
H_n(\mR^n,\mR^n\!-\{0\})\stackrel{\cong\;}{\str}
H_n(U,U-\{x\})\stackrel{f_\ast\;}{\str}
H_n(V,V-\{f(x)\})\stackrel{\cong\;}{\str}
H_n(\mR^n,\mR^n\!-\{0\}).
\]
Here, the first isomorphism is the composition
\[
H_n(\mR^n,\mR^n\!-\{0\})\stackrel{(t_x)_\ast}{\longrightarrow}
H_n(\mR^n,\mR^n\!-\{x\})\stackrel{\rm excision}{\longrightarrow}
H_n(U,U-\{x\}),
\]
where $t_x:\mR^n\str \mR^n$ is the translation by $x$, and the
last isomorphism is defined analogously.

\begin{lem}\label{lema trans}
Let $\{W_i\mid i\in I\}$ be an open cover of an open subset $U$ of
$\mR^n$. A homeomorphism $f:U\str V$, for $V$ an open subset of
$\mR^n$, is orientation preserving iff for every $i\in I$, the
restriction of $f$ to $W_i$ is orientation preserving.
\end{lem}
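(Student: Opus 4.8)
The plan is to reduce the biconditional to the \emph{pointwise} definition and exploit that being orientation preserving \emph{at a point} $x$ is a local condition, depending only on the germ of $f$ near $x$. Write $\phi_U^x$ for the defining isomorphism
\[
H_n(\mR^n,\mR^n\!-\{0\})\stackrel{(t_x)_\ast}{\longrightarrow}H_n(\mR^n,\mR^n\!-\{x\})\stackrel{\rm exc}{\longrightarrow}H_n(U,U-\{x\}),
\]
and let $\phi_V^y$ denote the analogous isomorphism for $y=f(x)$. Unravelling the definition, $f$ is orientation preserving at $x$ exactly when $f_\ast\circ\phi_U^x=\phi_V^y$. Since $\{W_i\mid i\in I\}$ covers $U$, every $x\in U$ lies in some $W_i$; so it suffices to establish, for each $i$ and each $x\in W_i$, the equivalence ``$f$ is orientation preserving at $x$ iff $f|_{W_i}$ is''.

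First I would record the relevant excision identifications. For $x\in W_i$ the inclusion of pairs $(W_i,W_i-\{x\})\hookrightarrow(U,U-\{x\})$ is an excision: the set $U-W_i$ is closed in $U$ and, as $x\in W_i$, it is contained in the open set $U-\{x\}$, so the hypotheses of the excision theorem are met. Hence the induced map $\iota_\ast$ is an isomorphism, and applying functoriality of $H_n$ to the chain of inclusions $(W_i,W_i-\{x\})\hookrightarrow(U,U-\{x\})\hookrightarrow(\mR^n,\mR^n-\{x\})$ and inverting the excision isomorphisms yields $\phi_U^x=\iota_\ast\circ\phi_{W_i}^x$. The same argument on the target side, with $V_i=f(W_i)$ open in $V$, produces an isomorphism $j_\ast:H_n(V_i,V_i-\{y\})\str H_n(V,V-\{y\})$ with $\phi_V^y=j_\ast\circ\phi_{V_i}^y$.

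Next I would combine these with the naturality square arising from the equality of maps of pairs $f\circ\iota=j\circ(f|_{W_i})$, which gives $f_\ast\circ\iota_\ast=j_\ast\circ(f|_{W_i})_\ast$. Then
\[
f_\ast\circ\phi_U^x=f_\ast\circ\iota_\ast\circ\phi_{W_i}^x=j_\ast\circ(f|_{W_i})_\ast\circ\phi_{W_i}^x,
\]
while $\phi_V^y=j_\ast\circ\phi_{V_i}^y$. Because $j_\ast$ is an isomorphism, $f_\ast\circ\phi_U^x=\phi_V^y$ holds iff $(f|_{W_i})_\ast\circ\phi_{W_i}^x=\phi_{V_i}^y$; that is, $f$ is orientation preserving at $x$ iff $f|_{W_i}$ is. The lemma now follows formally: if $f$ is orientation preserving, then applying the equivalence to every $x\in W_i$ shows each $f|_{W_i}$ is orientation preserving; conversely, if each $f|_{W_i}$ is orientation preserving, then for an arbitrary $x\in U$ we pick $i$ with $x\in W_i$ and conclude that $f$ is orientation preserving at $x$.

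The only genuinely delicate point is the verification of the excision hypotheses together with the compatibility $\phi_U^x=\iota_\ast\circ\phi_{W_i}^x$; everything else is a routine diagram chase. I expect no real obstacle here, since the two defining isomorphisms $\phi_U^x$ and $\phi_{W_i}^x$ are built from the \emph{same} translation $t_x$ followed by excisions from the common pair $(\mR^n,\mR^n-\{x\})$, so the inclusion-induced map $\iota_\ast$ identifies them as claimed.
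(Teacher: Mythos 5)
Your proof is correct: the reduction to the pointwise condition, the verification of the excision hypotheses for $(W_i,W_i-\{x\})\hookrightarrow(U,U-\{x\})$, the compatibility $\phi_U^x=\iota_\ast\circ\phi_{W_i}^x$ obtained by functoriality through the common pair $(\mR^n,\mR^n-\{x\})$, and the final naturality chase all hold up, and openness of $f(W_i)$ is immediate since $f$ is a homeomorphism onto the open set $V$. The paper states this lemma without proof, treating it as a standard fact, and your argument is exactly the standard one implicit in the paper's definition, so nothing needs to be changed.
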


An atlas $\{\varphi_i:U_i\str U'_i\mid i\in I\}$ of an
$n$-dimensional manifold, $n\geq 1$, is \emph{oriented} when for
every $i,j\in I$, the homeomorphism
\[
\varphi_i\circ\varphi_j^{-1}:\varphi_j[U_i\cap
U_j]\str\varphi_i[U_i\cap U_j]
\]
is orientation preserving. A manifold possessing such an atlas is
\emph{orientable}. An oriented atlas is \emph{maximal} when it
cannot be enlarged to an oriented atlas of the manifold by adding
another chart.

Two oriented atlases $\{\varphi_i:U_i\str U'_i\mid i\in I\}$ and
$\{\psi_j:V_j\str V'_j\mid j\in J\}$ of the same manifold are
\emph{equivalent} when, for every $i\in I$ and every $j\in J$, the
homeomorphism
\[
\varphi_i\circ\psi_j^{-1}:\psi_j[U_i\cap V_j]\str\varphi_i[U_i\cap
V_j]
\]
is orientation preserving (cf.\ \cite[Definition~21.11]{T08}).

\begin{prop}\label{union}
If two oriented atlases of a manifold are equivalent, then their
union is an oriented atlas of this manifold.
\end{prop}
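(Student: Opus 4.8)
The plan is to check the definition of oriented atlas directly for the union of the two given atlases. Writing $\mathcal{A}=\{\varphi_i:U_i\str U'_i\mid i\in I\}$ and $\mathcal{B}=\{\psi_j:V_j\str V'_j\mid j\in J\}$ for the two equivalent oriented atlases, their union is at once an atlas, since the domains already cover the manifold; so the entire content is to show that every transition homeomorphism between two charts of $\mathcal{A}\cup\mathcal{B}$ is orientation preserving. Because that condition only ever constrains pairs of charts, a four-case pairwise verification is exactly what is needed, and I would not expect to invoke Lemma~\ref{lema trans} or any closure of orientation preservation under composition.

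First I would dispose of the two ``homogeneous'' cases: a transition $\varphi_i\circ\varphi_k^{-1}$ between two charts of $\mathcal{A}$ is orientation preserving precisely because $\mathcal{A}$ is an oriented atlas, and symmetrically a transition $\psi_j\circ\psi_l^{-1}$ is orientation preserving because $\mathcal{B}$ is. For a mixed pair, the transition $\varphi_i\circ\psi_j^{-1}$ on $\psi_j[U_i\cap V_j]$ is orientation preserving by the very definition of equivalence of the two atlases. The one remaining case is the mixed transition taken in the opposite order, $\psi_j\circ\varphi_i^{-1}$ on $\varphi_i[U_i\cap V_j]$; here I would observe that this map is literally the inverse of $\varphi_i\circ\psi_j^{-1}$, which the equivalence hypothesis has already placed among the orientation preserving maps.

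Thus the single substantive fact to be supplied is that the inverse of an orientation preserving homeomorphism of open subsets of $\mR^n$ is again orientation preserving, and this is the step I expect to carry the weight of the argument, even though it is short. Unwinding the homological definition, $f:U\str V$ being orientation preserving says that for each $x\in U$ the composite automorphism of $H_n(\mR^n,\mR^n\mn\{0\})\cong\mZ$ assembled from the canonical translation/excision isomorphisms and from $f_\ast$ is the identity. Running the same recipe for $f^{-1}$ at the point $f(x)$ produces the automorphism that uses $(f_\ast)^{-1}$ in place of $f_\ast$ and interchanges the roles of the two canonical isomorphisms; a direct inspection shows it to be exactly the inverse of the automorphism attached to $f$ at $x$, hence the inverse of the identity, hence again the identity. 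Therefore $f^{-1}$ is orientation preserving, the fourth case closes, and all four cases together yield that $\mathcal{A}\cup\mathcal{B}$ is an oriented atlas. As a by-product, this also shows that the equivalence of oriented atlases, though stated in only one order, is in fact symmetric.
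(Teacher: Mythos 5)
Your proof is correct; note that the paper states Proposition~\ref{union} without any proof at all, and your four-case check of the transition maps is precisely the verification the authors leave implicit, so the approach matches the intended one. The single substantive ingredient you isolate --- that $f^{-1}$ is orientation preserving whenever $f$ is, because the homological automorphism attached to $f^{-1}$ at $f(x)$ is the inverse of the one attached to $f$ at $x$, hence again the identity --- is exactly right, and you correctly observe that Lemma~\ref{lema trans} is needed only for transitivity in Proposition~\ref{equivalence}, not here.
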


With the help of Lemma~\ref{lema trans} for transitivity, we can
prove the following.

\begin{prop}\label{equivalence}
The above relation is an equivalence relation on the set of
oriented atlases of an orientable manifold.
\end{prop}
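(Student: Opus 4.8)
The plan is to verify the three defining properties of an equivalence relation---reflexivity, symmetry and transitivity---on the set of oriented atlases of the given orientable manifold. Reflexivity and symmetry will be essentially immediate from the homological definition of an orientation preserving homeomorphism recalled just above, whereas transitivity is the only substantial point and is precisely where Lemma~\ref{lema trans} is needed.

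For reflexivity, I would observe that equivalence of an oriented atlas $\{\varphi_i:U_i\str U'_i\mid i\in I\}$ with itself amounts to the requirement that $\varphi_i\circ\varphi_j^{-1}$ be orientation preserving on $\varphi_j[U_i\cap U_j]$ for all $i,j\in I$; but this is exactly the condition defining an \emph{oriented} atlas, so reflexivity holds by hypothesis. For symmetry, suppose $\{\varphi_i\}_{i\in I}$ is equivalent to $\{\psi_j\}_{j\in J}$. Since $\psi_j\circ\varphi_i^{-1}=(\varphi_i\circ\psi_j^{-1})^{-1}$, it suffices to note that the inverse of an orientation preserving homeomorphism is again orientation preserving: in the defining chain of homology isomorphisms, the isomorphism attached to $f^{-1}$ is the inverse of the one attached to $f$, and the inverse of the identity is the identity.

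The heart of the matter is transitivity. I would assume $\mathcal{A}=\{\varphi_i:U_i\str U'_i\mid i\in I\}$ is equivalent to $\mathcal{B}=\{\psi_j:V_j\str V'_j\mid j\in J\}$, and $\mathcal{B}$ is equivalent to $\mathcal{C}=\{\chi_k:W_k\str W'_k\mid k\in K\}$, and must show that $\varphi_i\circ\chi_k^{-1}$ is orientation preserving on $\chi_k[U_i\cap W_k]$ for all $i\in I$ and $k\in K$. The naive idea of writing $\varphi_i\circ\chi_k^{-1}=(\varphi_i\circ\psi_j^{-1})\circ(\psi_j\circ\chi_k^{-1})$ fails because no single index $j$ need cover the overlap $U_i\cap W_k$; this is the main obstacle, and it is resolved by localizing. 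Since the domains $V_j$ cover the manifold, the open sets $\chi_k[U_i\cap V_j\cap W_k]$, for $j\in J$, form an open cover of $\chi_k[U_i\cap W_k]$. On each such piece the restriction of $\varphi_i\circ\chi_k^{-1}$ coincides with the composition of the restrictions of $\varphi_i\circ\psi_j^{-1}$ and $\psi_j\circ\chi_k^{-1}$, both of which are orientation preserving by hypothesis (restriction to an open subset preserves the property, as the defining condition is imposed pointwise). Because the composition of orientation preserving homeomorphisms is again orientation preserving---read off from the composite chain of homology isomorphisms, a composite of identities being the identity---the map $\varphi_i\circ\chi_k^{-1}$ is orientation preserving on every member of this open cover. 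Lemma~\ref{lema trans} then lets me conclude that $\varphi_i\circ\chi_k^{-1}$ is orientation preserving on all of $\chi_k[U_i\cap W_k]$, which is exactly the equivalence of $\mathcal{A}$ and $\mathcal{C}$.
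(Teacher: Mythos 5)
Your proof is correct and follows exactly the route the paper intends: the paper gives no detailed argument, remarking only that the proposition is proved ``with the help of Lemma~\ref{lema trans} for transitivity,'' and your localization of $\varphi_i\circ\chi_k^{-1}$ to the open cover $\{\chi_k[U_i\cap V_j\cap W_k]\mid j\in J\}$ is precisely the step for which that lemma is designed. Reflexivity and symmetry are handled as the paper implicitly assumes, so your write-up simply supplies the details the paper omits.
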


If an orientable manifold is connected, then this equivalence
relation has exactly two classes. As a corollary of Propositions
\ref{union} and \ref{equivalence}, we have the following.

\begin{prop}\label{maximal}
Every oriented atlas could be enlarged to a unique maximal
oriented atlas.
\end{prop}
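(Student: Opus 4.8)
The plan is to take, for a given oriented atlas $\mathcal{A}$, the union $\mathcal{M}$ of all oriented atlases of the manifold that are equivalent to $\mathcal{A}$, and to show that this $\mathcal{M}$ is the desired unique maximal oriented atlas enlarging $\mathcal{A}$. Since the relation from Proposition~\ref{equivalence} is reflexive, $\mathcal{A}$ itself belongs to this family, so $\mathcal{A}\subseteq\mathcal{M}$ and $\mathcal{M}$ is indeed an enlargement.

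First I would verify that $\mathcal{M}$ is an oriented atlas. Its charts clearly cover the manifold, since already those of $\mathcal{A}$ do. The point to check is that any two charts $\varphi$ and $\psi$ of $\mathcal{M}$ are compatible, i.e.\ that their transition homeomorphism is orientation preserving. Now $\varphi$ lies in some oriented atlas $\mathcal{B}\sim\mathcal{A}$ and $\psi$ in some oriented atlas $\mathcal{C}\sim\mathcal{A}$; by transitivity (Proposition~\ref{equivalence}) we get $\mathcal{B}\sim\mathcal{C}$, and then Proposition~\ref{union} tells us that $\mathcal{B}\cup\mathcal{C}$ is an oriented atlas, inside which $\varphi$ and $\psi$ are therefore compatible. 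This is the heart of the argument, and the step where the two preceding propositions are really used. In particular, because every chart of $\mathcal{M}$ is compatible with every chart of $\mathcal{A}$, we also record that $\mathcal{M}\sim\mathcal{A}$.

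For maximality, suppose a chart $\psi\notin\mathcal{M}$ could be added so that $\mathcal{M}\cup\{\psi\}$ is still an oriented atlas. Then every chart of $\mathcal{M}\cup\{\psi\}$ is compatible with every chart of $\mathcal{A}\subseteq\mathcal{M}$, so $\mathcal{M}\cup\{\psi\}\sim\mathcal{A}$; but then this atlas belongs to the family whose union is $\mathcal{M}$, forcing $\psi\in\mathcal{M}$, a contradiction. Hence $\mathcal{M}$ cannot be enlarged and is maximal.

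Finally, for uniqueness, let $\mathcal{N}$ be any maximal oriented atlas containing $\mathcal{A}$. As above, $\mathcal{N}\sim\mathcal{A}$, so $\mathcal{N}$ is one of the atlases united to form $\mathcal{M}$, whence $\mathcal{N}\subseteq\mathcal{M}$. If the inclusion were proper, any $\varphi\in\mathcal{M}$ not in $\mathcal{N}$ would yield an oriented atlas $\mathcal{N}\cup\{\varphi\}$ (a subfamily of the oriented atlas $\mathcal{M}$, still covering the manifold since $\mathcal{N}$ does), contradicting the maximality of $\mathcal{N}$. Therefore $\mathcal{N}=\mathcal{M}$. I expect the only genuine subtlety to be the compatibility step for charts coming from two \emph{different} equivalent atlases, which is precisely what Propositions~\ref{union} and~\ref{equivalence} are designed to supply.
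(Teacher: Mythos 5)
Your proof is correct and takes essentially the same route as the paper: the paper states Proposition~\ref{maximal} precisely as a corollary of Propositions~\ref{union} and~\ref{equivalence}, with the intended argument being exactly yours---take the union of all oriented atlases equivalent to the given one, using transitivity plus Proposition~\ref{union} for pairwise compatibility of charts. The maximality and uniqueness checks you spell out are just the routine details the paper leaves implicit.
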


An \emph{orientation} of a $0$-dimensional manifold $M$ is a
function $\varepsilon:M\str\{-1,1\}$. For $n\geq 1$, an
\emph{orientation} of an orientable $n$-dimensional manifold $M$
is a choice of its maximal oriented atlas $\mathcal{O}_M$. The
orientation opposite to $\mathcal{O}_M$ is obtained by composing
every chart in it by a reflection of $\mR^n$, for example with
respect to $\pi_n$.

The \emph{orientation of the product} of two oriented manifolds
$M$ and $N$ is given by the maximal oriented atlas containing the
products of charts in $\mathcal{O}_M$ with charts in
$\mathcal{O}_N$. A homeomorphism $f$ between two oriented
$n$-dimensional manifolds $M$ and $N$ is \emph{orientation
preserving} when for every chart $\varphi:U\str U'$ of $M$, for
$g$ being the restriction of $f^{-1}$ to $f[U]$, we have that
\[
\varphi\in \mathcal{O}_M\quad\mbox{\rm iff}\quad \varphi\circ g\in
\mathcal{O}_N.
\]
An embedding of an $n$-dimensional manifold into an
$n$-dimensional manifold is \emph{orientation preserving} when its
restriction to the image is such. An \emph{orientation reversing}
homeomorphism (embedding) from $M$ to $N$ is an orientation
preserving homeomorphism (embedding) from $M$ to $N$ with the
opposite orientation.

An $n$-dimensional $\partial$-manifold, for $n\geq 1$, is
orientable when its interior is orientable and an orientation of
the interior is an orientation of the $\partial$-manifold. We
denote the orientation of an oriented $\partial$-manifold $M$
again by $\mathcal{O}_M$. We say that an oriented $n$-dimensional
$\partial$-manifold $M\subseteq\mR^n$ is \emph{oriented by the
identity} when its orientation contains the charts $\mj_U:U\str U$
for every open $U\subseteq \Int M$.

The orientation of an oriented $\partial$-manifold \emph{induces
the orientation} of its boundary in the following way. For an
oriented $1$-dimensional $\partial$-manifold $M$ and $x\in\partial
M$, we orient $x$ by $\varepsilon(x)=1$, when for a neighborhood
$U$ of $x$ in $M$ there is a chart $\varphi:U\str U'$,
$U'\subseteq \{y\in\mR\mid y\geq 0\}$, such that its restriction
to $\Int U$ is in $\mathcal{O}_M$. Otherwise, we orient $x$ by
$\varepsilon(x)=-1$. For example, if $I=[0,1]$ is oriented by the
identity, then $\varepsilon(0)=1$ and $\varepsilon(1)=-1$. (Note
that this is opposite to the orientation given in \cite{K13} but
it is consistent with the orientation given in \cite{K03}.)

An orientation of the sphere $S^0$ is taken to be induced from an
orientation of the interval $[-1,1]$. Hence, in every orientation
of $S^0$, one point is positive and the other is negative.

For $n\geq 2$, an oriented $n$-dimensional $\partial$-manifold $M$
induces the orientation of $\partial M$ given by the maximal
oriented atlas containing the restriction of $\varphi$ to
$\partial U$ for every chart $\varphi:U\str U'$, $U'\subseteq
\pi_n^+$, whose restriction to $\Int U$ belongs to
$\mathcal{O}_M$. For example, if $\pi_n^+$ is oriented by the
identity, then its boundary $\pi_n$ is oriented by the identity.
If $\pi_n^-=\{(x_1,\ldots,x_n)\in\mR^n\mid x_n\leq 0\}$ is
oriented by the identity, then its boundary $\pi_n$ is oriented by
the maximal oriented atlas containing the restriction of the
reflection $g:\pi_n\str\pi_n$, given by
\[
g(x_1,x_2,\ldots,x_{n-1},0)=(-x_1,x_2,\ldots,x_{n-1},0),
\]
to every open $U\subseteq \pi_n$, i.e.\ it has the opposite
orientation from the one in the previous example.

Let $\Sigma_M$ be a collection of connected components of the
boundary of an $n$-dimensional $\partial$-manifold $M$. An
embedding of an oriented $(n-1)$-manifold into $M$, whose image is
$\Sigma_M$, is \emph{orientation preserving} (\emph{reversing})
when its restriction to the image, with respect to the induced
orientation of $\Sigma_M$, is such.

\vspace{1ex}

We discuss now pushouts in the category of topological spaces, and
in particular the case involving $\partial$-manifolds and oriented
$\partial$-manifolds. For topological spaces $X$, $Y$ and $Z$ and
continuous functions $f:Z\str X$ and $g:Z\str Y$, let $\asymp$ be
the smallest equivalence relation on the disjoint union
\[
X+Y=(X\times\{0\})\cup(Y\times\{1\})
\]
such that for every $z\in Z$ we have that
$(f(z),0)\asymp(g(z),1)$.

For functions $i:X\str (X+Y)/\!\asymp$ and $j:Y\str
(X+Y)/\!\asymp$ defined by $i(x)=[(x,0)]_\asymp$ and
$j(y)=[(y,1)]_\asymp$, let the topological space $X+_{f,g}Y$ be
given by the set $(X+Y)/\!\asymp$ with the topology
\[
\mathcal{T}=\{U\subseteq (X+Y)/\!\asymp\;\;\mid
i^{-1}[U]\;\mbox{\rm is open in }X\;\mbox{\rm and }
j^{-1}[U]\;\mbox{\rm is open in }Y\}.
\]
This is a \emph{pushout} in the category of topological spaces,
i.e.\ we have the commutative diagram
\begin{center}
\begin{picture}(120,50)(0,-5)

\put(0,40){\makebox(0,0){$Z$}} \put(0,0){\makebox(0,0){$X$}}
\put(75,40){\makebox(0,0){$Y$}}
\put(80,0){\makebox(0,0){$X+_{f,g}Y$}}

\put(35,47){\makebox(0,0){$g$}} \put(35,-7){\makebox(0,0){$i$}}
\put(-8,20){\makebox(0,0){$f$}} \put(83,20){\makebox(0,0){$j$}}

\put(15,40){\vector(1,0){40}} \put(15,0){\vector(1,0){40}}
\put(0,30){\vector(0,-1){20}} \put(75,30){\vector(0,-1){20}}
\end{picture}
\end{center}
with the following universal property. For every pair of
continuous functions $i':X\str A$ and $j':Y\str A$ such that
$i'\circ f=j'\circ g$, there is a unique continuous function
$h:X+_{f,g}Y\str A$ such that $h\circ i=i'$ and $h\circ j=j'$.

Let $M$ and $N$ be two $n$-dimensional $\partial$-manifolds and
let $\Sigma_M$ and $\Sigma_N$ be collections of connected
components of $\partial M$ and $\partial N$ respectively, such
that $\Sigma_M$ and $\Sigma_N$ are both homeomorphic to an
$(n-1)$-dimensional manifold $\Sigma$. Let $f:\Sigma\str M$ and
$g:\Sigma\str N$ be two embeddings whose images are $\Sigma_M$ and
$\Sigma_N$ respectively.

\begin{prop}
The space $M+_{f,g}N$ is an $n$-dimensional $\partial$-manifold.
\end{prop}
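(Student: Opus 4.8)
The plan is to verify directly the three defining conditions of an $n$-dimensional $\partial$-manifold for $X=M+_{f,g}N$, equipped with its quotient topology: that it is locally modelled on the half-space $\pi^+_n$, Hausdorff, and second countable. First I would record that the canonical maps $i\colon M\str X$ and $j\colon N\str X$ restrict to open embeddings on $M\setminus\Sigma_M$ and $N\setminus\Sigma_N$, since a point $(x,0)$ with $x\notin\Sigma_M$ is related by $\asymp$ to nothing, and likewise on the $N$-side. Consequently every point of $X$ lying in the image of $\Int M$, $\Int N$, $\partial M\setminus\Sigma_M$, or $\partial N\setminus\Sigma_N$ inherits a chart into $\pi^+_n$ directly from $M$ or $N$. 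This disposes of all points except those on the \emph{seam} $i[\Sigma_M]=j[\Sigma_N]$, and simultaneously identifies the boundary of $X$ as $(\partial M\setminus\Sigma_M)\cup(\partial N\setminus\Sigma_N)$.

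The heart of the argument is the local structure at a seam point, and here I would appeal to the topological collar neighborhood theorem of M.~Brown. Each of $\Sigma_M$, $\Sigma_N$, being a closed union of boundary components, admits a collar, i.e.\ an embedding $c_M\colon\Sigma\times[0,1)\str M$ onto an open neighborhood of $\Sigma_M$ with $c_M(\sigma,0)=f(\sigma)$, and similarly $c_N\colon\Sigma\times[0,1)\str N$ with $c_N(\sigma,0)=g(\sigma)$. Composing with $i$ and $j$, I would define a map $C\colon\Sigma\times(-1,1)\str X$ by $C(\sigma,t)=i(c_M(\sigma,t))$ for $t\geq 0$ and $C(\sigma,t)=j(c_N(\sigma,-t))$ for $t\leq 0$; the two formulas agree at $t=0$ precisely because $(f(\sigma),0)\asymp(g(\sigma),1)$, so by the pasting lemma $C$ is a well-defined continuous injection. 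Its image is open in $X$, as its preimages under $i$ and $j$ are exactly the two open collars. Since $\Sigma$ is an $(n-1)$-manifold, $\Sigma\times(-1,1)$ is an $n$-manifold \emph{without} boundary; composing charts of $\Sigma$ with the identity on $(-1,1)$ then yields charts of $X$ around seam points into $\Int\pi^+_n\cong\mR^n$, so seam points are interior points of $X$ and the local half-space condition holds everywhere.

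It remains to check the point-set hypotheses. Second countability follows once the local structure is in place: countable atlases of $M$ and $N$, transported through $i$, $j$, and the glued collar $C$, furnish a countable atlas of $X$. For the Hausdorff property I would use that $\Sigma_M$ and $\Sigma_N$ are closed in $M$ and $N$---each is a clopen union of components of the closed set $\partial M$ (resp.\ $\partial N$), these components being open since a manifold is locally connected---together with the standard fact that the adjunction of two normal (in particular metrizable) Hausdorff spaces along closed subsets via a homeomorphism is again Hausdorff; points off the seam are separated by the charts above, and a seam point is separated from any other point inside a suitably shrunk glued collar. The main obstacle I expect is the seam analysis: one must verify that $C$ is a homeomorphism onto an \emph{open} subset of $X$, i.e.\ that the quotient topology near the seam coincides with the product topology on $\Sigma\times(-1,1)$. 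This is exactly the point at which the openness of the collar images and the collaring theorem are indispensable, and it is what converts the set-theoretic pushout into a genuine manifold chart.
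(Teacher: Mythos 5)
Your proof is correct, but it takes a genuinely different route from the paper's. The paper's own proof is a two-line reduction: it first observes that deleting a connected component $K$ of $\partial M$ leaves an $n$-dimensional $\partial$-manifold with boundary $\partial M-K$ (this adapts the gluing-along-all-of-the-boundary situation to gluing along the subcollections $\Sigma_M$, $\Sigma_N$), and then delegates the entire local analysis at the seam to \cite[Chapter~VIII, Proposition~1.11]{D72}. You instead make the argument self-contained at the cost of a heavier but standard input, Brown's topological collaring theorem (available here because your manifolds are second countable Hausdorff, hence metrizable, and because $\Sigma_M$ is clopen in $\partial M$, so the collar of $\partial M$ restricts to one of $\Sigma_M$). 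Your bicollar $C:\Sigma\times(-1,1)\str M+_{f,g}N$ is exactly the right device, and your openness verification is sound at the crucial point: for $W$ open in $\Sigma\times(-1,1)$, injectivity of $i$ and $j$ gives $i^{-1}[C[W]]=c_M\bigl[W\cap(\Sigma\times[0,1))\bigr]$ and similarly for $j$, both open since the collars are homeomorphisms onto open sets; this is precisely where the quotient topology is matched with the product topology, the step the paper hides inside Dold's proposition. Your point-set checks also go through: unions of components of $\partial M$ are clopen in $\partial M$ by local connectedness, hence closed in $M$; normality of the metrizable pieces together with closedness of the quotient map yields $T_1$ and Hausdorff; and second countability follows from the countable open cover by $i[M\setminus\Sigma_M]$, $j[N\setminus\Sigma_N]$ and the bicollar, each second countable. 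What the paper's route buys is brevity; what yours buys is an explicit chart at the seam and, as a by-product, the identification of the boundary of $M+_{f,g}N$ with the images of $\partial M\setminus\Sigma_M$ and $\partial N\setminus\Sigma_N$, which the paper never states but uses implicitly afterwards. One remark worth making explicit in your write-up: some form of collaring is unavoidable in a direct proof, since at a seam point the two half-space charts are glued along an arbitrary homeomorphism of their boundary pieces and cannot simply be juxtaposed (also, to see that collar points with $t>0$ avoid $\partial M$, you tacitly use invariance of the boundary); this is the content your collar argument makes visible and Dold's proposition encapsulates.
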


\begin{proof}
Note that for an $n$-dimensional $\partial$-manifold $M$ we have
that if $K$ is a connected component of $\partial M$, then $M-K$
is an $n$-dimensional $\partial$-manifold whose boundary is
$\partial M-K$. Then we rely on \cite[Chapter~VIII,
Proposition~1.11]{D72}.
\end{proof}

In the case when $M$ and $N$ are two orientable $n$-dimensional
$\partial$-manifolds and $\Sigma_M$, $\Sigma_N$ and $\Sigma$ are
as above, let $f:\Sigma\str M$ be an orientation preserving
embedding whose image is $\Sigma_M$, and let $g:\Sigma\str N$ be
an orientation reversing embedding whose image is $\Sigma_N$. Then
the $n$-dimensional $\partial$-manifold $M+_{f,g}N$ is orientable.

For charts $\varphi:U\str U'$ and $\psi:V\str V'$ of $M$ and $N$
respectively, such that there is $\Gamma\subseteq\Sigma$, possibly
empty, with $\partial U=f[\Gamma]$ and $\partial V=g[\Gamma]$, let
$\varphi+_{f,g}\psi$ be the homeomorphism from $U+_{f,g} V$ to
$U'+_{\varphi\circ f,\psi\circ g}V'$, where by $f$ and $g$ we mean
their restrictions to $\Gamma$. This homeomorphism exists by the
universal property of pushout. We define the orientation of
$M+_{f,g}N$ to be the maximal oriented atlas containing
$\varphi+_{f,g}\psi$ for every pair of charts $\varphi$ and $\psi$
as above such that the restriction of $\varphi$ to $\Int U$ is in
$\mathcal{O}_M$ and the restriction of $\psi$ to $\Int V$ is in
$\mathcal{O}_N$. In this way the restrictions to the interiors of
the embeddings $i:M\str M+_{f,g}N$ and $j:N\str M+_{f,g}N$ are
orientation preserving.

Locally, the situation is completely illustrated by the following
example. For $n\geq 2$, let $\pi_n^+$ and $\pi_n$ be oriented by
the identity. For $f,g:\pi_n\str \pi_n^+$ being orientation
preserving, respectively orientation reversing, embeddings, with
$\pi_n$ as the image, consider the $n$-dimensional manifold
$\pi_n^+ +_{f,g} \pi_n^+$. Without loss of generality, we may
assume that $f$ is the inclusion and that $g$ is the reflection
\[
g(x_1,x_2,\ldots,x_{n-1},0)=(-x_1,x_2,\ldots,x_{n-1},0).
\]

Let $\mathbf{g}:\pi_n^+\str\pi_n^-$ be the composition of two
reflections of $\mR^n$---one with respect to the hyperplane
$\pi_1=\{(x_1,\ldots,x_n)\in\mR^n\mid x_1= 0\}$ and the other with
respect to the hyperplane $\pi_n$. Note that $\mathbf{g}$ is
orientation preserving and its restriction to $\pi_n$ is the
reflection $g:\pi_n\str\pi_n$ from above. Hence, $\mathbf{g}$
reverses the orientation of the boundary. However, the composition
$\mathbf{g}\circ g:\pi_n\str \pi_n^-$ is the inclusion.

Now we have the following commutative diagram
\begin{center}
\begin{picture}(200,110)(0,-5)

\put(0,90){\makebox(0,0){$\pi_n^+$}}
\put(0,10){\makebox(0,0){$\pi_n^+$}}
\put(200,90){\makebox(0,0){$\pi_n^+$}}
\put(200,10){\makebox(0,0){$\pi_n^-$}}
\put(100,90){\makebox(0,0){$\pi_n$}}
\put(105,50){\makebox(0,0){$\pi_n^+ +_{f,g} \pi_n^+$}}
\put(103,10){\makebox(0,0){$\mR^n$}}

\put(50,97){\makebox(0,0){$\supseteq$}}
\put(150,17){\makebox(0,0){$\supseteq$}}
\put(50,17){\makebox(0,0){$\subseteq$}}
\put(150,97){\makebox(0,0){$g$}} \put(-7,50){\makebox(0,0){$\mj$}}
\put(207,50){\makebox(0,0){$\mathbf{g}$}}
\put(45,57){\makebox(0,0){$i$}} \put(155,57){\makebox(0,0){$j$}}
\put(107,30){\makebox(0,0){$h$}}

\put(85,90){\vector(-1,0){70}} \put(115,90){\vector(1,0){70}}
\put(0,75){\vector(0,-1){50}} \put(200,75){\vector(0,-1){50}}
\multiput(100,40)(0,-2){10}{\circle*{.1}}
\put(100,22){\vector(0,-1){2}} \put(15,10){\vector(1,0){70}}
\put(185,10){\vector(-1,0){70}} \put(15,75){\vector(3,-1){60}}
\put(185,75){\vector(-3,-1){55}}

\end{picture}
\end{center}
which by the universal property of pushout leads to the
homeomorphism \linebreak ${h:\pi_n^+ +_{f,g} \pi_n^+\str \mR^n}$.
This homeomorphism is orientation preserving when $\mR^n$ is
oriented by the identity.

\subsection{Some topological remarks}\label{topological}

The classical results formulated in this section are used in
Section~\ref{why}. The following theorem is proved for $n=2$ by
Rad\' o, \cite{R24}, for $n=3$ by Moise, \cite{M52}, for $n=4$ by
Quinn, \cite{Q82}, for $n\geq 5$ by Kirby, \cite{K69}, and it is
trivial for $n=1$.

\begin{thm}[Annulus conjecture, $AC_n$]
Let $f,g:S^{n-1}\str \mathbf{R}^n$ be disjoint, locally flat
embeddings with $f[S^{n-1}]$ inside the bounded component of
$\mathbf{R}^n-g[S^{n-1}]$. Then the closed region bounded by
$f[S^{n-1}]$ and $g[S^{n-1}]$ is homeomorphic to $S^{n-1}\times
I$.
\end{thm}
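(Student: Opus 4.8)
The plan is to prove $AC_n$ not by a single uniform argument---none is available---but by cases in $n$ resting on distinct bodies of classical work, organized around one convenient reformulation. Following Brown and Gluck, $AC_n$ is equivalent to the \emph{stable homeomorphism conjecture} $SHC_n$: every orientation-preserving self-homeomorphism of $\mathbf{R}^n$ is stable, i.e.\ a finite composite of homeomorphisms each restricting to the identity on some nonempty open set. I would work through $SHC_n$ because stability is a local, composable property, and because once the ambient homeomorphisms straightening $f[S^{n-1}]$ and $g[S^{n-1}]$ are known to be stable, the closed region between the two spheres can be collared and identified with $S^{n-1}\times I$.

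First I would dispose of the low dimensions. For $n=1$ the statement is immediate. For $n=2$ I would invoke the Jordan--Schoenflies theorem (with triangulability of surfaces after Rad\'o) to see that each embedded circle bounds a disc, so that removing the inner disc from the outer one yields $S^1\times I$. For $n=3$ the same template applies using Moise's theorem that every $3$-manifold is triangulable together with the validity of the Hauptvermutung in dimension $3$; this reduces the claim to the PL annulus theorem, which is provable directly by handle manipulation.

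The conceptual heart is the range $n\geq 5$, where I would follow Kirby's torus trick. The key step is to transfer a homeomorphism $h$ of $\mathbf{R}^n$ to the $n$-torus: using an immersion of the punctured torus $T^n\setminus\{\mathrm{pt}\}$ into $\mathbf{R}^n$, one manufactures a homeomorphism $\bar h$ of $T^n$ that is already stable near the puncture and can be arranged homotopic to the identity. Passing to a suitable finite cover and applying the surgery-theoretic classification of homotopy tori (Hsiang--Shaneson and Wall), together with Kirby--Siebenmann smoothing theory, one shows $\bar h$ is isotopic to a PL---hence stable---homeomorphism; unwinding the construction forces $h$ itself to be stable, which gives $SHC_n$ and therefore $AC_n$. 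The main obstacle in this range is precisely the surgery input: controlling the normal invariant and the structure set of the torus in the TOP category, and checking that the relevant obstructions die after passage to a finite cover.

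Finally, for $n=4$ I would appeal to Quinn, who extended topological transversality and handle decompositions to $4$-manifolds on the basis of Freedman's solution of the topological disc-embedding problem in dimension $4$. This is what permits the $SHC$/annulus machinery to run in the one remaining dimension, and I expect it to be the deepest and most delicate ingredient of the whole theorem---the single hardest part of the argument.
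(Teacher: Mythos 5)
The paper offers no proof of this theorem at all: it simply attributes it, dimension by dimension, to Rad\'o ($n=2$), Moise ($n=3$), Quinn ($n=4$), Kirby ($n\geq 5$), notes triviality for $n=1$, and separately records the Brown--Gluck equivalence with the stable homeomorphism conjecture. Your sketch reproduces exactly this decomposition and routes it through the same Brown--Gluck reformulation, correctly identifying the standard ingredients (torus trick plus Hsiang--Shaneson/Wall for $n\geq 5$, Freedman--Quinn for $n=4$), so it is essentially the same approach as the paper's, expanded into an accurate survey of the classical arguments.
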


A homeomorphism from $\mathbf{R}^n$ to $\mathbf{R}^n$ or from
$S^n$ to $S^n$ is called \emph{stable}, when it is equal to a
finite composition of homeomorphisms each of which is the identity
on some non-empty open set. Brown and Gluck, \cite{BG64}, proved
that Annulus conjecture is equivalent to the following statement,
which is hence a theorem.

\begin{thm}[Stable homeomorphism conjecture]\label{SHC}
Any orientation preserving homeomorphism of $\mathbf{R}^n$ is
stable.
\end{thm}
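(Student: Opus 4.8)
The plan is to obtain the statement as an immediate corollary of the equivalence of Brown and Gluck, \cite{BG64}, recorded in the paragraph preceding the theorem, together with the Annulus conjecture $AC_n$, which has already been established for every $n$ (by Rad\'o, \cite{R24}, for $n=2$; Moise, \cite{M52}, for $n=3$; Quinn, \cite{Q82}, for $n=4$; Kirby, \cite{K69}, for $n\geq 5$; and trivially for $n=1$). Since $AC_n$ and the Stable homeomorphism conjecture are equivalent assertions and $AC_n$ is a theorem, the Stable homeomorphism conjecture is a theorem as well. In this sense there is no separate argument to supply here: the entire content of the statement is absorbed into the cited equivalence, and within the framework of this paper it is a one-line deduction.

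If one wished to unwind the nontrivial half of that equivalence, namely that $AC_n$ forces every orientation preserving homeomorphism $h$ of $\mR^n$ to be stable, the natural route would be the following. First I would reduce to a local statement, using that stability is closed under composition and that a homeomorphism which is the identity on some nonempty open set is stable by definition; it therefore suffices to arrange that $h$ agree with a stable homeomorphism near a single point. Next I would apply $AC_n$ to the round sphere $S^{n-1}$ and its locally flat image $h[S^{n-1}]$: the closed region they bound is identified with $S^{n-1}\times I$, which furnishes a collar. I would then use this collar to isotope $h$ to the identity outside a ball, exhibiting $h$ as a finite composition of homeomorphisms each fixing an open set, which is exactly stability.

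The genuinely hard input is none of the above bookkeeping but the proof of $AC_n$ itself in high dimensions, which I would not reproduce and which lies outside this paper. For $n\geq 5$ it rests on Kirby's torus trick, \cite{K69}: the local problem is transported to the torus, attacked by surgery and the structure theory of the stable homeomorphism group, and then pulled back through a covering space argument; the cases $n\leq 4$ rely on the separate classical results cited above. Accordingly, I expect the main obstacle to be entirely external to the present development, so that the statement here is correctly treated as a direct consequence of the Brown--Gluck equivalence and the established Annulus conjecture.
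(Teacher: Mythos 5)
Your proposal is correct and matches the paper exactly: the paper offers no proof beyond observing that $AC_n$ holds in all dimensions (by the cited results of Rad\'o, Moise, Quinn and Kirby) and invoking the Brown--Gluck equivalence \cite{BG64}, which is precisely your one-line deduction. Your additional sketch of how one might unwind the equivalence is extra material not present in the paper, but it does not conflict with anything there.
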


Two homeomorphisms $f,g:X\str Y$ are \emph{isotopic} when there is
a homotopy $\Phi:X\times I\str Y$ from $f$ to $g$ such that every
$\Phi_t:X\str Y$ is a homeomorphism. Such a homotopy is called
\emph{isotopy}.

\begin{thm}[Alexander]\label{Alexander} Every homeomorphism from $\mathbf{R}^n$ to
$\mathbf{R}^n$, or from $S^n$ to $S^n$, whose restriction to some
non-empty open set is the identity, is isotopic to the identity.
\end{thm}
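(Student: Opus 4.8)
The plan is to reduce everything to the classical \emph{Alexander trick}: a self-homeomorphism $g$ of the standard closed ball $B^n=\{x\in\mR^n\mid |x|\leq 1\}$ that restricts to the identity on the boundary sphere is isotopic to the identity rel boundary. The witnessing isotopy is the explicit cone
\[
G(x,s)=\begin{cases} s\,g(x/s), & |x|\leq s,\\ x, & s\leq|x|\leq 1,\end{cases}\qquad s\in(0,1],
\]
together with $G(x,0)=x$. This is well defined (the two clauses agree for $|x|=s$ because $g$ fixes the boundary), each $G_s$ is a homeomorphism fixing $\partial B^n$, and $G_1=g$, $G_0=\id$. I would first record this as a lemma; the only point needing a word is continuity at the cone point $s=0$, which follows from the uniform estimate $\sup_x|G(x,s)-x|\leq 2s$.

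For a homeomorphism $h\colon S^n\str S^n$ that is the identity on a non-empty open set $U$, I would choose a round closed ball $D\subseteq U$ small enough that $h|_D=\id$; its complementary region $D'=\overline{S^n-\Int D}$ is then a closed $n$-ball meeting $D$ in their common boundary sphere $\partial D'=\partial D$. Since $h$ fixes $D$ pointwise we have $h(D)=D$, hence $h(D')=D'$, so $h|_{D'}$ is a self-homeomorphism of the ball $D'$ that is the identity on $\partial D'$. Transporting the Alexander trick through a homeomorphism $D'\cong B^n$ (which carries $\partial D'$ to $\partial B^n$, so the conjugate of $h|_{D'}$ is again the identity on $\partial B^n$) yields an isotopy of $D'$ from $h|_{D'}$ to $\id_{D'}$ rel boundary. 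Gluing it with the constant identity isotopy on $D$---the two agree on $\partial D$ at every parameter---produces, by the pasting lemma, a continuous isotopy of $S^n$ from $h$ to $\id_{S^n}$.

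For a homeomorphism $h\colon\mR^n\str\mR^n$ that is the identity on a non-empty open set $U$, I would instead scale outward. Fix $p\in U$ and $\epsilon>0$ with $B(p,\epsilon)\subseteq U$, and set $\phi_t(x)=p+t(x-p)$ for $t\geq 1$. The conjugates $h_t=\phi_t\circ h\circ\phi_t^{-1}$ are homeomorphisms with $h_1=h$, and since $h=\id$ on $B(p,\epsilon)$ one checks directly that $h_t=\id$ on $B(p,t\epsilon)$. Thus as $t\to\infty$ the identity region exhausts $\mR^n$, so on every compact set $h_t$ is eventually the identity. Reparametrising $t\in[1,\infty)$ by $s\in[0,1)$ and declaring the value at $s=1$ to be $\id$ gives the required isotopy, the endpoint being the only place where continuity must be argued: near any point $x_0$ one has $h_t=\id$ on a whole neighbourhood of $x_0$ for all large $t$, so the family reaches the identity continuously.

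The one genuine subtlety in all three steps is continuity of the constructed isotopy at the degenerate parameter---the cone point $s=0$ in the Alexander trick and the limit $s=1$ in the $\mR^n$ case---where the map is given by a separate clause; in each case this reduces to the observation that the identity region swallows every compact set in the limit, so I expect no real topological obstruction beyond this bookkeeping. Fittingly for a base case in this chain of lemmata, the argument is entirely elementary: it uses neither local flatness nor the high-dimensional input invoked for the annulus and stable-homeomorphism conjectures, and it does not even require $h$ to be orientation preserving.
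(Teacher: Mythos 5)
Your proposal is correct, and in fact it supplies a proof where the paper offers none: the paper states Theorem~\ref{Alexander} as a classical quoted result (in contrast to the annulus and stable homeomorphism theorems, for which it cites the literature), so there is no in-paper argument to compare against. What you give is the standard classical proof, and all three of its delicate points check out: the cone formula $G(x,s)=s\,g(x/s)$ on $|x|\leq s$ is well defined because $g$ fixes $\partial B^n$, each $G_s$ is a homeomorphism (its inverse is the same construction applied to $g^{-1}$), and your estimate $\sup_x|G(x,s)-x|\leq 2s$ correctly handles continuity at the cone point; on $S^n$, the complement of a round open ball in $U$ is again a closed ball whose boundary $h$ fixes pointwise (since $h(D)=D$ forces $h(D')=D'$), so the rel-boundary Alexander isotopy glues with the constant identity isotopy on $D$ via the pasting lemma, exactly matching the paper's Lemma~\ref{pasting}; and on $\mR^n$, the dilation conjugates $h_t=\phi_t\circ h\circ\phi_t^{-1}$ are identity on $B(p,t\epsilon)$, and joint continuity at the endpoint $s=1$ follows, as you say, because every compact set is eventually swallowed by the identity region, while joint continuity for $s<1$ is immediate from joint continuity of $(x,t)\mapsto\phi_t(h(\phi_t^{-1}(x)))$. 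Your closing remark is also apt: the hypothesis of being the identity on an open set already forces orientation preservation, and the argument is genuinely elementary, needing none of the deep input ($AC_n$, stable homeomorphisms) that the surrounding results in Section~\ref{topological} require---which is presumably why the paper felt free to quote it without proof.
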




\begin{lem}\label{compiso}
If $\Phi_t:X\str X$ is an isotopy from $f$ to $g$ and
$\Gamma_t:X\str X$ is an isotopy from $u$ to $v$, then
$\Gamma_t\circ \Phi_t$ is an isotopy from $u\circ f$ to $v\circ
g$.
\end{lem}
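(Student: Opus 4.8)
The plan is to verify directly the three defining properties of an isotopy for the map $H\colon X\times I\str X$ given by $H(x,t)=\Gamma_t(\Phi_t(x))$: that it interpolates between $u\circ f$ and $v\circ g$, that each of its levels is a homeomorphism, and that it is jointly continuous in $(x,t)$.

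First I would check the boundary values. Since $\Phi_0=f$ and $\Gamma_0=u$, we get $H_0=\Gamma_0\circ\Phi_0=u\circ f$, and since $\Phi_1=g$ and $\Gamma_1=v$, we get $H_1=\Gamma_1\circ\Phi_1=v\circ g$. Next, for each fixed $t\in I$ the maps $\Phi_t$ and $\Gamma_t$ are homeomorphisms of $X$ by hypothesis, so their composite $H_t=\Gamma_t\circ\Phi_t$ is again a homeomorphism of $X$.

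The only point that needs care is the joint continuity of $H$, and this is where I expect the sole (mild) obstacle to lie. The subtlety is that in $H(x,t)=\Gamma(\Phi(x,t),t)$ the same parameter $t$ feeds both the inner and the outer map, so one cannot simply invoke continuity of a single composite $\Gamma\circ\Phi$ of the homotopies. Instead I would factor $H$ as
\[
X\times I \xrightarrow{\;(\Phi,\,\mathrm{pr}_I)\;} X\times I \xrightarrow{\;\Gamma\;} X,
\]
where the first arrow sends $(x,t)$ to $(\Phi(x,t),t)$. Its first component is the continuous homotopy $\Phi$ and its second component is the projection onto $I$, so the first arrow is continuous; composing it with the continuous map $\Gamma\colon X\times I\str X$ yields precisely $H(x,t)=\Gamma(\Phi(x,t),t)$, which is therefore continuous.

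Putting these observations together, $H$ is a homotopy from $u\circ f$ to $v\circ g$ all of whose levels are homeomorphisms, i.e.\ an isotopy, which is exactly the claim. I anticipate no genuine difficulty beyond the bookkeeping of the shared parameter in the continuity step.
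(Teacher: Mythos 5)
Your proof is correct: the factorization of $H(x,t)=\Gamma(\Phi(x,t),t)$ as the composite $X\times I\str X\times I\str X$, $(x,t)\mapsto(\Phi(x,t),t)\mapsto\Gamma(\Phi(x,t),t)$, cleanly settles the joint continuity, which is indeed the only point needing an argument, while the boundary values and the levelwise homeomorphism property are immediate as you say. The paper actually states Lemma~\ref{compiso} with no proof at all, treating it as routine, so your write-up simply supplies the standard verification the authors chose to omit.
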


\begin{prop}\label{prop5}
Every orientation preserving homeomorphism $f:S^n\str S^n$ is
isotopic to the identity.
\end{prop}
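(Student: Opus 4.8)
The plan is to reduce the statement to the \emph{stable homeomorphism conjecture} (Theorem~\ref{SHC}) on $\mathbf{R}^n$, transport the resulting decomposition back to $S^n$, and then let Alexander's theorem (Theorem~\ref{Alexander}) together with Lemma~\ref{compiso} finish the argument. For $n=0$ the only orientation preserving self-homeomorphism of $S^0$ is the identity, so I would assume $n\geq 1$ throughout and freely use that isotopy is an equivalence relation.

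First I would arrange that $f$ fixes a point. Fix $N\in S^n$ and choose $A\in SO(n\pl 1)$ with $A(f(N))=N$; since $SO(n\pl 1)$ is path-connected, a path $A_t$ from the identity to $A$ gives an isotopy $R_t=A_t$ of $S^n$, so that $t\mapsto R_t\circ f$ is an isotopy from $f$ to $g:=A\circ f$. This $g$ is again orientation preserving and now satisfies $g(N)=N$, so it suffices to prove $g$ isotopic to the identity.

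Next I would pass to a chart. Let $\sigma:S^n\!\setminus\!\{N\}\str\mathbf{R}^n$ be stereographic projection; since $g$ fixes $N$, the conjugate $F=\sigma\circ g\circ\sigma^{-1}$ is a well-defined homeomorphism of $\mathbf{R}^n$, and it is orientation preserving because orientation is a local notion and conjugation by the fixed homeomorphism $\sigma$ cannot change it. By Theorem~\ref{SHC}, $F$ is \emph{stable}, i.e.\ $F=G_k\circ\ldots\circ G_1$ where each $G_i$ restricts to the identity on some nonempty open $W_i\subseteq\mathbf{R}^n$. The decisive point is that each $G_i$, being a homeomorphism, is automatically proper, hence extends to a homeomorphism $H_i$ of $S^n$ fixing $N$ (regard $S^n$ as the one-point compactification of $\mathbf{R}^n$ via $\sigma$, with $N$ the point at infinity); this extension restricts to the identity on the nonempty open set $\sigma^{-1}[W_i]\subseteq S^n$. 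Since passing to the one-point compactification is functorial on proper maps, $H_k\circ\ldots\circ H_1$ is the extension of $F$, which equals $g$. Thus $g=H_k\circ\ldots\circ H_1$ is a composition of homeomorphisms of $S^n$ each of which is the identity on a nonempty open set.

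Finally, Theorem~\ref{Alexander} supplies, for each $i$, an isotopy $\Phi^i_t$ from $H_i$ to the identity, and iterating Lemma~\ref{compiso} shows $\Phi^k_t\circ\ldots\circ\Phi^1_t$ is an isotopy from $g=H_k\circ\ldots\circ H_1$ to the identity; hence $g$, and therefore $f$, is isotopic to the identity. I expect the main obstacle to be the transfer in the previous paragraph: one must verify that the \emph{individual} factors $G_i$ of the stable decomposition, and not merely their product, extend over $N$. This is resolved by the elementary observation that every homeomorphism of $\mathbf{R}^n$ is proper and therefore extends canonically over the point at infinity, carrying ``identity on an open set'' to ``identity on an open set''.
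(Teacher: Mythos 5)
Your proposal is correct and takes essentially the same route as the paper: make $f$ fix a point (you via a rotation isotopy through $SO(n\pl 1)$, the paper by composing with an auxiliary homeomorphism that is the identity on an open set), identify the complement of that point with $\mathbf{R}^n$, apply Theorem~\ref{SHC}, extend the stable factors over the removed point, and conclude with Theorem~\ref{Alexander} and Lemma~\ref{compiso}. Your explicit properness argument that each \emph{individual} factor extends to a homeomorphism of $S^n$ carefully justifies a step the paper only asserts (``If we define $h_i(p)=p$, then every $h_i:S^n\str S^n$ is a homeomorphism''), so the two proofs differ only in these cosmetic details.
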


\begin{proof} Let $p\in S^n$ and let
$g:S^n\str S^n$ be a homeomorphism whose restriction to some
non-empty open set is the identity, and such that $g(f(p))=p$. (It
is not difficult to construct such a $g$). For $h=g\circ f$ we
have that its restriction to $S^n-\{p\}$, which is homeomorphic to
$\mathbf{R}^n$, is a homeomorphism from $S^n-\{p\}$ to
$S^n-\{p\}$.

By Theorem~\ref{SHC}, we have that this restriction is equal to a
composition of homeomorphisms $h_k\circ\ldots\circ h_1$ such that
every $h_i$ restricted to some non-empty open set is the identity.
If we define $h_i(p)=p$, then every $h_i:S^n\str S^n$ is a
homeomorphism and $f=g^{-1}\circ h_k\circ\ldots\circ h_1$. Hence,
$f$ is stable. By Theorem~\ref{Alexander}, and
Lemma~\ref{compiso}, $f$ is isotopic to the identity.
\end{proof}

\begin{thm}[Invariance of Domain Theorem]\label{invariance}
If $M$ and $N$ are topological $n$-manifolds without boundaries
and $f:M\str N$ is a continuous 1-1 map, then $f$ is an open map.
\end{thm}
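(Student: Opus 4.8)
The plan is to reduce the statement to the classical invariance of domain for Euclidean space and then to establish that Euclidean case by a homological separation argument. Being an open map is a local condition, so it suffices to show that every $p\in M$ has an open neighborhood whose image is open in $N$ and contained in $f$ applied to any prescribed neighborhood of $p$. Choose a chart $\varphi:U\str\varphi[U]\subseteq\mR^n$ around $p$, with $U$ as small as we like, and a chart $\psi:W\str\psi[W]\subseteq\mR^n$ around $f(p)$ in $N$; since $M$ and $N$ have no boundary, these charts have open Euclidean image. Using continuity of $f$, shrink $U$ so that $f[U]\subseteq W$. Then $\psi\circ f\circ\varphi^{-1}$ is a continuous injection of the open set $\varphi[U]\subseteq\mR^n$ into $\mR^n$, and if it is an open map then $\psi[f[U]]$ is open in $\mR^n$, whence $f[U]=\psi^{-1}[\psi[f[U]]]$ is open in $W$ and hence in $N$. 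Thus everything reduces to the assertion: a continuous injection of an open subset of $\mR^n$ into $\mR^n$ is open.

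For the Euclidean case I would use the standard local-homology approach, embedding $\mR^n$ as $S^n$ minus a point and proving two separation facts by induction on $k$ via Mayer--Vietoris. First, for any embedding $h$ of a $k$-cell $D^k$ into $S^n$, the complement $S^n-h[D^k]$ has the reduced homology of a point; the inductive step writes $D^k=D^{k-1}\times I$, splits it into two half-cells meeting in a $(k\mn1)$-cell, and applies Mayer--Vietoris to their complements. Second, for any embedding $h$ of $S^k$ into $S^n$ one gets $\tilde H_i(S^n-h[S^k])=\mZ$ for $i=n\mn k\mn1$ and $0$ otherwise, proved by splitting $S^k$ into two hemispheres (each a $k$-cell) meeting in $S^{k-1}$ and feeding the first fact into the Mayer--Vietoris sequence. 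The case $k=n\mn1$ of the second fact is the Jordan--Brouwer separation theorem: the complement $S^n-h[S^{n-1}]$ has exactly two connected components.

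To finish, around a point $x$ of the open domain take a small closed ball $B$ inside it; its boundary sphere $\partial B$ maps to an embedded $(n\mn1)$-sphere $f[\partial B]$ in $S^n$ (the restriction of $f$ to the compact set $B$ is a homeomorphism onto its image). By Jordan--Brouwer, $S^n-f[\partial B]$ has two components, and it is the disjoint union of the connected set $f[\Int B]$ and the set $S^n-f[B]$, the latter connected because $f[B]$ is an embedded $n$-cell (first fact, $k=n$, giving $\tilde H_0=0$). Since $f[\partial B]$ is compact its complement is open in the locally connected space $S^n$, so both of these connected pieces are the two components and are therefore open; in particular $f[\Int B]$ is an open neighborhood of $f(x)$. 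As $x$ was arbitrary, $f$ is open, and the chart reduction above completes the manifold statement.

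The main obstacle is the inductive homological computation behind the two separation facts, and in particular the bisection step in the first: a class surviving in $S^n-h[D^k]$ must already be nonzero in the complement of one of the two half-cells, and iterating the bisection in the $I$-coordinate produces a nested family of sub-cells $h[D^{k-1}\times J_m]$ with $J_m\searrow\{t\}$, whose complements form a direct system with limit $S^n-h[D^{k-1}\times\{t\}]$, acyclic by the induction hypothesis. Combining the surviving-class property with continuity of homology under this direct limit yields the contradiction. Making this precise is the delicate point, especially because the embeddings here are merely topological and may be \emph{wild}, so no smooth or piecewise-linear structure is available; by contrast, the chart bookkeeping in the reduction step is entirely routine.
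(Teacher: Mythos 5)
The paper does not prove this theorem at all: it appears in Section~\ref{topological} as one of several classical background results (alongside the Annulus Conjecture and the Stable Homeomorphism Conjecture) that are simply quoted for use in Section~\ref{why}, so there is no in-paper argument to compare yours against. Your outline is the standard algebraic-topology proof (essentially Hatcher's Proposition~2B.1 and Theorem~2B.3): the chart reduction to the Euclidean case is routine and correctly handled, the two separation facts for embedded $k$-cells and $k$-spheres in $S^n$ are the right lemmas, and your endgame via Jordan--Brouwer plus local connectedness of $S^n$ is exactly how the Euclidean case is concluded. You also correctly identify the one genuinely delicate point --- the bisection argument in the acyclicity of $S^n-h[D^k]$, where a surviving class is pushed through nested sub-cells $h[D^{k-1}\times J_m]$ with $J_m\searrow\{t\}$ and killed by the fact that singular homology, having compactly supported cycles, commutes with the direct limit of the complements --- and your remark that wildness of the embedding rules out any smooth or PL shortcut is apt. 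Two small points you gloss over: you need $S^n-f[B]$ to be nonempty before calling it one of the two components (an embedded $n$-cell cannot exhaust $S^n$, e.g.\ since $D^n\not\cong S^n$ by homology), and in the final identification you should note each of the two connected pieces is nonempty so that they are forced to coincide with the two components. Neither affects the soundness of the approach; as a proof sketch of a theorem the paper treats as known, this is correct and complete in structure.
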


\begin{lem}[Pasting Lemma]\label{pasting}
For $X,Y$ both closed or both open subsets of $A=X\cup Y$, if for
$f:A\str B$ both its restrictions to $X$ and $Y$ are continuous,
then $f$ is continuous too.
\end{lem}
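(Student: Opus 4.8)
The plan is to check continuity of $f$ by testing preimages, using the single observation that since $A=X\cup Y$, for every subset $S\subseteq B$ the preimage splits as
\[
f^{-1}[S]=(f|_X)^{-1}[S]\cup (f|_Y)^{-1}[S],
\]
where $(f|_X)^{-1}[S]=f^{-1}[S]\cap X$ and symmetrically for $Y$ (this identity holds because every point of $A$ lies in $X$ or in $Y$, and on $X$ the map $f$ agrees with $f|_X$). Everything then reduces to the behaviour of the subspace topologies of $X$ and $Y$ inside $A$, and the two hypotheses (both open, or both closed) are handled by the two dual characterizations of continuity.

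First I would treat the case where $X$ and $Y$ are both open in $A$. Testing continuity against an arbitrary open $V\subseteq B$, the continuity of $f|_X$ makes $(f|_X)^{-1}[V]$ open in the subspace $X$; since $X$ is itself open in $A$, a set open in $X$ is open in $A$, and the same applies to $Y$. Hence the displayed preimage $f^{-1}[V]$ is a union of two subsets open in $A$, so it is open, and $f$ is continuous.

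For the case where $X$ and $Y$ are both closed in $A$, I would instead invoke the characterization that a map is continuous iff the preimage of every closed set is closed. Testing against an arbitrary closed $C\subseteq B$, continuity of $f|_X$ gives that $(f|_X)^{-1}[C]$ is closed in $X$, and since $X$ is closed in $A$ it is closed in $A$; likewise for $Y$. Thus $f^{-1}[C]$ is a union of two closed subsets of $A$, hence closed, so $f$ is continuous.

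There is no genuine obstacle in this argument. The only point requiring care is the elementary bookkeeping that a subset which is open (respectively closed) in the subspace $X$ is open (respectively closed) in $A$ exactly because $X$ is assumed open (respectively closed) in $A$, and symmetrically for $Y$; it is precisely the hypothesis that $X$ and $Y$ are of the \emph{same} kind that allows the two pieces to be recombined by a finite union within the single topology of $A$.
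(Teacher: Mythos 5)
Your proof is correct and complete: the preimage decomposition $f^{-1}[S]=(f|_X)^{-1}[S]\cup(f|_Y)^{-1}[S]$, combined with the fact that a set open (respectively closed) in an open (respectively closed) subspace of $A$ is open (respectively closed) in $A$, and closure of each class under finite unions, is exactly the standard argument, with the closed case correctly handled via the closed-preimage characterization of continuity. For comparison, the paper itself offers no proof of this lemma --- it is listed among the classical topological results of the appendix and stated without argument --- so your write-up simply supplies the canonical textbook proof that the authors take for granted.
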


\begin{prop}\label{prop8}
If $\Phi_t:S^n\str S^n$ is an isotopy from the identity to $f$,
then $F:S^n\times I\str S^n\times I$ defined by
$F(x,t)=(\Phi_t(x),t)$ is a homeomorphism.
\end{prop}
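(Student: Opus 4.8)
The plan is to verify in turn that $F$ is continuous and bijective, and then to obtain continuity of its inverse for free by a compactness argument, rather than by analysing the joint continuity of $t\mapsto\Phi_t^{-1}$ directly.

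First I would note that $F$ is continuous. Its two coordinate functions are $(x,t)\mapsto\Phi_t(x)$ and $(x,t)\mapsto t$. The first is exactly the homotopy $\Phi:S^n\times I\str S^n$ underlying the isotopy (by definition an isotopy is a homotopy all of whose slices are homeomorphisms, so $\Phi(x,t)=\Phi_t(x)$ is continuous), and the second is the projection onto $I$. Hence $F=\langle\Phi,\mathrm{pr}_I\rangle:S^n\times I\str S^n\times I$ is continuous by the universal property of the product topology. Next I would check that $F$ is a bijection. Since each $\Phi_t$ is a homeomorphism of $S^n$, it is in particular a bijection, and since $F$ preserves the second coordinate, injectivity and surjectivity follow slicewise: if $F(x,t)=F(x',t')$ then $t=t'$ and $\Phi_t(x)=\Phi_t(x')$, whence $x=x'$; and any $(y,t)$ is hit by $(\Phi_t^{-1}(y),t)$. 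Thus the set-theoretic inverse of $F$ is $(y,t)\mapsto(\Phi_t^{-1}(y),t)$.

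The only remaining point is continuity of $F^{-1}$, and this is the step I expect to be the real content. The naive route would be to argue that $t\mapsto\Phi_t^{-1}$ varies continuously, which is awkward to make precise. Instead I would invoke compactness: $S^n$ is compact and $I=[0,1]$ is compact, so the product $S^n\times I$ is compact, while it is also Hausdorff (being a product of metric spaces, or a manifold). A continuous bijection from a compact space to a Hausdorff space is automatically a homeomorphism, because it carries closed (hence compact) sets to compact (hence closed) sets, so it is a closed map and its inverse is continuous. Applying this to the continuous bijection $F$ gives the conclusion.

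The main obstacle is therefore entirely sidestepped: one never has to establish joint continuity of the inverse slices $\Phi_t^{-1}$ by hand, since the compact-to-Hausdorff principle delivers continuity of $F^{-1}$ at once. If one preferred a hands-on argument avoiding compactness, the alternative would be to build $F^{-1}$ from the continuous map $(y,t)\mapsto\Phi_t^{-1}(y)$ and verify its continuity via the Pasting Lemma, but the compactness argument is shorter and cleaner and is the approach I would adopt.
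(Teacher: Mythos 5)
Your proof is correct, but it takes a genuinely different route from the paper's. The paper never invokes compactness: instead it extends $F$ to a map $G:S^n\times\mathbf{R}\str S^n\times\mathbf{R}$ (equal to the identity for $t\leq 0$, to $F$ on $S^n\times[0,1]$, and to $(x,t)\mapsto(f(x),t)$ for $t\geq 1$), checks that $G$ is continuous by the Pasting Lemma (Lemma~\ref{pasting}) and injective, and then applies the Invariance of Domain Theorem (Theorem~\ref{invariance}) to the boundaryless $(n\pl 1)$-manifold $S^n\times\mathbf{R}$ to conclude that $G$, hence $F$, is an open map, so that $F^{-1}$ is continuous. Your compact-to-Hausdorff argument is shorter and more elementary: since $S^n\times I$ is compact and Hausdorff, a continuous bijection of it onto itself is automatically closed, hence a homeomorphism, and all of the machinery of Theorem~\ref{invariance} is sidestepped. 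What the paper's heavier argument buys is independence from compactness of the underlying manifold---the same extension-plus-invariance-of-domain device works verbatim for isotopies of a non-compact manifold without boundary, where your argument would fail---and it reuses tools (the Pasting Lemma and Invariance of Domain) that the paper has already set up in Section~\ref{topological}. For the statement as given, with $S^n$ compact, your proof is complete and arguably the cleaner one.
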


\begin{proof} We have that $F$ is
continuous and that $F^{-1}$ defined by
$F^{-1}(x,t)=(\Phi^{-1}_t(x),t)$ is its inverse. It remains to
prove that $F^{-1}$ is continuous.

Let $G:S^n\times \mathbf{R}\str S^n\times \mathbf{R}$ be defined
by
\[
G(x,t)=\left\{
\begin{array}{ll}
(x,t), & (x,t)\in S^n\times (-\infty,0],
\\
F(x,t), & (x,t)\in S^n\times [0,1],
\\
(f(x),t), & (x,t)\in S^n\times [1,+\infty).
\end{array}
\right.
\]
We have that $G$ is 1-1 and by Lemma~\ref{pasting} it is
continuous. The $(n+1)$-manifold $S^n\times \mathbf{R}$ is without
boundary, and by Theorem~\ref{invariance}, we have that $G$ is
open. Hence, $F$ is open, which means that $F^{-1}$ is continuous.
\end{proof}

As a corollary of Propositions \ref{prop5} and \ref{prop8}, we
have the following.

\begin{prop}\label{prop9}
If $f:S^n\str S^n$ is an orientation preserving homeomorphism,
then there exists a homeomorphism $F:S^n\times I\str S^n\times I$
such that $F(x,0)=(x,0)$ and $F(x,1)=(f(x),1)$.
\end{prop}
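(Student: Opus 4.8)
The plan is to chain together the two preceding propositions. First I would apply Proposition~\ref{prop5} to the given orientation preserving homeomorphism $f:S^n\str S^n$, obtaining that $f$ is isotopic to the identity. By the symmetry of the isotopy relation---reparametrising by $t\mapsto 1-t$ keeps each level map a homeomorphism, and the reversed homotopy is still continuous---this furnishes an isotopy $\Phi_t:S^n\str S^n$ with $\Phi_0$ equal to the identity and $\Phi_1=f$.

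Next I would invoke Proposition~\ref{prop8}, whose hypothesis is precisely an isotopy from the identity to $f$. It asserts that the map $F:S^n\times I\str S^n\times I$ defined by $F(x,t)=(\Phi_t(x),t)$ is a homeomorphism, and this is the $F$ we are after. To finish, I would verify the two boundary conditions by direct substitution: $F(x,0)=(\Phi_0(x),0)=(x,0)$ since $\Phi_0$ is the identity, and $F(x,1)=(\Phi_1(x),1)=(f(x),1)$ since $\Phi_1=f$.

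Since the statement is an immediate corollary of Propositions~\ref{prop5} and~\ref{prop8}, there is essentially no obstacle to overcome. The single point deserving a moment's attention is matching the direction of the isotopy: Proposition~\ref{prop5} delivers an isotopy running \emph{to} the identity, whereas Proposition~\ref{prop8} is phrased for one running \emph{from} the identity to $f$. The reparametrisation noted above bridges this gap without difficulty, so the entire argument amounts to assembling the already-established pieces.
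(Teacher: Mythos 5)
Your proposal is correct and matches the paper exactly: the paper derives Proposition~\ref{prop9} precisely as a corollary of Propositions~\ref{prop5} and~\ref{prop8}, with no further argument given. Your additional remark on reversing the isotopy via $t\mapsto 1-t$ to match the direction required by Proposition~\ref{prop8} is a valid and careful filling-in of the one implicit step.
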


\subsection{The equational system
$\mathcal{K}$}\label{appendixK}

To define the arrows of $\mathbf{K}$, we need an equational
system, denoted by $\mathcal{K}$. We start with an inductive
definition of \emph{terms}.
\begin{itemize}
\item[1.] For $n,m\in\omega$, the words $\mj_n:n\str n$,
$\tau_{n,m}:n+m\str m+n$, \\ $\md:2\str 1$, $\ed:0\str 1$,
$\ms:1\str 2$, $\es:1\str 0$, are \emph{primitive} terms.

\item[2.] If $f:n\str m$ and $g:m\str p$ are terms, then $(g\circ
f):n\str p$ is a term.

\item[3.] If $f_1:n_1\str m_1$ and $f_2:n_2\str m_2$ are terms,
then \\ $(f_1\otm f_2):n_1+n_2\str m_1+m_2$ is a term.

\item[4.] Nothing else is a term.
\end{itemize}

A \emph{type} is a word of the form $n\str m$, where
$n,m\in\omega$. We say that $n\str m$ is a type of a term $f:n\str
m$, and we say that this term has $n$ as the \emph{source} and $m$
as the \emph{target}. Usually, we omit the type in writing a term
and by a term we mean just its part before the colon symbol. Also,
we omit the outermost parenthesis in terms.

The \emph{language} of $\mathcal{K}$ consists of words of the form
$f=g$, where $f$ and $g$ are terms with the same type. Besides
$f=f$, the \emph{axiom schemata} of $\mathcal{K}$ are the
following.
\begin{equation}
  \tag{\emph{str}}
  f\otimes \mj_0=f=\mj_0\otimes f,\quad (f_1\otimes f_2)\otimes f_3=f_1\otimes(f_2\otimes f_3).\label{st}
\end{equation}
For $f:n\str m$, $g:m\str p$ and $h:p\str q$,

\vspace{-.5ex}
\begin{equation}
  \tag{\emph{cat}}
  f\circ \mj_n=f=\mj_m\circ f,\quad (h\circ g)\circ f=h\circ(g\circ f).\label{ct}
\end{equation}

\begin{equation}
  \tag{\emph{fun}}
  \mj_n\otimes\mj_m=\mj_{n+m},\quad (g_1\circ f_1)\otimes(g_2\circ f_2)=(g_1\otimes g_2)\circ (f_1\otimes f_2).
  \label{fn}
\end{equation}

\begin{equation}
  \tag{\emph{nat}}
  \tau_{m_1,m_2}\circ(f_1\otimes f_2)=(f_2\otimes f_1)\circ\tau_{n_1,n_2}.
  \label{nt}
\end{equation}

\begin{equation}
  \tag{\emph{inv}}
  \tau_{m,n}\circ\tau_{n,m}=\mj_{n+m}.
  \label{iv}
\end{equation}

\begin{equation}
  \tag{\emph{hex}}
  \tau_{n+m,p}=(\tau_{n,p}\otimes \mj_m)\circ(\mj_n\otimes\tau_{m,p}).
  \label{hx}
\end{equation}

\begin{equation}
  \tag{\emph{assoc}}
  \md\circ(\md\otimes\mj_1)=\md\circ(\mj_1\otimes\md).
  \label{as}
\end{equation}

\begin{equation}
  \tag{\emph{unit}}
  \md\circ(\ed\otimes\mj_1)=\mj_1=\md\circ(\mj_1\otimes\ed).
  \label{un}
\end{equation}

\begin{equation}
  \tag{\emph{coass}}
  (\ms\otimes\mj_1)\circ\ms=(\mj_1\otimes\ms)\circ\ms.
  \label{ca}
\end{equation}

\begin{equation}
  \tag{\emph{counit}}
  (\es\otimes\mj_1)\circ\ms=\mj_1=(\mj_1\otimes\es)\circ\ms.
  \label{cu}
\end{equation}

\begin{equation}
  \tag{\emph{Frob}}
  (\md\otimes\mj_1)\circ(\mj_1\otimes\ms)=\ms\circ\md=(\mj_1\otimes\md)\circ(\ms\otimes\mj_1).
  \label{fb}
\end{equation}

\begin{equation}
  \tag{\emph{com}}
  \md\circ\tau_{1,1}=\md.
  \label{cm}
\end{equation}

\begin{equation}
  \tag{\emph{cocom}}
  \tau_{1,1}\circ\ms=\ms.
  \label{cocm}
\end{equation}

\vspace{2ex}

The \emph{inference figures} of $\mathcal{K}$ are the following.
\[
\f{f=g}{g=f}\quad\quad \f{f=g\quad g=h}{f=h}
\]

\[
\f{f_1:n\str m=f_2:n\str m\quad\quad g_1:m\str p=g_2:m\str
p}{g_1\circ f_1=g_2\circ f_2}
\]

\[
\f{f_1=g_1\quad\quad f_2=g_2}{f_1\otimes f_2=g_1\otimes g_2}
\]

We say that terms $f$ and $g$ are \emph{equal}, when $f=g$ is a
theorem of $\mathcal{K}$, and we denote this by $f\equiv g$. The
relation $\equiv$ is an equivalence relation and $[f]$ is the
equivalence class of a term~$f$.


\begin{thebibliography}{99}

\bibitem{A96} {\sc L.\ Abrams}, {\it Two-dimensional topological quantum field theories
and Frobenius algebras}, \textbf{\textit{Journal of Knot Theory
and its Ramifications}}, vol.\ 5 (1996) pp.\ 569-587

\bibitem{B37} {\sc R.\ Brauer}, {\it On algebras which are connected with semisimple continuous groups},
\textbf{\textit{Annals of Mathematics}}, vol.\ 38 (1937) pp.\
857-872

\bibitem{B67} {\sc W.\ Browder}, {\it Diffeomorphisms of 1-connected manifolds}, \textbf{\textit{Transactions of the
American Mathematical Society}}, vol.\ 128 (1967) pp.\ 155-163

\bibitem{BG64} {\sc M.\ Brown} and {\sc H.\ Gluck}, {\it Stable structures on
manifolds. II. Stable manifolds}, \textbf{\textit{Annals of
Mathematics}}, Second Series, vol.\ 79 (1964) pp.\ 18-44

\bibitem{C70} {\sc J.\ Cerf}, {\it The pseudo-isotopy theorem for simply connected
differentiable manifolds}, \textbf{\textit{Manifolds-Amsterdam
1970}} (N.\ Kuiper, editor), Lecture Notes in Mathematics, vol.\
197, Springer, Berlin, 1970, pp.\ 76-82

\bibitem{D89} {\sc R.H.\ Dijkgraaf}, \textbf{\textit{A Geometric Approach To Two-Dimensional Conformal Field Theory}}, PhD thesis, University of Utrecht, 1989


\bibitem{D72} {\sc A.\ Dold}, \textbf{\textit{Lectures in Algebraic Topology}}, Springer, Berlin, 1972

\bibitem{DP03} {\sc K. Do\v sen and Z. Petri\' c}, {\it A Brauerian representation of split preorders},
\textbf{\textit{Mathematical Logic Quarterly}}, vol.\ 49, (2003),
pp.\ 579-586

\bibitem{DP003} --------, {\it Generality of proofs and its Brauerian representation},
\textbf{\textit{The Journal of Symbolic Logic}}, vol.\ 68, (2003),
pp.\ 740-750

\bibitem{DP06} --------, {\it Symmetric self-adjunctions:
A justification of Brauer's representation of Brauer's algebras},
\textbf{\textit{Proceedings of the Conference ``Contemporary
Geometry and Related Topics''}} (N.\ Bokan et al., editors),
Faculty of Mathematics, Belgrade, 2006, pp. 177-187

\bibitem{DP12} --------,
{\it Symmetric self-adjunctions and matrices},
\textbf{\textit{Algebra Colloquium}}, vol.\ 19, No. spec01 (2012)
pp.\ 1051-1082

\bibitem{KM63} {\sc M.\ Kervaire} and {\sc J.\ Milnor}, {\it Groups of homotopy spheres. I}, \textbf{\textit{Annals of
Mathematics}}, Second Series, vol.\ 77 (1963) pp.\ 504-537

\bibitem{K69} {\sc R.C.\ Kirby}, {\it Stable homeomorphisms and the annulus
conjecture}, \textbf{\textit{Annals of Mathematics}}, Second
Series, vol.\ 89 (1969) pp.\ 575-582

\bibitem{K03} {\sc J.\ Kock}, \textbf{\textit{Frobenius Algebras and 2D
Topological Quantum Field Theories}}, Cambridge University Press,
Cambridge, 2003

\bibitem{K13} {\sc M.\ Kreck}, {\it Orientation of manifolds}, \textbf{\textit{Manifold Atlas}},
\url{http://www.map.mpim-bonn.mpg.de/Orientation_of_manifolds},
2013

\bibitem{L62} {\sc W.B.R.\ Lickorish}, {\it A representation of orientable combinatorial 3-manifolds},
\textbf{\textit{Annals of Mathematics}}, Second Series, vol.\ 76
(1962) pp.\ 531-540

\bibitem{ML63} {\sc S.\ Mac Lane}, {\it Natural associativity and
commutativity}, \textbf{\textit{Rice University Studies, Papers in
Mathematics}}, vol.\ 49 (1963) pp.\ 28-46

\bibitem{ML65} --------, {\it Categorical algebra},
\textbf{\textit{Bulletin of the American Mathematical Society}},
vol.\ 71 (1965) pp.\ 40-106

\bibitem{ML71} --------, \textbf{\textit{Categories for the Working
Mathematician}}, Springer, Berlin, 1971 (expanded second edition,
1998)

\bibitem{M52} {\sc E.E.\ Moise}, {\it Affine structures in 3-manifolds. V. The
triangulation theorem and Hauptvermutung}, \textbf{\textit{Annals
of Mathematics}}, Second Series, vol.\ 56 (1952) pp.\ 96-114

\bibitem{PT17} {\sc Z.\ Petri\' c and S.\ Telebakovi\' c}, {\it A faithful 2-dimensional TQFT},
preprint, available at ArXiv (2017)

\bibitem{Q82} {\sc F.\ Quinn}, {\it Ends of maps. III. Dimensions 4 and 5},
\textbf{\textit{Journal of Differential Geometry}}, vol.\ 17
(1982) pp.\ 503-521

\bibitem{Q95} --------, {\it Lectures on axiomatic topological quantum field
theory}, \textbf{\textit{Geometry and Quantum Field Theory}}
(D.S.\ Freed and K.K.\ Uhlenbeck, editors), American Mathematical
Society, Providence, 1995, pp.\ 323-453

\bibitem{R24} {\sc T.\ Rad\' o} , {\it \" Uber den Begriff der Riemannschen Fl\" ache},
\textbf{\textit{Acta Universitatis Szegediensis}}, vol.\ 2 (1924)
pp.\ 101-121

\bibitem{S95} {\sc S.\ Sawin} , {\it Direct sum decompositions and indecomposable TQFTs},
\textbf{\textit{Journal of Mathematical Physics}}, vol.\ 36 (1995)
pp.\ 6673-6680

\bibitem{ST80} {\sc H.\ Seifert and W.\ Threlfall}, \textbf{\textit{A textbook of
topology}}, Pure and Applied Mathematics, 89, Academic Press, New
York, 1980 (English translation of 1934 classic German textbook)

\bibitem{T08} {\sc L.W.\ Tu}, \textbf{\textit{An Introduction to Manifolds}}, Springer, New York,
2008

\end{thebibliography}
\end{document}